\theoremstyle{definition}
\newtheorem*{notation*}{Notation}
\theoremstyle{plain}
\newtheorem{theorem}{Theorem}[section]
\newtheorem*{theorem*}{Theorem}
\newtheorem{definition}[theorem]{Definition}
\newtheorem{lemma}[theorem]{Lemma}
\newtheorem{prop}[theorem]{Proposition}
\newtheorem*{prop*}{Proposition}
\newtheorem{cor}[theorem]{Corollary}
\newtheorem*{cor*}{Corollary}
\theoremstyle{definition}
\newtheorem{rem}[theorem]{Remark}
\newtheorem{ex}[theorem]{Example}
\newtheorem*{mt*}{Main Theorem}
\newtheorem*{question}{Question}
\newtheorem*{acknowledgements}{Acknowledgements}
\DeclareMathOperator{\Span}{Span}
\DeclareMathOperator{\Ker}{Ker}
\DeclareMathOperator{\im}{Im}
\DeclareMathSymbol{\Finv} {\mathord}{AMSb}{"60}
\newcommand\restrict[1]{\raisebox{-.5ex}{$|$}_{#1}}
\newcommand{\R}{\mathbb{R}}
\newcommand{\C}{\mathbb{C}}
\newcommand{\Z}{\mathbb{Z}}
\newcommand{\q}{\mathbb{Q}}
\newcommand{\del}{\partial}
\newcommand{\delbar}{\overline{\partial}}
\numberwithin{equation}{section}
\DeclareFontFamily{U}{MnSymbolC}{}
\DeclareSymbolFont{MnSyC}{U}{MnSymbolC}{m}{n}
\DeclareFontShape{U}{MnSymbolC}{m}{n}{
    <-6>  MnSymbolC5
   <6-7>  MnSymbolC6
   <7-8>  MnSymbolC7
   <8-9>  MnSymbolC8
   <9-10> MnSymbolC9
  <10-12> MnSymbolC10
  <12->   MnSymbolC12}{}
\DeclareMathSymbol{\intprod}{\mathbin}{MnSyC}{'270}
\author{Tommaso Sferruzza}
\address[Tommaso Sferruzza]{
Dipartimento di Scienze Matematiche, Fisiche e Informatiche\\
Unità di Mate\-matica e Informatica\\
Università degli studi di Parma}
\email{tommaso.sferruzza@unipr.it}
\author{Adriano Tomassini}
\address[Adriano Tomassini]
{Dipartimento di Scienze Matematiche, Fisiche e Informatiche\\
Unità di Mate\-matica e Informatica\\
Università degli studi di Parma}
\email{adriano.tomassini@unipr.it}
\title[Bott-Chern formality and Massey products\dots]{Bott-Chern formality and Massey products on strong K\"ahler with torsion and K\"ahler solvmanifolds}
\keywords{SKT metric; ABC-Massey product; geometrically formal; nilmanifold; solvmanifold.}
\thanks{The authors have been supported by GNSAGA of INdAM. The first author has been supported by Ludwig-Maximilians-Univers\"at M\"unchen during his research visit. The second author has been supported by the Project PRIN: Real and Complex Manifolds: Geometry and Holomorphic Dynamics 2022AP8HZ9}
\subjclass[2010]{53C55; 53B35; 22E25}
\date{\today}
\begin{document}
\begin{abstract}
We study the interplay between geometrically-Bott-Chern-formal metrics and SKT metrics. We prove that a $6$-dimensional nilmanifold endowed with a invariant complex structure admits an SKT metric if and only if it is geometrically-Bott-Chern-formal. We also provide some partial results in higher dimensions for nilmanifolds endowed with a class of suitable complex structures. Furthermore, we prove that any K\"ahler solvmanifold is geometrically formal. Finally, we explicitly construct lattices for a complex solvable Lie group in the list of Nakamura \cite{Nak} on which we provide a non vanishing quadruple $ABC$-Massey product.
\end{abstract}

\maketitle

\section{Introduction}

Let $(M,J)$ be a compact complex manifold of real dimension $2n$. Dolbeault, Bott-Chern, and Aeppli cohomologies play a relevant role in the study of the holomorphic invariants and provide obstructions for the existence of further structures. For example, the existence of a K\"ahler metric on $(M,J)$ implies that the $(p,p)$-Hodge numbers are positive, the $\del\delbar$-Lemma holds, the de Rham complex is a formal DGA \cite{Sul75,Sul77,DGMS} and $M$ satisfies the Hard Lefschetz condition.
When $(M,J)$ admits a K\"ahler metric, all these cohomology groups are isomorphic. This fact is no longer true for compact non K\"ahler manifolds; in such a case, Bott-Chern cohomology may yield further information on the complex geometry of the manifold.

Compact quotiens of simply connected nilpotent Lie groups by a lattice, namely \emph{nilmanifolds}, endowed with an invariant complex structure, are one of the main sources of concrete examples on which explicit cohomological computations can be carried out. Due to Benson and Gordon \cite{BG}, such manifolds never admit a K\"aher structure unless they are tori. Nevertheless, nilmanifolds can admit special Hermitian metrics, e.g., \emph{strong K\"ahler with torsion metrics}, shortly \emph{SKT metrics} (see \cite{AG,CG,FT09S, GHR,Gua,Hit} for general results on SKT geometry and generalized K\"ahler geometry and \cite{FPS,RosTom,FGV,FT09,FV} for existence results on nilmanifolds).

Bott-Chern cohomology provides an obstruction to the existence of curves of SKT metrics starting from a SKT metric (see \cite{PS}). Another relation between Bott-Chern cohomology and metric properties is given by the notion of \emph{Bott-Chern formality}. More precisely, following Kotschick \cite{Kot}, in \cite{AngTom} the notion of \emph{geometrically-Bott-Chern-formal metrics}, shortly \emph{geometrically-$BC$-formal metrics}, is defined. In the same work, as an obstruction to the existence of geometrically-$BC$-formal metrics, the authors introduced the \emph{triple Aeppli-Bott-Chern-Massey products}, shortly \emph{triple ABC-Massey products}; such products are defined by three cohomology Bott-Chern classes satisfying suitable conditions which produce a coholomogy class living in a quotient of Aeppli cohomology modulo an ideal of indeterminacy.
Recently, Milivojevic and Stelzig \cite{MS} introduced the \emph{n-fold Aeppli-Bott-Chern-Massey products}, generalizing the triple $ABC$-Massey products in \cite{AngTom}.

In the first part of the present paper, we investigate possible further interplays between SKT metrics and geometrically-$BC$-formal metrics on nilmanifolds endowed with an invariant complex structure. We prove the following.
\begin{theorem*}[see Theorem \ref{thm:SKT-geomBC-dim3}]
Let $(\Gamma\backslash G:=M,J)$ be a $6$-dimensional nilmanifold endowed with an invariant complex structure $J$. Then, $(M,J)$ is SKT if, and only if, it is geometrically-$BC$-formal. In particular, every Hermitian invariant metric is SKT if, and only if, it is geometrically-$BC$-formal.
\end{theorem*}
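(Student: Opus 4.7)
The plan is to reduce both conditions to the Lie algebra level and then proceed by a case-by-case analysis. First, on any nilmanifold $(\Gamma\backslash G, J)$ with $J$ invariant, the inclusion of left-invariant forms is a quasi-isomorphism for Bott--Chern and Aeppli cohomology (in the range of complex structures relevant here), and the Bott--Chern Laplacian of an invariant Hermitian metric preserves invariance. By a symmetrisation argument, both the SKT property and the geometrically-$BC$-formal property on $(M,J)$ are then equivalent to the existence of an invariant representative. It therefore suffices to prove the stronger ``in particular'' statement pointwise on the space of invariant Hermitian inner products.

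Second, I would invoke the classification (due to Salamon and Cordero--Fern\'andez--Gray--Ugarte) of the pairs $(\mathfrak{g}, J)$ for a real $6$-dimensional nilpotent Lie algebra $\mathfrak{g}$, together with the characterisation by Fino--Parton--Salamon (and Ugarte) of the sublist admitting an invariant SKT metric. On each representative I would parametrise a general invariant $(1,1)$-form $\omega$ in a fixed basis of $(1,0)$-forms, and use the nilpotent structure equations to compute the space of Bott--Chern harmonic forms $\ker\partial \cap \ker\bar\partial \cap \ker(\partial\bar\partial)^*$ in every bidegree. The geometrically-$BC$-formal condition then amounts to the closure of this finite-dimensional graded algebra under the wedge product, which translates into a finite list of polynomial identities in the coefficients of $\omega$.

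On an SKT algebra, I would check that these identities all follow once $\partial\bar\partial\omega = 0$, by verifying closure of a distinguished basis of harmonic forms under products directly from the structure equations; on the remaining cases (either a non-SKT algebra, or a non-SKT invariant metric on an SKT algebra) I would exhibit an explicit pair of invariant Bott--Chern harmonic forms whose wedge product fails one of the three harmonicity conditions, and trace the obstruction back to a non-vanishing component of $\partial\bar\partial\omega$. The main obstacle will be the explicit computation of $\partial^*$ and $\bar\partial^*$ for a \emph{general} invariant Hermitian metric, since these operators depend non-trivially on the metric coefficients. Pinning down the right obstructing product in the non-SKT cases and organising the verification uniformly across the finite list of models, while maintaining the strength needed for the pointwise equivalence rather than only for existence, is the delicate technical point.
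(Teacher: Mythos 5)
Your plan for the forward implication (SKT $\Rightarrow$ geometrically-$BC$-formal) is essentially the paper's: symmetrise to get an invariant SKT metric, invoke the Fino--Parton--Salamon normal form (which shows the SKT condition is metric-independent, so \emph{every} invariant metric is SKT), and then verify closure of the invariant Bott--Chern harmonic forms under wedge; the paper delegates this last step to an earlier result of the authors. The organisation via the full classification of pairs $(\mathfrak{g},J)$ rather than via Salamon's two normal forms $(I)$ and $(II)$ for the coframe is a difference of bookkeeping, not of substance.

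The converse direction, however, has a genuine gap. Geometric $BC$-formality is a property quantified over \emph{all} Hermitian metrics on $(M,J)$, and there is no symmetrisation argument reducing it to invariant metrics: symmetrising a metric does not relate the harmonic forms of the original metric to those of the symmetrised one, and the closure-under-wedge property is not preserved. So your plan --- exhibit, for each non-SKT model and each invariant metric, a pair of invariant Bott--Chern harmonic forms whose product fails harmonicity --- only rules out \emph{invariant} geometrically-$BC$-formal metrics. (The paper itself makes exactly this point after its Proposition on special-type structures: the invariant computation ``does not suffice to show that there exist no (not necessarily invariant) geometrically-$BC$-formal metrics.'') The paper closes this gap by producing, in each non-SKT case, a non-vanishing triple $ABC$-Massey product: this is a purely cohomological, metric-independent obstruction to geometric $BC$-formality, and a separate lemma (symmetrisation applied to a hypothetical primitive, not to a metric) shows that a non-vanishing invariant $ABC$-Massey product on $\mathfrak{g}$ remains non-vanishing on $M$. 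To repair your argument you would need to replace the harmonic-form computation in the non-SKT cases by such a metric-independent obstruction, or otherwise control harmonic forms for arbitrary non-invariant metrics, which your own discussion of $\partial^*$ and $\bar\partial^*$ suggests you have not done.
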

For $n>3$, there exists SKT nilmanifolds with no geometrically-$BC$-formal metrics (see Proposition \ref{prop:no_sp_type_SKT}). Consequently, for $n>3$, it is natural to look for special classes of complex structures, in order to extend Theorem \ref{thm:SKT-geomBC-dim3}. In Proposition \ref{prop:geomBC_SKT} we prove that, under suitable assumptions on the complex structure, see \eqref{eq:struct_eq}, if the nilmanifold has an invariant geometrically-$BC$-formal metric, then it also admits an SKT metric. This is a partial converse of \cite[Theorem 7.4]{SfeTom2}.

The second part of the paper is devoted to the investigation of cohomological properties of solvmanifolds endowed with an invariant complex structure. By a \emph{solvmanifold}, we mean a compact quotient of a simply connected solvable Lie group by a lattice. In \cite{Hase}, Hasegawa fully characterized solvmanifolds admitting a K\"ahler structure. Namely, he proved that solvmanifolds admit a K\"ahler structure if and only if they are finite quotients of a tori with a structure of a holomorphic bundle over a torus with fiber a torus. Following the explicit description by Hasegawa, we prove
\begin{theorem*}[see Theorem \ref{thm:formality_KS} and Corollary \ref{cor:ABC_prod_KS}]
Let $X$ be any K\"ahler solvmanifold. Then, $X$ is geometrically-$BC$-formal and, consequently, every $ABC$-Massey on $X$ vanishes.
\end{theorem*}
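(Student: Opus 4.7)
The plan is to apply Hasegawa's structure theorem to reduce to the case of a finite quotient of a complex torus equipped with a flat K\"ahler metric, where the harmonic forms admit an explicit description as translation-invariant forms and hence form a subalgebra.

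By Hasegawa's classification, any K\"ahler solvmanifold $X$ is biholomorphic to a finite quotient $T/G$, where $T$ is a complex torus equipped with a $G$-invariant flat K\"ahler metric and $G$ is a finite group acting freely by holomorphic isometries. This metric descends to a K\"ahler metric on $X$, and the projection $\pi\colon T\to X$ induces isomorphisms $H^{\bullet}_{\sharp}(X)\cong H^{\bullet}_{\sharp}(T)^{G}$ for $\sharp\in\{dR,\bar\partial,BC,A\}$, identifying harmonic representatives on $X$ with $G$-invariant harmonic forms on $T$.

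On $T$ with its flat metric the space of de Rham harmonic forms coincides with the algebra $\Lambda^{\bullet}\mathfrak{t}^{*}$ of translation-invariant forms, which is manifestly closed under wedge; since $G$ preserves this subalgebra, the induced metric on $X$ is geometrically formal. To upgrade this to geometric-$BC$-formality I would use that the K\"ahler identities force the space $\mathcal{H}^{\bullet,\bullet}_{BC}(X)$ of Bott-Chern harmonic forms to coincide with the pure-type components of de Rham harmonic forms. Because the bidegree decomposition is multiplicative, the wedge of two $BC$-harmonic forms is again $BC$-harmonic, so $X$ is geometrically-$BC$-formal.

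The corollary on $ABC$-Massey products then follows by a standard Kotschick-type argument. On the K\"ahler manifold $X$ all four cohomologies and their harmonic representatives coincide, so choosing harmonic representatives $\alpha_{i}$ for the input Bott-Chern classes, every successive wedge product is itself harmonic and, whenever it represents the zero class in Aeppli cohomology, vanishes as a form by uniqueness of the harmonic representative. The primitives required in the Massey construction can therefore be taken to be zero at every stage, and every $ABC$-Massey product on $X$ vanishes identically. The main technical point is the initial descent: one must justify that the harmonic theory on $X$ is exactly the $G$-invariant harmonic theory on $T$, which is the standard consequence of free, isometric finite quotients, but requires Hasegawa's explicit description to become applicable.
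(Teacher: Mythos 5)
Your proposal is correct, but it takes a genuinely different route from the paper. The paper works with Hasegawa's \emph{explicit} model $G=\C^l\rtimes_\phi\C^k$, writes down the invariant coframe $\{\varphi^1,\dots,\varphi^{l+k}\}$ and its structure equations, verifies that $X$ is a splitting-type solvmanifold, and then invokes the Angella--Kasuya machinery to identify all the harmonic spaces with an explicit finite-dimensional subcomplex $B^{\bullet,\bullet}$ of invariant forms on which $d\equiv 0$ and which is closed under $\wedge$, conjugation and $\ast_g$ (Lemma \ref{lemma:complex_B_Gamma}); geometric formality for all four Laplacians then follows, and the vanishing of $ABC$-Massey products is deduced via the weak/strong formality results of Milivojevic--Stelzig. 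You instead use only the coarser form of Hasegawa's theorem ($X$ is a free finite quotient $T/G$ of a complex torus), average a flat K\"ahler metric over the finite group of affine biholomorphisms, and transfer the harmonic theory along the finite covering $\pi\colon T\to X$; since $\pi^*$ commutes with $\del$, $\delbar$ and $\ast$ for the descended metric, the harmonic forms on $X$ are the $G$-invariant translation-invariant forms on $T$, manifestly an algebra, and the K\"ahler identities give $\ker\Delta_{BC}^{p,q}=\ker\Delta\cap\mathcal{A}^{p,q}$ so that $BC$-formality follows. Your direct argument for the corollary (all defining primitives in the Massey construction can be taken to be zero because a harmonic representative of the zero class is the zero form) is also sound and replaces the citation of \cite[Proposition 4.4]{MS}. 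What your approach buys is brevity and independence from the splitting-type cohomology computations; what it loses is the explicit description of the cohomology and Hodge numbers of $X$ (Theorem \ref{thm:cohom_KS}), which the paper obtains as a byproduct and which is of independent interest. The only points you should make fully explicit are (i) that the finite group acts freely (this follows since $\pi_1(X)=\Gamma$ is torsion-free, so the deck group of $T\to X$ acts without fixed points) and by \emph{affine} biholomorphisms, so that averaging preserves flatness and K\"ahlerianity, and (ii) that the transfer argument applies verbatim to the Bott--Chern and Aeppli Laplacians, not just to $\Delta$ and $\Delta_{\delbar}$; both are routine.
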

Therefore, the vanishing of $ABC$-Massey products provides an obstruction to the K\"ahlerianity of solvmanifolds. It is useful to observe that in \cite{SfeTom1}, the authors construct an explicit example of a compact complex manifold satisfying the $\del\delbar$-lemma and admitting a triple non vanishing $ABC$-Massey product. As already asked by Milivojevic and Stelzig in \cite{MS}, it is natural to rise the following
\begin{question}
Are $ABC$-Massey products an obstruction to the existence of a K\"ahler metric on a given compact complex manifold?
\end{question}
Concerning the explicit construction of higher $ABC$-Massey products, we provide a non vanishing quadruple $ABC$-Massey product on a complex solvmanifold of complex dimension $4$.

The work is organized as follows. In Section \ref{sec:2} we start by fixing the notation and recalling the basic facts on $ABC$-Massey products.  Section \ref{sec:3} is devoted to the proof of Theorem \ref{thm:SKT-geomBC-dim3}. The argument relies on the characterization, due to Salamon \cite{Sal}, of global frames for which $J$ can be written in a convenient way. In Section \ref{sec:4}, complex structures of \emph{special type} on nilmanifolds are introduced. For such complex structures, Lemma \ref{lemma:BCcohom} gives a concrete description of their Bott-Chern cohomology and Theorem \ref{thm:obstruction} gives a sufficient condition for the existence of a non vanishing $ABC$-Massey product on nilmanifolds endowed with complex structures of special type. Theorem \ref{thm:obstruction} is then applied explicitly in Example \ref{example:1} and Example \ref{example:2}.
Section \ref{sec:5} contains the proof of Theorem \ref{thm:formality_KS}. Such a result follows from the full description of the cohomology of K\"ahler solvmanifolds proved in Theorem \ref{thm:cohom_KS}. Finally, in Section \ref{sec:6} we construct a non vanishing quadruple $ABC$-Massey product of the quotient of a complex Lie group of complex dimension $4$ in the list of Nakamura \cite[IV 6., p. 108]{Nak}. We explicitly describe two families of lattices (Lemma \ref{lemma:lattices_quad}) giving rise to two complex parallelizable manifolds with different Dolbeault and Bott-Chern cohomologies (Table \ref{table:1} and Table \ref{table:2}) which have a structure of holomorphic fiber bundle over a complex torus with non K\"ahler typical fiber, i.e., the \emph{Iwasawa} manifold.

\begin{acknowledgements}
The first author would like to thank Jonas Stelzig for his kind hospitality at Ludwig-Maximilians-Universit\"at  M\"unchen and the authors would like to thank him for many useful discussions and remarks. The authors would also like to thank Beatrice Brienza for pointing out reference \cite{Yam}.
\end{acknowledgements}
\section{Notation and preliminary}\label{sec:2}
We denote by $M=\Gamma\backslash G$ the quotient of a simply connected solvable Lie group $G$ with Lie algebra $\mathfrak{g}$ by a lattice $\Gamma$ and by $J$ an invariant complex structure, i.e., a complex structure on $\mathfrak{g}$. Let $\mathfrak{g}^*$ be the dual of $\mathfrak{g}$ and let $J$ define a complex structure as usual  on $\mathfrak{g}^*$ by $J\alpha(X)=\alpha(JX)$, for every $\alpha\in \mathfrak{g}^*$, $X\in \mathfrak{g}$. Then, the space $\mathfrak{g}_{\C}^*:=\mathfrak{g}^*\otimes \C$ decomposes as
\[
\mathfrak{g}_{\C}^*=(\mathfrak{g}_{\C}^*)^{1,0}\oplus (\mathfrak{g}_{\C}^*)^{0,1}
\]
where $(\mathfrak{g}_{\C}^*)^{1,0}=\{\alpha-iJ\alpha:\alpha\in \mathfrak{g}^*\}$ and $(\mathfrak{g}_{\C}^*)^{0,1}=\{\alpha+iJ\alpha:\alpha\in \mathfrak{g}^*\}$ are, respectively, the $\pm i$-eigenspaces of the $\C$-linear extension of $J$ on $\mathfrak{g}_{\C}^*$.

For a compact complex manifold $(M,J)$ we denote by $\mathcal{A}^{\bullet}_{\C}(M)$ and $\mathcal{A}^{\bullet,\bullet}(M)$ respectively, the space of complex-valued differential forms on $(M,J)$, and the space of $(p,q)$-forms on $(M,J)$. Set, as usual, 
$$\del =\Pi^{1,0}\circ d:\mathcal{A}^{p,q}(M)\to 
\mathcal{A}^{p+1,q}(M)\,\qquad \delbar =\Pi^{0,1}\circ d:\mathcal{A}^{p,q}(M)\to 
\mathcal{A}^{p,q+1}(M).$$  Accordingly, the exterior differential decomposes as $d=\del +\delbar$. Then, 
\[
H_{BC}^{\bullet,\bullet}(M):=\frac{\ker \del \cap \ker\delbar}{\im \del\delbar}\cap \mathcal{A}^{\bullet,\bullet}(M)
\]
is the \emph{Bott-Chern cohomology} of $(M,J)$ and we denote by
\[
H_{A}^{\bullet,\bullet}(M):=\frac{\ker \del\delbar}{\im \del+\im \delbar}\cap \mathcal{A}^{\bullet,\bullet}(M)
\]
the \emph{Aeppli cohomology} of $(M,J)$.

According to \cite{Schw}, once fixed an Hermitian metric $g$ on $(M,J)$, setting the \emph{Bott-Chern Laplacian} and the \emph{Aeppli Laplacian} respectively as
\begin{gather*}
\Delta_{BC}:= \del\delbar\delbar^*\del^* +\delbar^*\del^*\del\delbar + \delbar^*\del \del^*\delbar + \del^*\delbar\delbar^*\del +\delbar^*\delbar +\del^*\del,\\
\Delta_{A}:= \del\del^*+\delbar\delbar^*+ \delbar^*\del^*\del\delbar +\del\delbar\delbar^*\del^*+\del\delbar^*\delbar\del^*+\delbar\del^*\del\delbar^*,
\end{gather*}
it turns out that $\Delta_{BC}$ and $\Delta_{A}$ are fourth order self-adjonint elliptic operators and, consequently, the following isomorphisms of vector spaces hold
\[
\ker\Delta_{BC}\cong H_{BC}(M), \qquad  \ker\Delta_{A}\cong H_{A}(M).
\]
In particular, a $(p,q)$-form is \emph{Bott-Chern harmonic} (respectively, \emph{Aeppli harmonic}) if, fixed a Hermitian metric $g$ on $(M,J)$, it holds 
\[
d\alpha=0, \qquad \del\delbar\ast_g\alpha=0,
\]
respectively,
\[
\del\delbar\alpha=0, \qquad \del\ast_g\alpha=0, \qquad \delbar\ast_g\alpha=0.
\]
The Bott-Chern cohomology of a compact complex manifold has a structure of algebra induced by the $\cup$ product of cohomology classes and the Aeppli cohomology has a structure of $H_{BC}$-module induced by the $\cup$ product, whereas this is no longer true for their harmonic representatives. Therefore, the authors in \cite{AngTom} give the following definition, on the lines of Kotschick in \cite{Kot} and the second author and Torelli in \cite{TomTor}.
\begin{definition}
A Hermitian metric $g$ on $(M,J)$ is said \emph{geometrically-Bott-Chern-formal} (shortly, \emph{geometrically-$BC$-formal}) if the space of \emph{Bott-Chern harmonic forms} 
$\mathcal{H}_{\Delta_{BC}}^{\bullet,\bullet}(M):=\ker\Delta_{BC}$ has a structure of algebra induced by the $\wedge$ product.
\end{definition} 

We recall the construction of \emph{triple $ABC$-Massey products} and \emph{quadruple $ABC$-Massey products}. We refer to \cite{AngTom} and \cite{MS} for further details on, respectively, the triple $ABC$-Massey products and the quadruple $ABC$-Massey products. 

Let $[\alpha]\in H_{BC}^{p,q}(M)$, $[\beta]\in H_{BC}^{r,s}(M)$, and $[\gamma]\in H_{BC}^{u,v}(M)$ such that
\[
\alpha\wedge\beta=(-1)^{p+q}\del\delbar f_{\alpha\beta}, \qquad \beta\wedge\gamma=(-1)^{r+s}\del\delbar f_{\beta\gamma}.
\] 
Then, the \emph{triple $ABC$-Massey product} $\langle[\alpha],[\beta],[\gamma]\rangle_{ABC}$ is represented by
\[
\langle[\alpha],[\beta],[\gamma]\rangle_{ABC}=[(-1)^{p+q}\alpha\wedge f_{\beta\gamma}-(-1)^{r+s}f_{\alpha\beta}\wedge\gamma]\in\frac{H_A^{p+r+u-1,q+s+v-1}(M)}{[\alpha]\cup H_A^{r+u-1,s+v-1}(M)+[\gamma]\cup H_A^{p+r-1,q+s-1}(M)}.
\]
\begin{lemma}\label{lemma:corrinvABC}
Let $(M=\Gamma\backslash G,J)$ be the compact quotient of simply connected Lie group $G$ by a discrete co-compact subgroup $\Gamma$ and let $J$ be an invariant complex structure on $M$, i.e., induced by an invariant structure $J$ on the Lie algebra $\mathfrak{g}$ of $G$.  Then, the map
\[
i\colon\left\{\text{triple ABC-Massey products on} \,\,(\mathfrak{g},J)\right\}\hookrightarrow \left\{\text{triple ABC-Massey products on} \,\,(M,J)\right\} 
\]
is injective.
\end{lemma}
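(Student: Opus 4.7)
The plan is to use the classical symmetrization argument, as in the proofs of Nomizu-type transfer results. As a preliminary step, I would record that the inclusion $\iota\colon \Lambda^\bullet \mathfrak{g}^*_{\C} \hookrightarrow \mathcal{A}^\bullet_{\C}(M)$ of left-invariant forms is a morphism of bigraded differential algebras: it commutes with $\wedge$, $\del$ and $\delbar$. This already makes the map $i$ in the statement well-defined, because if $\langle[\alpha],[\beta],[\gamma]\rangle_{ABC}$ is a triple $ABC$-Massey product on $(\mathfrak{g},J)$ computed via invariant potentials $f_{\alpha\beta}$, $f_{\beta\gamma}\in \Lambda^\bullet \mathfrak{g}^*_{\C}$, then the same invariant cochain $(-1)^{p+q}\alpha\wedge f_{\beta\gamma}-(-1)^{r+s}f_{\alpha\beta}\wedge\gamma$ continues to represent the analogous Massey product on $(M,J)$ in the appropriate Aeppli quotient.

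For injectivity, the essential tool is the symmetrization map $\mu\colon \mathcal{A}^\bullet_{\C}(M)\to \Lambda^\bullet \mathfrak{g}^*_{\C}$, obtained by averaging over $\Gamma\backslash G$ against the normalized bi-invariant Haar measure. Via a standard direct computation exploiting left-invariance of both the operators and the measure, I would verify that $\mu$ is (i) $\C$-linear, (ii) commutes with $\del$ and $\delbar$ (hence with $\del\delbar$), (iii) satisfies $\mu\circ\iota=\id$, and, crucially, (iv) is $\Lambda^\bullet \mathfrak{g}^*_{\C}$-bilinear in the sense that $\mu(\iota(\alpha)\wedge\omega)=\alpha\wedge\mu(\omega)$ and $\mu(\omega\wedge \iota(\gamma))=\mu(\omega)\wedge\gamma$ whenever $\alpha,\gamma$ are invariant and $\omega\in \mathcal{A}^\bullet_{\C}(M)$.

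The core step is then to show that an invariant Massey product that vanishes on $M$ also vanishes at the Lie algebra level. Suppose the image under $i$ of $\langle[\alpha],[\beta],[\gamma]\rangle_{ABC}$ is zero in the target Aeppli quotient on $M$. Then there exist Aeppli-closed forms $\eta,\xi$ on $M$ and forms $u,v$ producing an identity of the shape
\[
(-1)^{p+q}\alpha\wedge f_{\beta\gamma}-(-1)^{r+s}f_{\alpha\beta}\wedge\gamma=\alpha\wedge \eta+\xi\wedge\gamma+\del u+\delbar v.
\]
Applying $\mu$ to both sides and invoking properties (i)--(iv), together with the invariance of $\alpha,\gamma,f_{\alpha\beta},f_{\beta\gamma}$, yields the same relation at the Lie algebra level with $\mu(\eta),\mu(\xi),\mu(u),\mu(v)$ replacing the original terms. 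These symmetrized forms remain Aeppli-closed thanks to (ii), and they are invariant by construction, so they witness the vanishing of $\langle[\alpha],[\beta],[\gamma]\rangle_{ABC}$ on $(\mathfrak{g},J)$, giving injectivity.

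The main obstacle is the verification of the $\Lambda^\bullet \mathfrak{g}^*_{\C}$-bilinearity of $\mu$ in (iv) and a careful accounting of exactly which relations in the ideal of indeterminacy on $M$ need to be transported back to $\mathfrak{g}$. Once these compatibilities are in place, the remainder reduces to a bookkeeping exercise matching the definition of triple $ABC$-Massey product recalled above.
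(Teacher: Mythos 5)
Your proposal is correct and follows essentially the same route as the paper: both arguments reduce injectivity to showing that a relation witnessing triviality on $M$ can be pushed down to $\mathfrak{g}$ via the symmetrization (averaging) map, using that it commutes with $\del$, $\delbar$ and is a module map over invariant forms. The only difference is one of emphasis — you spell out the module property (iv) that the paper invokes implicitly when it "applies the symmetrization process to both sides" — so no substantive gap remains.
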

\begin{proof}
Let $[\alpha],[\beta],[\gamma]\in H_{BC}(\mathfrak{g})$ so that $\mathcal{P}:=\langle[\alpha],[\beta],[\gamma]\rangle_{ABC}$ is a non vanishing triple $ABC$-Massey product on $\mathfrak{g}$ with $f_{\alpha\beta}$ and $f_{\beta\gamma}$ its associated invariant $\del\delbar$-primitives. Being $\mathcal{P}$ non vanishing, for the representative of $\mathcal{P}$ it holds that 
\begin{equation}\label{eq:lemma_ABC_triple}
(-1)^{|\alpha|}\alpha\wedge f_{\beta\gamma}-(-1)^{|\beta|}f_{\alpha\beta}\wedge \gamma\notin [\alpha]\cup H_A(\mathfrak{g})+[\gamma]\cup H_A(\mathfrak{g}).
\end{equation}
By contradiction let us assume that $i(\mathcal{P})$ is trivial as a triple $ABC$-Massey product on $(M,J)$. We can choose again $f_{\alpha\beta}$ and $f_{\beta\gamma}$ as $\del\delbar$-primitives, as any other choice of (not necessarily invariant) primitives, by definition of $ABC$-Massey product, would yield terms of the representative that still belong to the indetermincy ideal. Since $i(\mathcal{P})$ is trivial on $(M,J)$, there exist  $\xi,\eta, R, S\in\mathcal{A}(M)$ such that
\[
0\neq (-1)^{|\alpha|}\alpha\wedge f_{\beta\gamma}-(-1)^{|\beta|}f_{\alpha\beta}\wedge \gamma= \alpha\wedge \xi +\gamma\wedge\eta +\del R+\delbar S.
\]
Note that at least one form in $\{\xi,\eta,R,S\}$ is not left-invariant. 
However, by applying a symmetrization process (see, e.g., \cite{B,FG,U}) on both sides of the above equation, we obtain
\[
0\neq (-1)^{|\alpha|}\alpha\wedge f_{\beta\gamma}-(-1)^{|\beta|}f_{\alpha\beta}\wedge \gamma= \alpha\wedge \tilde{\xi} +\gamma\wedge\tilde{\eta} +\del\tilde{R}+\delbar\tilde{S}.
\]
The right-hand side of the equations, however, has to be non vanishing and it belongs to $[\alpha]\cup H_A(\mathfrak{g})+[\gamma]\cup H_A(\mathfrak{g})$, contraddicting  \eqref{eq:lemma_ABC_triple}, which concludes the proof.\\
\end{proof}

We now recall the construction of quadruple $ABC$-Massey products. The {\em Schweitzer complex} $S_{p,q}^k$ is defined for every pair $(p,q)$ as
\begin{gather*}
\dots \xrightarrow{\del\delbar} S_{p,q}^{-2}=\mathcal{A}^{p-2,q-2}(M)\xrightarrow {\del+\delbar} \quad S_{p,q}^{-1} =\mathcal{A}^{p-1,q-2}(M)\oplus \mathcal{A}^{p-2,q-1}(M)\xrightarrow{\text{pr}\circ d} S_{p,q}^{0}=
\mathcal{A}^{p-1,q-1}\\
\xrightarrow{\del\delbar} S_{p,q}^{1}=\mathcal{A}^{p,q}(M)\xrightarrow{\del +\delbar}\dots
\end{gather*}
so that $$H_{S_{p,q}}^1=H_{BC}^{p,q}(M), \qquad H_{S_{p,q}}^{0}=H_A^{p-1,q-1}(M).$$
Let $[\alpha]\in H_{BC}(M)$, $[\beta]\in H_{BC}(M)$, $[\gamma]\in H_{BC}(M)$, and $[\delta]\in H_{BC}(M)$   such that $\alpha\wedge\beta\wedge\gamma\wedge\delta\in\mathcal{A}^{p,q}(M)$. Let $x,y,z,\eta,\eta',\xi,\xi'\in\mathcal{A}^{\bullet,\bullet}(M)$ such that 
\[
\del\delbar x=\alpha\wedge\beta,\quad \del\delbar y=\beta\wedge\gamma,\quad \del\delbar z=\gamma\wedge\delta
\]
and
\[
x\gamma-\alpha y=\del\eta +\delbar \eta', \quad y\delta-\beta z=\del\xi +\delbar \xi'.
\]
Then the quadruple $ABC$-Massey product $\langle[\alpha],[\beta],[\gamma],[\delta]\rangle_{ABC}$ is represented by
\begin{equation}\label{eq:ABC_quad}
[(-1)^{|\alpha|}\alpha\wedge(\xi+\xi')-(\del x)\wedge z+(-1)^{|x|+1}x\wedge\delbar z+(\eta+\eta')\wedge\delta]
\end{equation}
in
\begin{equation}\label{eq:H_S_-1_pq_quad}
\resizebox{.18\textwidth}{!}{$ H_{S_{p,q}}^{-1}(\mathcal{A}^{\bullet,\bullet}(M))$}\resizebox{.80\textwidth}{!}{$\displaystyle =\frac{\ker(\text{pr}\circ d\colon \mathcal{A}^{p-2,q-1}(M)\oplus \mathcal{A}^{p-1,q-2}(M)\rightarrow \mathcal{A}^{p-1,q-1}(M))}{\im(\text{pr}\circ d\colon \mathcal{A}^{p-3,q-1}(M)\oplus \mathcal{A}^{p-2,q-2}(M)\oplus\mathcal{A}^{p-1,q-3}(M)\rightarrow\mathcal{A}^{p-2,q-1}(M)\oplus \mathcal{A}^{p-1,q-2}(M))}.$}
\end{equation}

Let $(M=\Gamma\backslash G,J)$ be a $2k$-dimensional nilmanifold endowed with an invariant almost complex structure $J$ induced by a frame $\{\eta^1,\dots, \eta^k\}$ of $(\mathfrak{g}_{\C}^*)^{1,0}$ such that
\begin{equation}\label{eq:struct_eq}
\begin{cases}
d\eta^j=0, \quad j\in \{1,\dots,k-1\}\\
d\eta^k\in\text{Span}_{\C}\langle \eta^{ij}, \eta^{i\overline{j}}\rangle_{i,j=1,\dots, k-1}
\end{cases}.
\end{equation}
We will call $J$ a \emph{special type complex structure}.

Finally, we recall that a Hermitian metric $g$ on a complex manifold $(M,J)$ is said to be \emph{strong K\"ahler with torsion}, shortly SKT, if its fundamental form $\omega$ satisfies
\[
\del\delbar\omega=0.
\]
\section{SKT metrics and geometrically-$BC$-formal metrics in complex dimension 3}\label{sec:3}
In this section, we prove that the property of admitting a geometrically-$BC$-formal metric is equivalent to the property of admitting a SKT metric on any six-dimensional nilmanifold endowed with an invariant complex structure.

\begin{theorem}\label{thm:SKT-geomBC-dim3}
Let $(\Gamma\backslash G:=M,J)$ be a $6$-dimensional nilmanifold endowed with an invariant complex structure $J$. Then, $(M,J)$ is SKT if, and only if, it is geometrically-$BC$-formal. In particular, every Hermitian invariant metric is SKT if, and only if, it is geometrically-$BC$-formal.
\end{theorem}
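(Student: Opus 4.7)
The plan is to combine Salamon's normalization of invariant complex structures on $6$-dimensional nilmanifolds with the Fino--Parton--Salamon classification of SKT $6$-nilmanifolds, and then to handle each resulting family case-by-case.

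First, by Salamon one may fix an invariant $(1,0)$-coframe $\{\eta^1,\eta^2,\eta^3\}$ with $d\eta^1=0$, $d\eta^2\in\Lambda^2\langle\eta^1,\bar\eta^1\rangle$ and $d\eta^3\in\Lambda^2\langle\eta^1,\eta^2,\bar\eta^1,\bar\eta^2\rangle$, reducing the problem to finitely many families parametrized by structure constants. By Fino--Parton--Salamon, the existence of an SKT metric on $(M,J)$ is equivalent to the existence of an invariant one (via standard symmetrization) and translates into an explicit polynomial condition on these constants; moreover, whenever this condition holds in real dimension $6$, every invariant Hermitian metric is automatically SKT, because the space of invariant $(2,2)$-forms is too small to accommodate a non-zero $\partial\bar\partial\omega$.

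For the direction SKT $\Rightarrow$ geometrically-$BC$-formal, in each family admitting an SKT metric I would take an arbitrary invariant Hermitian metric $\omega=i\sum g_{j\bar k}\eta^j\wedge\bar\eta^k$, compute a basis of Bott-Chern harmonic forms in each bidegree via the Schweitzer characterization $d\alpha=0$, $\partial\bar\partial\ast_g\alpha=0$, and verify directly that their wedge products remain harmonic. The strong vanishing forced on the structure equations by the SKT condition keeps this verification essentially bookkeeping, and yields simultaneously that every invariant Hermitian metric is geometrically-$BC$-formal.

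For the converse, in each family not admitting an invariant SKT metric I would construct a non-vanishing triple $ABC$-Massey product $\langle[\alpha],[\beta],[\gamma]\rangle_{ABC}$ at the Lie algebra level, built from classes of $(1,0)$- and $(0,1)$-forms whose products are precisely the invariant $(2,2)$-forms obstructing $\partial\bar\partial\omega=0$. By Lemma \ref{lemma:corrinvABC} this Massey product remains non-vanishing on $(M,J)$, and since geometric $BC$-formality forces all triple $ABC$-Massey products to vanish (as in \cite{AngTom}), no Hermitian metric, invariant or not, can be geometrically-$BC$-formal. The main obstacle is precisely this converse: producing, in each non-SKT family, an explicit Massey product whose indeterminacy ideal $[\alpha]\cup H_A+[\gamma]\cup H_A$ can be controlled requires full knowledge of the invariant Bott-Chern and Aeppli cohomology of $\mathfrak{g}$ family by family, and constitutes the bulk of the computational effort.
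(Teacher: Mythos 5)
Your strategy is essentially the one the paper follows: reduce to normal forms via Salamon, use the Fino--Parton--Salamon classification together with symmetrization for the forward direction (in the normalized coframe every invariant Hermitian metric is SKT, and then geometric $BC$-formality is quoted from \cite[Theorem 7.3]{SfeTom2}), and for the converse exhibit, family by family, a non-vanishing triple $ABC$-Massey product at the Lie-algebra level, lifted to $(M,J)$ by Lemma \ref{lemma:corrinvABC} and used as an obstruction to geometric $BC$-formality as in \cite{AngTom}.

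There is, however, one substantive inaccuracy in your reduction: the normalization $d\eta^1=0$, $d\eta^2\in\Lambda^2\langle\eta^1,\bar\eta^1\rangle$, $d\eta^3\in\Lambda^2\langle\eta^1,\eta^2,\bar\eta^1,\bar\eta^2\rangle$ is the normal form for \emph{nilpotent} complex structures only, whereas the theorem concerns arbitrary invariant $J$. The reduction actually used (Salamon, Theorem 2.5) is via a real Malcev coframe $\{e^1,\dots,e^6\}$ with upper-triangular structure equations and two possible standard actions of $J$; imposing integrability on the second family yields $d\eta^2=\lambda_1\eta^{13}+\lambda_2\eta^{1\overline{1}}+\lambda_1\eta^{1\overline{3}}$, which for $\lambda_1\neq 0$ (in particular for the non-nilpotent structures on $\mathfrak{h}_{19}^-$ and $\mathfrak{h}_{26}^+$) does not fit your ansatz. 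These structures are never SKT, so they lie squarely in the converse direction and cannot be dropped; fortunately the Massey products needed there are exactly of the shape you describe (e.g. $\langle[\eta^{12}],[\eta^{\overline{12}}],[\eta^{\overline{12}}]\rangle_{ABC}$ and $\langle[\eta^{13}],[\eta^{\overline{13}}],[\eta^{\overline{13}}]\rangle_{ABC}$, with $\del\delbar$-primitives $\eta^{3\overline{3}}$, resp.\ $\eta^{2\overline{2}}$), so your plan goes through once the case list is corrected. A minor further point: the reason every invariant Hermitian metric is SKT once one is, is not that the space of invariant $(2,2)$-forms is too small, but that in the \cite{FPS} normal form the condition $\del\delbar\omega=0$ reduces to the metric-independent identity $|A|^2+|C|^2+|D|^2=2\mathrm{Re}(\overline{B}D)$ on the structure constants.
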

\begin{proof}
Let us first assume that $(M,J)$ admits a SKT metric, which we can assume to be invariant by the symmetrization process. By \cite[Theorem 1.2]{FPS}, then $(M,J)$ admits a basis $\{\alpha^1,\alpha^2,\alpha^3\}$ of invariant $(1,0)$-forms such that
\[
\begin{cases}
d\alpha^1=0, \qquad d\alpha^2=0,\\
d\alpha^3=A\alpha^{12}+B\alpha^{1\overline{1}}+C\alpha^{1\overline{2}}+D\alpha^{2\overline{1}}+E\alpha^{2\overline{2}},
\end{cases}
\]
with $A,B,C,D,E\in\C$ satisfying $|A|^2+|C|^2+|D|^2=2\mathfrak{Re}(\overline{B}D)$. In particular, every invariant metric on $(M,J)$ is SKT. But then, in \cite[Theorem 7.3]{SfeTom2} it was shown that every invariant Hermitian metric on $(M,J)$ is also geometrically-$BC$-formal.

Viceversa, let us assume that the nilmanifold $(M=\Gamma\backslash G,J)$ is not SKT. Then, we will prove that the Lie algebra $(\mathfrak{g},J)$ of $G$ admits a non vanishing $ABC$-Massey product, which corresponds to a non vanishing $ABC$-Massey product on $(M,J)$ by Lemma \ref{lemma:corrinvABC}, thus proving that $(M,J)$ is not geometrically-$BC$-formal.

Since the nilmanifold $M=\Gamma\backslash G$ admits an invariant complex structure $J$, by \cite[Theorem 2.5]{Sal}, we can assume that the dual $\mathfrak{g}^*$ of the Lie algebra $\mathfrak{g}$ of $G$ admits a real basis $\{e^1,\dots,e^6\}$ with structure equations
\begin{equation}\label{eq:struct_nil3_real}
\begin{cases}
d e^1=0,\\
d\eta^2=0,\\
d e^3= A_1 e^{12},\\
de^4= B_1 e^{12} + B_2 e^{13} + B_3 e^{23},\\
de^5=C_1 e^{12} +C_2 e^{13} + C_3 e^{14} + C_4 e^{23} + C_5 e^{24} + C_6 e^{34},\\
de^6= D_1 e^{12} + D_2 e^{13} + D_3 e^{14} + D_4 e^{15} + D_5 e^{23} + D_6 e^{24} + D_7 e^{25} + D_8 e^{34},
\end{cases}
\end{equation}
for $A_1,B_i,C_j,D_k\in\q$. We point out that the Jacoby identity, i.e., $d^2=0$, implies that the coefficients must satisfy
\begin{align}\label{eq:Jacobi_nil3}
\begin{cases}
-B_3D_3 - C_4D_4 + B_2D_6 + C_2D_7 - B_1D_8=0,\\
-B_3C_3 + B_2C_5 - B_1C_6=0, \\ 
-C_5D_4 + C_3D_7 + A_1D_8=0, \\  
A_1C_6=0\\
C_6D_4=0, \\
C_6D_7=0.
\end{cases}
\end{align}
Moreover, \cite[Theorem 2.5]{Sal} states that we can assume the complex structure on $J$ to act on $\mathfrak{g}^*$ either as
\[
(I) \quad Je^1=-e^2,\quad Je^2=e^1,\quad Je^3=-e^4,\quad  Je^4=e^3,\quad Je^5=-e^6,\quad Je^6=e^5
\]
or
\[
(II)\quad Je^1=-e^2,\quad Je^2=e^1,\quad Je^3=-e^6,\quad  Je^4=-e^5,\quad Je^5=e^4,\quad Je^6=e^3,
\]
so that the bases of $(\mathfrak{g}_{\C}^*)^{1,0}$ are, respectively,
\[
(I) \quad \eta^1=e^1+ie^2, \quad \eta^2=e^3+ie^4, \quad \eta^3=e^5+ie^6,
\]
and
\[
(II) \quad \eta^1=e^1+ie^2, \quad \eta^2=e^3+ie^6, \quad \eta^3=e^4+ie^5.
\]
For each family of complex structures, we proceed by imposing the integrability of $J$, that is,  by requiring that $(d\eta^j)^{0,2}=0$, for any $j\in\{1,2,3\}$. We analyze each case separately.

\subsection{Family $(I)$}
Starting from family $(I)$, we clearly have that $d\eta^1=d e^1+i de^2=0$. From $(d\eta^2)^{0,2}=0$, we obtain that $B_2=B_3=0$, whereas $(d\eta^3)^{0,2}=0$ implies that $D_4=D_7=0$ and
\[
C_2=C_5+D_3+D_5, \qquad D_6=C_3+C_4+D_2.
\]
Therefore, plugging these relations in \eqref{eq:Jacobi_nil3}, the complex structure equations of the basis $\{\eta^1,\eta^2,\eta^3\}$ boil down to
\[
\begin{cases}
d\eta^1&=0,\vspace{0.2cm}\\
d\eta^2&=\frac{i}{2}(A_1+iB_1)\eta^{1\overline{1}}\vspace{0.2cm}\\
d\eta^3&=\frac{1}{2} (D_3- iC_4 + D_5-iC_3)\eta^{12} + \frac{i}{2}(C_1 +i D_1)\eta^{1\overline{1}} + \frac{1}{2}(C_5 + iD_2 +  D_5+ i C_3 )\eta^{1\overline{2}}\vspace{0.2cm}\\
&+ \frac{1}{2}(-C_5-iD_2-D_3 -iC_4)\eta^{2\overline{1}} + \frac{i}{2}(C_6 +iD_8)\eta^{2\overline{2}},
\end{cases}
\]
with coefficients satisfying
\begin{equation}\label{eq:Jacobi_nil3_2}
B_1C_6=0,\quad A_1C_6=0,\quad B_1D_8=0, \quad A_1D_8=0.
\end{equation}
By setting
\begin{align*}
\lambda_1:=A_1+iB_1,\quad  \lambda_2:=C_1+iD_1,\quad \lambda_3:=D_3+iC_4,\\
\lambda_4:=D_5+iC_3,\quad \lambda_5:=C_5+iD_2,\quad \lambda_6:=C_6+iD_8,
\end{align*}
it turns out that
\begin{align*}
\begin{cases}
d\eta^1&=0,\vspace{0.2cm}\\
d\eta^2&=\frac{i}{2}\lambda_1\eta^{1\overline{1}}\vspace{0.2cm}\\
d\eta^3&=\frac{1}{2} (\overline{\lambda}_3+\overline{\lambda}_4)\eta^{12} + \frac{i}{2}\lambda_2\eta^{1\overline{1}} + \frac{1}{2}(\lambda_4+\lambda_5)\eta^{1\overline{2}}+ \frac{1}{2}(-\lambda_3-\lambda_5)\eta^{2\overline{1}} + \frac{i}{2}\lambda_6\eta^{2\overline{2}},
\end{cases}
\end{align*}
and from \eqref{eq:Jacobi_nil3_2}, we have that $\lambda_1\lambda_6=0$. As a result, the complex structures $J$ of family $(I)$ are nilpotent. We rewrite again the complex structure equations as
\begin{equation}\label{eq:struct_eq1}
\begin{cases}
d\eta^1=0,\\
d\eta^2=\lambda_1\eta^{1\overline{1}},\\
d\eta^3=\lambda_2\eta^{12}+\lambda_3\eta^{1\overline{1}}+\lambda_4\eta^{1\overline{2}}+\lambda_5\eta^{2\overline{1}}+\lambda_6\eta^{2\overline{2}},
\end{cases}
\end{equation}
with $\lambda_j\in\q[i]$, $j\in\{1,\dots,6\}$ and $\lambda_1\lambda_6=0$. Note that for any fixed Hermitian metric on $(\mathfrak{g},J)$, we can always apply the Gram-Schmidt process to the basis $\{\eta^1, \eta^2,\eta^3\}$ to obtain a orthonormal basis which still satisfies structure equations \eqref{eq:struct_eq1}; we will denote the orthonormal coframe with the same notation. In this framework, the fixed metric $g$ is diagonal and $\omega=\frac{i}{2}(\eta^{1\overline{1}}+\eta^{2\overline{2}}+\eta^{3\overline{3}}).$

We analyze separately the cases $\lambda_1=0$ and $\lambda_6=0$.

\subsubsection{Case $\lambda_1=0$}
As a consequence, structure equations \eqref{eq:struct_eq1} become
\[
\begin{cases}
d\eta^1=0,\quad 
d\eta^2=0,\\
d\eta^3=\lambda_2\eta^{12}+\lambda_3\eta^{1\overline{1}}+\lambda_4\eta^{1\overline{2}}+\lambda_5\eta^{2\overline{1}}+\lambda_6\eta^{2\overline{2}}.
\end{cases}
\]
The SKT condition for $g$, i.e., $\del\delbar\omega=0$, reads
\[
2i\del\delbar\omega=\del\delbar\eta^{3\overline{3}}=\left(-|\lambda_2|^2-|\lambda_4|^2-|\lambda_5|^2+2\mathfrak{Re}(\overline{\lambda}_3\lambda_6)\right)\eta^{12\overline{12}}=0.
\]

Set $C:=-|\lambda_2|^2-|\lambda_4|^2-|\lambda_5|^2+2\mathfrak{Re}(\overline{\lambda}_3\lambda_6)$. Since $(M,J)$ is not SKT, the metric $g$ cannot be SKT on $(\mathfrak{g},J)$ and it must hold that $C\neq 0$.

Then, consider the $ABC$-Massey product on $(\mathfrak{g},J)$ given by
\[
\mathcal{P}:=\left\langle[\eta^{1\overline{1}}]_{BC},[\eta^{2\overline{2}}]_{BC}, [\eta^{2\overline{2}}]_{BC} \right\rangle_{ABC}\in\frac{H_A^{2,2}(\mathfrak{g})}{[\eta^{1\overline{1}}]\cup H_A^{1,1}(\mathfrak{g})+[\eta^{2\overline{2}}]\cup H_A^{1,1}(\mathfrak{g})}.
\]
The forms $\eta^{1\overline{1}}$ and $\eta^{2\overline{2}}$ are both $d$-closed and $\del\delbar\ast$-closed, so they are Bott-Chern harmonic, hence $\mathcal{P}$ is well defined. Furthermore, since 
\[
\eta^{1\overline{1}}\wedge\eta^{2\overline{2}}=\del\delbar\left(-\frac{1}{C}\eta^{3\overline{3}}\right)\neq 0,
\]
if one sets $\tilde{\gamma}:=1$, by Theorem \ref{thm:obstruction} of Section \ref{sec:4} we obtain a non vanishing $ABC$-Massey product on $\mathfrak{g}$.

\subsubsection{$\lambda_6=0$} Then, the structure equations \eqref{eq:struct_eq} become
\begin{equation*}
\begin{cases}
d\eta^1=0,\\ d\eta^2=\lambda_1\eta^{1\overline{1}},\\
d\eta^3=\lambda_2\eta^{12}+\lambda_3\eta^{1\overline{1}}+\lambda_4\eta^{1\overline{2}}+\lambda_5\eta^{2\overline{1}}.
\end{cases}
\end{equation*}
The SKT condition for the fixed Hermitian metric $g$ reads
\[
2i\del\delbar\omega=\left(-|\lambda_2|^2-|\lambda_4|^2-|\lambda_5|^2\right)\eta^{12\overline{12}}=0.
\]
Since $(M,J)$ is not SKT, it must hold that $C:=-(|\lambda_2|^2+|\lambda_4|^2+|\lambda_5|^2)\neq 0$. Then, let us consider the $ABC$-Massey product on $\mathfrak{g}$ given by
\[
\mathcal{P}:=\langle[\eta^{1\overline{1}}]_{BC},[\eta^{2\overline{2}}]_{BC},[\eta^{1\overline{2}}]_{BC}\rangle_{ABC}\in\frac{H_A^{2,2}(\mathfrak{g})}{[\eta^{1\overline{1}}]\cup H_A^{1,1}(\mathfrak{g})+[\eta^{1\overline{2}}]\cup H_A^{1,1}(\mathfrak{g})}.
\]
The forms $\eta^{1\overline{1}}$, $\eta^{2\overline{2}}$ and $\eta^{1\overline{2}}$ are both $d$-closed and $\del\delbar\ast$-closed. Hence, they are Bott-Chern harmonic and $\mathcal{P}$ is well defined. Moreover, $\del\delbar(-\frac{1}{C}\eta^{3\overline{3}})=\eta^{1\overline{1}2\overline{2}}$, hence $\mathcal{P}$ is represented by the Aeppli cohomology class
\[
\left[-\frac{1}{C}\eta^{13\overline{23}}\right]_A.
\]
Note that the representative $-\frac{1}{C}\eta^{13\overline{23}}$ is also $d\ast$-closed, so that it is Aeppli harmonic and the cohomology class $[-\frac{1}{C}\eta^{13\overline{23}}]_A\neq 0$. It remains to prove that $$\left[\-\frac{1}{C}\eta^{13\overline{23}}\right]_A\notin [\eta^{1\overline{1}}]_{BC}\cup H_A^{1,1}+[\eta^{1\overline{2}}]_{BC}\cup H_A^{1,1}.$$ By contradiction, let us assume on the contrary that there exist $\nu_j, \mu_j\in\C$ and forms $R\in\bigwedge^{1,2}\mathfrak{g}_\C^*$, $S\in\bigwedge^{2,1}\mathfrak{g}_\C*$ such that
\begin{equation}\label{eq:ideal_nil_dim3}
-\frac{1}{C}\eta^{13\overline{23}}=\sum_{j=1}^{h_A^{1,1}}(\nu_j\eta^{1\overline{1}}+\mu_j\eta^{1\overline{2}})\wedge\xi^j+\del R+\delbar S.
\end{equation}
We now consider extension to $(M,J)$ of the differential forms and of the metric so far introduced on $(\mathfrak{g},J)$. By taking the $L^2(M)$-product of both sides of equation \eqref{eq:ideal_nil_dim3} with the form $\eta^{13\overline{23}}$, we obtain
\begin{align}\label{eq:ideal_2}
0\neq -\frac{1}{C}||\eta^{13\overline{23}}||^2=\int_M\sum_{j=1}\mu_j\eta^{1\overline{2}}\wedge\xi^j\wedge\ast(\eta^{13\overline{23}})+\int_M \del R\wedge\ast(\eta^{13\overline{23}})+\int_M \delbar S\wedge \ast(\eta^{13\overline{23}}),
\end{align}
with $\{\xi^{j}\}_j$ a basis of Aeppli harmonic $(1,1)$ forms on $\mathfrak{g}$. 
Since $\ast(\eta^{13\overline{23}})$ is both $\del$-closed and $\delbar$-closed, via Stokes' theorem the last two terms of the right-hand side of \eqref{eq:ideal_2} vanish, so to obtain
\begin{equation*}
0 \neq -\sum_{j=1}^{h_A^{1,1}}\int_M\mu_j\eta^{12\overline{12}}\wedge\xi^j,
\end{equation*}
which implies that there exists $j_0\in\{1,\dots, h_A^{1,1}\}$ such that $\xi^{j_0}=A\eta^{3\overline{3}}+\zeta$, for a non zero constant $A\in\C$ and a form $\zeta\in\bigwedge^{1,1}\mathfrak{g}_{\C}^*$ with $\zeta\not\in \mathcal{I}(\eta^3,\overline{\eta}^3)$. However, since $\xi^{j_0}\in\mathcal{H}_A^{1,1}(M)$, it must hold that
\[
0=\del\delbar\xi^{j_0}=A\del\delbar(\eta^{3\overline{3}})+\del\delbar\zeta=AC\eta^{12\overline{12}}\neq 0,
\]
with $C\in\C\setminus\{0\}$, which leads to a contradiction. Therefore, we have obtained that $$\left[\frac{1}{C}\eta^{13\overline{23}}\right]_A\notin [\eta^{1\overline{1}}]_{BC}\cup H_A^{1,1}+[\eta^{1\overline{2}}]\cup H_A^{1,1},$$ i.e., the $ABC$-Massey product $\mathcal{P}$ is not vanishing.

As a consequence, in both cases $\lambda_1=0$ and $\lambda_6=0$, the nilmanifolds endowed with non SKT invariant complex structures of Family $(I)$ admit a non vanishing $ABC$-Massey product by Lemma \ref{lemma:corrinvABC} and, hence, they are not geometrically-$BC$-formal.
\subsection{Family $(II)$}
Imposing the integrability of $J$, we obtain that $(d\eta^2)^{0,2}=0$ if and only if $C_3=C_5=C_6=0$, $C_4=B_2$, and $C_2=-B_3$. Moreover, $(d\eta^3)^{0,2}=0$ if and only if $D_2=D_5=D_8=0$, $D_7=D_3$, and $D_6=-D_4$. Hence, plugging these conditions in \eqref{eq:Jacobi_nil3}, we obtain the complex structure equations
\begin{align*}
\begin{cases}
d\eta^1=0,\\
d\eta^2=\frac{1}{2}(B_2 - iB_3)\eta^{13} + \frac{i}{2}(B_1 + iC_1)\eta^{1\overline{1}} + \frac{1}{2}(B_2 - iB_3)\eta^{1\overline{3}}\\
d\eta^3=\frac{i}{2}(A_1 +i D_1)\eta^{1\overline{1}} + \frac{i}{2}(D_3 +i D_4)\eta^{1\overline{2}} - \frac{i}{2}(D_3-i D_4)\eta^{2\overline{1}},
\end{cases}
\end{align*}
with coefficients satisfying
\begin{equation}\label{eq:Jacobi_nil3_3}
B_3D_3+B_2D_4=0.
\end{equation}
By setting
\[
\lambda_1=\frac{1}{2}(B_2-iB_3), \quad \lambda_2=\frac{i}{2}(B_1+iC_1), \quad \lambda_3=\frac{i}{2}(A_1+iD_1),\quad  \lambda_4=\frac{i}{2}(D_3+iD_4),
\]
the complex structure equations become
\begin{equation*}
\begin{cases}
d\eta^1=0,\\
d\eta^2=\lambda_1\eta^{13}+\lambda_2\eta^{1\overline{1}}+\lambda_1\eta^{1\overline{3}},\\
d\eta^3=\lambda_3\eta^{1\overline{1}}+\lambda_4\eta^{1\overline{2}}+\overline{\lambda}_4\eta^{2\overline{1}},
\end{cases}
\end{equation*}
with $\mathfrak{Re}(\lambda_1)\mathfrak{Re}(\lambda_4)+\mathfrak{Im}(\lambda_1)\mathfrak{Im}(\lambda_4)=0$.

Let us consider any Hermitian metric $g$ on $(\mathfrak{g},J)$ whose fundamental form $\omega$ has expression
\[
\omega=\frac{i}{2}\sum_{k=1}^3\omega_{k\overline{k}}\eta^{k\overline{k}}+\frac{1}{2}\sum_{1\leq j\leq k\leq 3}(\omega_{j\overline{k}}\eta^{j\overline{k}}-\overline{\omega}_{j\overline{k}}\eta^{k\overline{j}}),
\]
with $\{\omega_{j\overline{k}}\}_{j,k=1}^3$ a positive definite Hermitian matrix. By straightforward computations, it turns out that
\[
\del\delbar\omega=-\frac{i}{4}|\lambda_4|^2\omega_{3\overline{3}}\eta^{12\overline{12}}-\frac{i}{4}|\lambda_1|^2\omega_{2\overline{2}}\eta^{13\overline{13}}.
\]
Hence, the Hermitian metric $g$ is SKT if and only if $\lambda_1=0$ and $\lambda_4=0$. Since $(M,J)$ does not admit any SKT metrics, it holds that  $(\lambda_1,\lambda_4)\neq (0,0)$. We will consider two cases.

\subsubsection{Case $\lambda_4\neq 0$} We have that $\del\delbar\eta^{3\overline{3}}=-\frac{1}{2}|\lambda_4|^2\eta^{12\overline{12}}$ and the Bott-Chern cohomology classes $[\eta^{12}]\in H_{BC}^{2,0}(\mathfrak{g})$ and $[\eta^{\overline{12}}]\in H_{BC}^{0,2}(\mathfrak{g})$ are well defined. Then, the following $ABC$-Massey product on $(\mathfrak{g},J)$ is well defined
\[
\left\langle[\eta^{12}],[\eta^{\overline{12}}],[\eta^{\overline{12}}]\right\rangle_{ABC}=\left[\frac{2}{|\lambda_4|^2}\eta^{3\overline{3}\overline{12}}\right]_A\in\frac{H_A^{2,2}(\mathfrak{g})}{[\eta^{\overline{12}}]\cup H_A^{1,1}(\mathfrak{g})}.
\]
Moreover, $\text{Im}\,\delbar\restrict{\bigwedge^{1,2}\mathfrak{g}_\C^*}+\text{Im}\,\del\restrict{\bigwedge^{0,3}\mathfrak{g}_\C^*}\subset\text{Span}\langle\eta^{1\overline{123}},\eta^{2\overline{123}} \rangle$, hence $[\frac{2}{|\lambda_4|^2}\eta^{3\overline{123}}]_A\neq 0$ as an Aeppli cohomology class.

Suppose that $\left[\frac{2}{|\lambda_4|^2}\eta^{3\overline{123}}\right]_A\in [\eta^{\overline{12}}]\cup H_A^{1,1}(\mathfrak{g})$, i.e., there exist $\mu_j\in\C$, $R\in\bigwedge^{0,3}\mathfrak{g}_\C^*$, and $S\in\bigwedge^{1,2}\mathfrak{g}_\C^*$ such that
\begin{equation}\label{eq:ideal_nil3_2}
\frac{2}{|\lambda_4|^2}\eta^{3\overline{123}}=\sum_{j=1}^{h_A^{1,1}}\mu_j\eta^{\overline{12}}\wedge\xi^j+\del R+\delbar S,
\end{equation}
where $\{\xi^j\}_{j=1}^{h_A^{1,1}}$ is a basis of $\mathcal{H}_{\Delta_A}^{1,1}(\mathfrak{g})$. We take the usual extension to $(M,J)$ of the forms and the metric on $(\mathfrak{g},J)$. By multiplying by $\eta^{12}$ and integrating over $M$ each side of equation \eqref{eq:ideal_nil3_2}, we obtain
\[
0\neq \frac{2}{|\lambda_4|^2}\text{Vol}_M=\int_{M}\sum_{j=1}^{h_A^{1,1}}\mu_j\eta^{12\overline{12}}\wedge\xi^j+\int_M\del R\wedge\eta^{12}+\int_M\delbar S\wedge\eta^{12}.
\]
Since $d\eta^{12}=0$, the last two terms on the right hand side of the equation vanish by Stokes' theorem, yielding
\[
0\neq \int_{M}\sum_{j=1}^{h_A^{1,1}}\mu_j\eta^{12\overline{12}}\wedge\xi^j.
\]

Now, this is true if and only if there exists $j_0\in\{1,\dots, h_A^{1,1}\}$ such that $\xi^{j_0}=\alpha+A\eta^{3\overline{3}}\in\mathcal{H}_{\Delta_A}^{1,1}(\mathfrak{g})$, for some $\alpha\in\bigwedge^{1,1}\mathfrak{g}_\C^*$ and $0\neq A\in\C$. 
However, it is easy to see that $$\text{Ker}\del\delbar\restrict{\bigwedge^{1,1}\mathfrak{g}_{\C}^*}=\text{Span}\langle\eta^{1\overline{1}},\eta^{1\overline{2}},\eta^{1\overline{3}},\eta^{2\overline{1}},\eta^{2\overline{3}},\eta^{3\overline{1}},\eta^{3\overline{2}}\rangle$$ and the only non $\del\delbar$-closed $(1,1)$-forms on $\mathfrak{g}$ are $\del\delbar\eta^{2\overline{2}}=-\frac{1}{2}|\lambda_1|^2\eta^{13\overline{13}}$ and $\del\delbar\eta^{3\overline{3}}=-\frac{1}{2}|\lambda_4|^2\eta^{12\overline{12}}$. Then,
\[
0=\del\delbar(\xi^{j_0})=\del\delbar(\alpha)+A\del\delbar\eta^{3\overline{3}}\neq 0.
\] Therefore, we have obtained a contradiction and the $ABC$-Massey product
\[
\left\langle[\eta^{12}],[\eta^{\overline{12}}],[\eta^{\overline{12}}]\right\rangle_{ABC}
\]
is non vanishing on $(\mathfrak{g},J)$.

\subsubsection{Case $\lambda_4=0$} Since $(\lambda_1,\lambda_4)\neq (0,0)$, it holds that $\lambda
_1\neq 0$ and the structure equations become
\begin{equation}*
\begin{cases}
d\eta^1=0,\\
d\eta^2=\lambda_1\eta^{13}+\lambda_2\eta^{1\overline{1}}+\lambda_1\eta^{1\overline{3}},\\
d\eta^3=\lambda_3\eta^{1\overline{1}}.
\end{cases}
\end{equation}*
By rearranging the terms of the basis, it is easy to see that the complex structure $J$ on $\mathfrak{g}$ is nilpotent.

In this case, $\del\delbar\eta^{2\overline{2}}=-\frac{1}{2}|\lambda_1|^2\eta^{13\overline{13}}$ and the Bott-Chern cohomology classes $[\eta^{13}]\in H_{BC}^{2,0}(\mathfrak{g})$, $[\eta^{\overline{13}}]\in H_{BC}^{0,2}(\mathfrak{g})$ are well defined. Then, the $ABC$-Massey product on $\mathfrak{g}$ given by
\[
\left\langle[\eta^{13}],[\eta^{\overline{13}}],[\eta^{\overline{13}}]\right\rangle_{ABC}=\left[\frac{2}{|\lambda_1|^2}\eta^{2\overline{213}}\right]_A\in\frac{H_{A}^{1,3}(\mathfrak{g})}{[\eta^{\overline{13}}]\cup H_A^{1,1}(\mathfrak{g})},
\]
is well defined. Since $\text{Im}\,\delbar\restrict{\bigwedge^{1,2}\mathfrak{g}_\C^*}+\text{Im}\,\del\restrict{\bigwedge^{0,3}\mathfrak{g}_\C^*}\subset\text{Span}\langle\eta^{1\overline{123}},\eta^{2\overline{123}} \rangle$, we have that that $[\frac{2}{|\lambda_1|^2}\eta^{2\overline{213}}]\neq 0$ as an Aeppli cohomology class. By observing that the form $\eta^{13}$ is $d$-closed and $$\text{Ker}\,\del\delbar\restrict{\bigwedge^{1,1}\mathfrak{g}_{\C^*}}=\text{Span}\langle\eta^{1\overline{1}},\eta^{1\overline{2}},\eta^{1\overline{3}},\eta^{2\overline{1}},\eta^{2\overline{3}},\eta^{3\overline{1}},\eta^{3\overline{2}},\eta^{3\overline{3}}\rangle,$$
arguments analogous to the previous case yield that the $ABC$-Massey product
\[
\left\langle[\eta^{13}],[\eta^{\overline{13}}],[\eta^{\overline{13}}]\right\rangle_{ABC}
\]
is non vanishing on $(\mathfrak{g},J)$.

As a result, by Lemma \ref{lemma:corrinvABC} the nilmanifolds endowed with non SKT invariant complex structures of Family $(II)$ admit a non vanishing $ABC$-Massey product and, hence, they are not geometrically-$BC$-formal.
\end{proof}

\begin{rem}
If $(\Gamma\backslash G=M,J)$ is either a $6$-dimensional nilmanifold $M$ with an SKT invariant complex structure $J$ or, more in general, a $2n$-dimensional nilmanifold endowed with a SKT special complex structure, then from Theorem \ref{thm:SKT-geomBC-dim3} (see also \cite[Theorem 7.2]{SfeTom2}) and \cite[Theorem 7.4]{SfeTom2} we know that $(M,J)$ is geometrically-$BC$-formal. Since $J$ is nipotent, by \cite{Ang}, the Aeppli cohomology and the Bott-Chern cohomology of $(M,J)$ are computed, respectively, by the space of invariant Aeppli (respectively, Bott-Chern) harmonic forms with respect to a fixed invariant SKT metric $g$ on $(M,J)$. Moreover, the space $\mathcal{H}:=\mathcal{H}_{\Delta_A}(\mathfrak{g})+\mathcal{H}_{\Delta_{BC}}(\mathfrak{g})$ is closed under $\star$ and since \cite[Lemma 7.3]{SfeTom2} $\del\delbar\equiv 0$ on left-invariant forms on $G$, then $\del\delbar\restrict{\mathcal{H}}\equiv 0$. Hence, any invariant SKT is, in particular, an \emph{ABC-geometrically formal metric} in the sense of \cite{MS} and $(M,J)$ is \emph{weakly formal} in the sense of \cite{MS}.
\end{rem}

\section{SKT metrics and Bott-Cher-geometrically formal metrics in higher dimensions}\label{sec:4}
In this section we further study the relation between SKT metrics and geometrically-$BC$-formal metrics for nilmanifolds of complex dimension strictly higher than $3$. 

As proved in \cite{SfeTom2}, for the class of nilmanifolds endowed with special type complex structures, in any complex dimension the existence of a SKT metric implies the existence of a geometrically-$BC$-formal metric. In Proposition \ref{prop:geomBC_SKT}, we prove a partial converse of this result by showing that, if a nilmanifold endowed with a special type complex structure admits no SKT metric, then it does not admit any invariant geometrically-$BC$-formal metric. However, this does not suffice to show that there exist no (not necessarily invariant) geometrically-$BC$-formal metrics. Thus, in Theorem \ref{thm:obstruction}, we give explicit sufficient conditions on the Bott-Chern cohomology for the existence of a non vanishing $ABC$-Massey product; hence, under such conditions, there cannot exist geometrically-$BC$-formal metrics on such a class of manifolds.

Finally, in Proposition \ref{prop:no_sp_type_SKT} we show that by dropping the hypothesis of special type complex structure, we can construct a nilmanifold with invariant nilpotent complex structure which is SKT but admits a non trivial $ABC$-Massey product.

Let now $(M,J)$ be a $2n$-dimensional nilmanifold endowed by a complex structure of special type, see \eqref{eq:struct_eq}. Then, the complex structure $J$ is integrable and a $2$-step nilpotent complex structure \cite{CorFerGraUga} on $M$.  By \cite{Ang}, the Aeppli cohomology and the Bott-Chern cohomology of $(M,J)$ are computed via the complex of left-invariant forms on $G$. As a consequence, by Lemma \ref{lemma:corrinvABC}, $ABC$-Massey products on $(\mathfrak{g},J)$ corresponds bijectively to $ABC$-Massey products on $(M,J)$.

Let $g$ be an invariant Hermitian metric on $(M,J)$ and denote by
\[
\omega=\frac{i}{2}\sum_{j=1}^k\omega_{j\overline{j}}\eta^{j\overline{j}}+\frac{1}{2}\sum_{1\leq i< j \leq k}(\omega_{i\overline{j}}\,\eta^{i\overline{j}}-\overline{\omega}_{i\overline{j}}\,\eta^{j\overline{i}}).
\]
its fundamental form.

For this family of nilmanifolds, by structure equations \eqref{eq:struct_eq}, the metric $g$ is SKT if and only if $\del\delbar\eta^{k\overline{k}}=0$, i.e., the SKT condition does not depend on the choice of the Hermitian metric.

In that case, it was shown in \cite{SfeTom2} that if $J$ is such that every invariant Hermitian metric on $(M,J)$ is SKT, the manifold $(M,J)$ is geometrically-$BC$-formal, hence \cite{TarTom} every $ABC$-Massey product vanishes.
\begin{rem}\label{rmk:SKT_ABC_prod_J_special}
As a consequence of \cite[Theorem 7.4]{SfeTom2}, a necessary condition for the existence of non vanishing triple $ABC$-Massey products is that $(M,J)$ does not admit any SKT metric. 
\end{rem}

Furthermore, we observe the following result.
\begin{prop}\label{prop:geomBC_SKT}
For nilmanifolds endowed with complex structure of special type, if there exists an invariant geometrically-$BC$-formal metric, then it is also SKT.
\end{prop}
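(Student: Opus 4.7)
The plan is to extract the SKT condition directly from the Bott-Chern harmonicity of one explicit top product of $(1,0)$ and $(0,1)$ generators. Let $g$ be the invariant geometrically-$BC$-formal metric given by hypothesis. First, I would apply Gram-Schmidt to $\{\eta^1,\ldots,\eta^{k-1}\}$ and then orthonormalize $\eta^k$ against the resulting forms, replacing the coframe by a $g$-orthonormal coframe $\{\tilde\eta^1,\ldots,\tilde\eta^k\}$ of $(1,0)$-forms. Since $\tilde\eta^j\in\Span_{\C}\langle\eta^1,\ldots,\eta^j\rangle$ for $j<k$, each such $\tilde\eta^j$ remains $d$-closed; writing $\tilde\eta^k=a\,\eta^k+\sum_{j<k}b_j\,\tilde\eta^j$ with $a\neq 0$ yields $d\tilde\eta^k=a\,d\eta^k\in\Span_{\C}\langle\tilde\eta^{ij},\tilde\eta^{i\bar j}\rangle_{i,j<k}$, so the special-type structure \eqref{eq:struct_eq} is preserved. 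In this coframe the fundamental form is $\omega=\frac{i}{2}\sum_{j=1}^k\tilde\eta^{j\overline{j}}$.

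Second, for each $j\in\{1,\ldots,k-1\}$ both $\tilde\eta^j$ and $\tilde\eta^{\overline{j}}$ are automatically Bott-Chern harmonic for bidegree reasons: they are $d$-closed, and $\ast_g\tilde\eta^j$ has bidegree $(k,k-1)$, so $\del\delbar\ast_g\tilde\eta^j$ would live in bidegree $(k+1,k)=0$ (and symmetrically for $\tilde\eta^{\overline{j}}$). Geometric-$BC$-formality of $g$ then forces the wedge product
\[
\Omega:=\tilde\eta^{1\ldots(k-1)\overline{1\ldots(k-1)}}
\]
to be a Bott-Chern harmonic $(k-1,k-1)$-form on $(M,J)$.

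The third and key step is the Hodge-dual computation: since $\{\tilde\eta^j\}$ is $g$-orthonormal, $\ast_g\Omega=c\,\tilde\eta^{k\overline{k}}$ for a nonzero constant $c$ depending only on $k$, because $\tilde\eta^{k\overline{k}}$ is the unique $(1,1)$ monomial whose index sets are the complements of those of $\Omega$. Bott-Chern harmonicity of $\Omega$ then gives $0=\del\delbar\ast_g\Omega=c\,\del\delbar\tilde\eta^{k\overline{k}}$, so $\del\delbar\tilde\eta^{k\overline{k}}=0$. Finally, $\tilde\eta^{j\overline{j}}$ is $d$-closed for every $j<k$, so
\[
\del\delbar\omega=\frac{i}{2}\sum_{j=1}^k\del\delbar\tilde\eta^{j\overline{j}}=\frac{i}{2}\del\delbar\tilde\eta^{k\overline{k}}=0,
\]
which is exactly the SKT condition for $g$. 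The only technical points are checking that the orthonormalization preserves the special-type shape of the structure equations in Step~1, and the identification $\ast_g\Omega=c\,\tilde\eta^{k\overline{k}}$ in Step~3; both are routine, so I do not expect any serious obstacle.
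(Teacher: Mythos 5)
Your proposal is correct, and it rests on the same mechanism as the paper's proof: the $d$-closed generators $\tilde\eta^j,\tilde\eta^{\overline{j}}$ ($j<k$) are Bott-Chern harmonic for bidegree reasons, and Hodge duality transfers harmonicity of their products to the condition $\del\delbar\tilde\eta^{k\overline{k}}=0$. The execution differs slightly: the paper argues by contraposition, picking a single nonzero coefficient $L_{r_0s_0\overline{u}_0\overline{v}_0}$ of $\del\delbar\eta^{k\overline{k}}$ and showing that the corresponding product $\eta^{r_0s_0}\wedge\eta^{\overline{u}_0\overline{v}_0}$ of two Bott-Chern harmonic forms fails to be Bott-Chern harmonic, whereas you argue directly with the single top product $\Omega=\tilde\eta^{1\dots(k-1)\overline{1\dots(k-1)}}$, whose Hodge dual is exactly $c\,\tilde\eta^{k\overline{k}}$ and therefore captures all the coefficients of $\del\delbar\tilde\eta^{k\overline{k}}$ at once. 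Your variant is marginally cleaner (one wedge product instead of a case-by-case choice, and no contradiction), while the paper's version has the small advantage of exhibiting a failure of formality already in bidegree $(2,2)$; both are complete.
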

\begin{proof}
Let us fix $\{\eta^1,\dots,\eta^k\}$ a coframe of invariant $(1,0)$-forms on $(M,J)$ satisfying \eqref{eq:struct_eq}. By Gram-Schmidt, we can assume that the coframe $\{\eta^1,\dots, \eta^k\}$ is unitary and still satisfy structure equations of type \eqref{eq:struct_eq}. In this situation, the fundamental form of $g$ can be written as $\omega=\frac{i}{2}\sum_{j=1}^k\eta^{j\overline{j}}$. Let us assume that $g$ is not SKT, i.e.,
\[
0\neq \del\delbar\eta^{k\overline{k}}=\sum_{1\leq r,u<s,v\leq k-1 }L_{rs\overline{u}\overline{v}}\eta^{rs\overline{u}\overline{v}}.
\]
We can then choose $r_0,s_0,u_0.v_0$ such that $L_{r_0s_0\overline{u}_0\overline{v}_0}\neq 0$. By structure equations and bidegree reasons, the forms $\alpha:=\eta^{r_0s_0}$ and $\beta:=\eta^{\overline{u}_0\overline{v}_0}$ are both  $d$-closed and $\del\delbar\ast$-closed, so they are Bott-Chern harmonic. We check whether this holds also for the wedge product $\alpha\wedge\beta$. By Leibniz rule, the form $\alpha\wedge\beta$ is $d$-closed. However, $\ast(\alpha\wedge\beta)=\varepsilon\eta^{1\dots r_0^* \dots s_0^* \dots k \overline{1}\dots \overline{u}_0^* \dots \overline{v}_0^*\dots \overline{k}}$, with $\varepsilon$ a sign constant, so that
\[
\del\delbar\ast(\alpha\wedge\beta)=(-1)^{k-3}\varepsilon\eta^{1\dots r_0^* \dots s_0^* \dots k-1 \overline{1}\dots \overline{u}_0^* \dots \overline{v}_0^*\dots \overline{k-1}}\wedge\del\delbar(\eta^{k\overline{k}})=\varepsilon'L_{r_0s_0\overline{u}_0\overline{v}_0}\eta^{1\dots k-1 \overline{1}\dots \overline{k-1}}\neq 0,
\]
with $\varepsilon'$ a sign constant. As a consequence, the form $\alpha\wedge\beta$ is not Bott-Chern harmonic, i.e., the invariant metric $g$ is not geometrically-$BC$-formal.
\end{proof}
We now prove a sufficient condition for the existence of a non trivial $ABC$-Massey product on nilmanifolds endowed with a special type complex structure.

Recall that, in our notation, a real $2k$-dimensional nilmanifold $M=\Gamma\backslash G$ admits a special type complex structure $J$ if it admits a coframe of invariant $(1,0)$-forms $\{\eta^1,\dots, \eta^k\}$ such that
\begin{equation}\label{eq:struct_eq_spec_type}
\begin{cases}
d\eta^j=0, \quad j\in \{1,\dots,k-1\}\\
d\eta^k\in\text{Span}_{\C}\langle \eta^{ij}, \eta^{i\overline{j}}\rangle_{i,j=1,\dots, k-1}
\end{cases}.
\end{equation}
Let $g$ be a fixed invariant Hermitian metric on $(M,J)$ (which we can always assume to be diagonal) and let $\omega=\frac{i}{2}\sum_{j=1}^k\eta^{j\overline{j}}$ be its fundamental form.

By Remark \ref{rmk:SKT_ABC_prod_J_special}, we assume that $(M,J)$ is not SKT, i.e., $\del\delbar\eta^{k\overline{k}}\neq 0$. In this case, the $\del\delbar$-operator on invariant forms is not zero only on $\mathcal{I}(\eta^{k\overline{k}})$. Moreover, structure equations \eqref{eq:struct_eq_spec_type} imply that
\begin{equation}\label{eq:im_deldelbar}
\im\del\delbar\subset \textstyle\bigwedge^{\bullet,\bullet}\left(\text{Span}_\C\langle \eta^{1},\dots, \eta^{k-1}, \eta^{\overline{1}}, \dots ,\eta^{\overline{k-1}}
\rangle \right).
\end{equation}
We recall that special type complex structures are in particular nilpotent, hence \cite[Theorem 2.8]{Ang} implies that the Bott-Chern and Aeppli cohomology of $(M,J)$ are computed via the complex of left-invariant forms on $G$, or equivalently, they are isomorphic to the Aeppli and Bott-Chern cohomology of the Lie algebra $\mathfrak{g}$ of $G$. In particular, by further exploiting the structure equations \eqref{eq:struct_eq_spec_type}, straightforward computations allow us to describe the structures of Bott-Chern and Aeppli cohomologies of $(M,J)$.
\begin{lemma}\label{lemma:BCcohom}
The Bott-Chern cohomology of $(M,J)$ of any bidegree $(p,q)$ admits the following decomposition
\begin{equation*}
H_{BC}^{p,q}(M)\cong \frac{H_1}{I_1}\oplus H_2 \oplus H_3,
\end{equation*}
where
\begin{align*}
H_1&=\Span_{\C} \langle \eta^{i_1\dots i_{p}}\wedge\eta^{\overline{j}_1\dots \overline{j}_{q}}\rangle_{i_l,j_l\ne k}\\
I_1&=\im\del\delbar\left(\Span_{\C} \langle\eta^{k\overline{k}}\wedge\eta^{i_1\dots i_{p-2}}\wedge\eta^{\overline{j}_1\dots \overline{j}_{q-2}}\rangle\right)\\
H_{2}&=\Ker d\cap \left(\Span_{\C}\langle  \eta^k\wedge\eta^{i_1\dots i_{p-1}}\wedge\eta^{\overline{j}_1 \dots \overline{j}_{q}}\rangle\oplus \Span_{\C}\langle \eta^{\overline{k}}\wedge\eta^{i_1\dots i_{p}}\wedge\eta^{\overline{j}_1 \dots \overline{j}_{q-1}} \rangle\right)\\
H_3 &= \Ker d\cap \Span_{\C}\langle \eta^{k\overline{k}}\wedge \eta^{i_1\dots i_{p-1}}\wedge\eta^{\overline{j}_1\dots \overline{j}_{q-1}}\rangle.
\end{align*}
In particular, if either $p<2$ or $q<2$, we have that
\[
H_{BC}^{p,q}(M)\cong H_1\oplus H_2 \oplus H_3.
\]
\end{lemma}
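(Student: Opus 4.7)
The plan is to exploit the nilpotency of $J$ to pass to the Lie algebra level via \cite[Theorem 2.8]{Ang}, so that $H_{BC}^{p,q}(M)$ is computed as $(\Ker d\cap \mathcal{A}^{p,q}(\mathfrak{g}))/\im\del\delbar$. Every invariant $(p,q)$-form admits a unique decomposition $\alpha=\alpha_{1}+\alpha_{2}^{1,0}+\alpha_{2}^{0,1}+\alpha_{3}$ according to whether its monomials contain neither, only $\eta^{k}$, only $\eta^{\overline{k}}$, or both, giving a splitting $\mathcal{A}^{p,q}(\mathfrak{g})=V_{1}^{p,q}\oplus V_{2}^{p,q,1,0}\oplus V_{2}^{p,q,0,1}\oplus V_{3}^{p,q}$ whose summands match exactly those appearing in the statement.

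The first step is to track how $d$ interacts with the splitting. Since every generator other than $\eta^{k},\eta^{\overline{k}}$ is $d$-closed and $d\eta^{k},d\eta^{\overline{k}}\in V_{1}$ by \eqref{eq:struct_eq_spec_type}, a direct Leibniz computation gives
\[
d(V_{1})=0,\qquad d(V_{2}^{1,0}\oplus V_{2}^{0,1})\subset V_{1},\qquad d(V_{3})\subset V_{2}^{1,0}\oplus V_{2}^{0,1}.
\]
These three target ranges lie in pairwise distinct summands of the decomposition of $\mathcal{A}^{p+1,q}\oplus \mathcal{A}^{p,q+1}$, so projecting $d\alpha=0$ onto the $V_{1}$, $V_{2}^{1,0}\oplus V_{2}^{0,1}$, and $V_{3}$ components forces $\alpha_{2}^{1,0}+\alpha_{2}^{0,1}\in\Ker d$ and $\alpha_{3}\in\Ker d$ independently, while $\alpha_{1}$ is in $\Ker d$ automatically. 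This yields
\[
\Ker d\cap \mathcal{A}^{p,q}(\mathfrak{g})=V_{1}^{p,q}\oplus\bigl(\Ker d\cap(V_{2}^{p,q,1,0}\oplus V_{2}^{p,q,0,1})\bigr)\oplus\bigl(\Ker d\cap V_{3}^{p,q}\bigr),
\]
already producing the $H_{2}$ and $H_{3}$ summands.

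The second step is to identify $\im\del\delbar$ on invariant forms. The identities $\del\delbar\eta^{k}=\del(\delbar\eta^{k})=0$ and $\del\delbar\eta^{\overline{k}}=0$ hold because $\delbar\eta^{k},\del\eta^{\overline{k}}\in V_{1}$ are $d$-closed; combined with the Leibniz rule and $d\beta=0$ for $\beta\in V_{1}$, this forces $\del\delbar$ to vanish on $V_{1}$, $V_{2}^{1,0}$, and $V_{2}^{0,1}$. The only non-trivial contribution comes from $V_{3}$, and the explicit Leibniz expansion
\[
\del\delbar(\eta^{k\overline{k}}\wedge\delta)=-\delbar\eta^{k}\wedge\del\eta^{\overline{k}}\wedge\delta-\del\eta^{k}\wedge\delbar\eta^{\overline{k}}\wedge\delta\in V_{1}^{p,q}
\]
for $\delta\in V_{1}^{p-2,q-2}$ confirms both \eqref{eq:im_deldelbar} and the identification $\im\del\delbar\cap\mathcal{A}^{p,q}=I_{1}$. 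Quotienting the first step by this $V_{1}$-summand yields $H_{1}/I_{1}\oplus H_{2}\oplus H_{3}$, and the final assertion follows at once: whenever $p<2$ or $q<2$, no invariant monomial containing $\eta^{k\overline{k}}$ has bidegree $(p-1,q-1)$, so $I_{1}=0$ and $H_{1}/I_{1}=H_{1}$.

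The only non-routine point is the projection argument in the first step; the pairwise distinctness of the target summands of $d$ makes it clean, and every remaining verification is a direct consequence of \eqref{eq:struct_eq_spec_type} together with the fact that the generators $\eta^{1},\dots,\eta^{k-1},\eta^{\overline{1}},\dots,\eta^{\overline{k-1}}$ generate a subcomplex on which $d$ is identically zero.
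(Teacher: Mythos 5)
Your proof is correct and follows essentially the same route as the paper: reduce to invariant forms via \cite[Theorem 2.8]{Ang}, split $\Ker d\cap\bigwedge^{p,q}\mathfrak{g}_\C^*$ according to the presence of $\eta^k$ and $\eta^{\overline{k}}$, identify $\im\del\delbar$ with $I_1$ inside the $\eta^k,\eta^{\overline{k}}$-free summand, and note that $I_1$ meets neither $H_2$ nor $H_3$ (you simply spell out the Leibniz-rule verifications the paper leaves implicit). The only blemish is a sign in your displayed formula, which should read $\del\delbar(\eta^{k\overline{k}}\wedge\delta)=\delbar\eta^k\wedge\del\eta^{\overline{k}}\wedge\delta-\del\eta^k\wedge\delbar\eta^{\overline{k}}\wedge\delta$, but this is immaterial to the conclusion that the image lies in $V_1^{p,q}$.
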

\begin{proof}
By \cite[Theorem 2.8]{Ang}, we have that $H_{BC}^{p,q}(M)\cong H_{BC}^{p,q}(\mathfrak{g})$, where \[
H_{BC}^{p,q}(\mathfrak{g})=\frac{\text{Ker}(d\colon \bigwedge^{p,q}\mathfrak{g}_\C^*\rightarrow\bigwedge^{p+1,q}\mathfrak{g}_\C^*\oplus \bigwedge^{p,q+1}\mathfrak{g}_\C^*)}{\text{Im}(\del\delbar\colon\bigwedge^{p-1,q-1}\mathfrak{g}_\C^*\rightarrow \bigwedge^{p,q}\mathfrak{g}_\C^*)}.
\]
But then the following decomposition holds
\[
\text{Ker}(\textstyle d\colon \bigwedge^{p,q}\mathfrak{g}_\C^*\rightarrow\bigwedge^{p+1,q}\mathfrak{g}_\C^*\oplus \bigwedge^{p,q+1}\mathfrak{g}_\C^*)=H_1\oplus H_2\oplus H_3.
\]
By \eqref{eq:im_deldelbar}, we have that
\[
\text{Im}(\del\delbar\colon\textstyle\bigwedge^{p-1,q-1}\mathfrak{g}_\C^*\rightarrow \bigwedge^{p,q}\mathfrak{g}_\C^*)=I_1.
\]
Then, since $I_1\cap H_2 =\{0\}$ and $I_1\cap H_3=\{0\}$, we can conclude that
\[
H_{BC}^{p,q}(\mathfrak{g})=\frac{H_1\oplus H_2\oplus H_3}{I_1}\cong \frac{H_1}{I_1}\oplus H_2\oplus H_3.
\]
Note that if either $p<2$ or $q<2$, then $I_1=\{0\}$, which concludes the proof.
\end{proof}
\begin{lemma}\label{lemma:aep_cohom}
The Aeppli cohomology of $(M,J)$ of any bidegree $(p,q)$ admits the following decomposition
\begin{equation*}
H_{A}^{p,q}(M)\cong \frac{K_1}{L_1}\oplus \frac{K_2}{L_2} \oplus K_3,
\end{equation*}
where
\begin{align*}
K_1=&\Span_{\C}\langle\eta^{i_1\dots i_p\overline{j}_1\dots \overline{j}_{q}}\rangle_{i_l,j_l\neq k}\\
L_1=&\im\del\left(\Span_{\C}\langle\eta^{k i_{1}\dots i_{p-2}\overline{j}_1\dots \overline{j}_{q}}\rangle\right) \oplus \im\delbar\left(\Span_{\C}\langle\eta^{k i_{1}\dots i_{p-1}\overline{j}_1\dots \overline{j}_{q-1}}\rangle\right)\\
&\oplus \im\del\left(\Span_{\C}\langle\eta^{\overline{k} i_{1}\dots i_{p-1}\overline{j}_1\dots \overline{j}_{q-1}}\rangle\right) \oplus \im\delbar\left(\Span_{\C}\langle\eta^{\overline{k} i_{1}\dots i_{p}\overline{j}_1\dots \overline{j}_{q-1}}\rangle\right) \\
K_2=&\Span_{\C}\langle\eta^{k i_{1}\dots i_{p-1}\overline{j}_{1}\dots \overline{j}_{q}},\eta^{\overline{k}i_1\dots i_p\overline{j}_1\dots \overline{j}_{q-1}}\rangle\\
L_2=&\im\del\left(\Span_{\C}\langle\eta^{k\overline{k} i_{1}\dots i_{p-2}\overline{j}_1\dots \overline{j}_{q-1}}\rangle\right)\oplus \im\delbar\left(\Span_{\C}\langle\eta^{k\overline{k}i_{1}\dots i_{p-1}\overline{j}_{1}\dots \overline{j}_{q-2}}\rangle\right)\\
K_3=&\Ker\del\delbar\cap\Span_{\C}\langle \eta^{k\overline{k}i_1\dots i_{p-1}\overline{j}_1\dots \overline{j}_{q-1}}\rangle.
\end{align*}
\begin{proof}
The proof is analogous to the proof of Lemma \ref{lemma:BCcohom}.
\end{proof}
\end{lemma}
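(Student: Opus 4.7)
The plan is to mimic the strategy used in the proof of Lemma~\ref{lemma:BCcohom}, with the quotient by $\im\del\delbar$ replaced by the larger quotient $\im\del + \im\delbar$. First I would invoke \cite[Theorem~2.8]{Ang} (applicable since special type complex structures are nilpotent) to identify
\[
H_A^{p,q}(M) \cong H_A^{p,q}(\mathfrak{g}) = \frac{\ker\bigl(\del\delbar\colon \textstyle\bigwedge^{p,q}\mathfrak{g}_\C^* \to \textstyle\bigwedge^{p+1,q+1}\mathfrak{g}_\C^*\bigr)}{\im\del + \im\delbar},
\]
and then split $\bigwedge^{p,q}\mathfrak{g}_\C^* = W_1 \oplus W_2 \oplus W_3$ according to whether a basis monomial contains neither, exactly one, or both of $\eta^k$ and $\eta^{\bar k}$; these summands coincide with the ambient spaces of $K_1$, $K_2$, $K_3$ in the statement.

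Using the structure equations \eqref{eq:struct_eq_spec_type}, one checks immediately that $\del\delbar\eta^k = \del\delbar\eta^{\bar k} = 0$: indeed $\delbar\eta^k$ is the $(1,1)$-part of $d\eta^k$, a combination of the $d$-closed forms $\eta^{i\bar j}$ with $i,j<k$, so $\del(\delbar\eta^k)=0$, and symmetrically for $\eta^{\bar k}$. By the Leibniz rule this implies that $\del\delbar$ annihilates all of $W_1 \oplus W_2$, so
\[
\ker\del\delbar \;=\; W_1 \oplus W_2 \oplus \bigl(\ker\del\delbar \cap W_3\bigr) \;=\; K_1 \oplus K_2 \oplus K_3.
\]

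The core step is the analysis of $\im\del + \im\delbar$ inside each $W_i$. First, $W_3$ receives no contribution: for a monomial $\gamma \in W_1$ one has $d\gamma = 0$, hence both $\del(\eta^{k\bar k}\wedge\gamma) = \del(\eta^{k\bar k})\wedge\gamma$ and $\delbar(\eta^{k\bar k}\wedge\gamma) = \delbar(\eta^{k\bar k})\wedge\gamma$ lie in $W_2$, since $\del\eta^{k\bar k} = \del\eta^k\wedge\eta^{\bar k} - \eta^k\wedge\del\eta^{\bar k}$ contains exactly one of $\eta^k,\eta^{\bar k}$ in each summand, and symmetrically for $\delbar\eta^{k\bar k}$. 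The same computation furnishes the two summands of $L_2 \subset W_2$. Finally, for $\gamma' \in W_1$ the forms $\del(\eta^k\wedge\gamma') = \del\eta^k\wedge\gamma'$, $\delbar(\eta^k\wedge\gamma') = \delbar\eta^k\wedge\gamma'$, and the two analogues with $\eta^{\bar k}$ in place of $\eta^k$, all land in $W_1$ and yield exactly the four summands of $L_1$. Combining these,
\[
H_A^{p,q}(\mathfrak{g}) \cong \frac{K_1}{L_1} \oplus \frac{K_2}{L_2} \oplus K_3.
\]

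The main technical obstacle is the bookkeeping needed to verify that $\im\del + \im\delbar$ decomposes cleanly along $W_1 \oplus W_2 \oplus W_3$, with no cross-terms merging the summands after quotienting. This decoupling ultimately rests on the two facts that $W_1$ is pointwise $d$-closed and that $d\eta^k$, $d\eta^{\bar k}$ contain no $\eta^k$ or $\eta^{\bar k}$, which together let one track the $\eta^k,\eta^{\bar k}$-multiplicities through each application of $\del$ or $\delbar$.
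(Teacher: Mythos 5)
Your proposal is correct and is exactly the argument the paper intends: the paper's proof consists only of the remark that it is analogous to Lemma \ref{lemma:BCcohom}, and you carry out precisely that analogy — reduction to invariant forms via \cite[Theorem 2.8]{Ang}, the three-way splitting by $\eta^k,\eta^{\overline{k}}$-content, and the observation that $\im\del+\im\delbar$ respects this splitting because $W_1$ is $d$-closed and $d\eta^k$, $d\eta^{\overline{k}}$ involve neither $\eta^k$ nor $\eta^{\overline{k}}$.
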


We will now work at the level of the Lie algebra $\mathfrak{g}$ of the universal cover $G$ of $M$. We note that  $\mathfrak{g}$ is endowed with a special type complex structure inherited by $(M,J)$, which we will still denote  by $J$. Therefore, there exists a basis $\{\eta^1,\dots,\eta^k\}$ of $(1,0)$-forms on $(\mathfrak{g},J)$ satisfying \eqref{eq:struct_eq_spec_type} and the fixed invariant Hermitian metric $g$ on $(M,J)$ descends to a Hermitian metric on $(\mathfrak{g},J)$ with fundamental form $\omega=\frac{i}{2}\sum_{j=1}^k\eta^{j\overline{j}}$. 

Let us then consider a $ABC$-Massey product $\mathcal{P}$ on $(\mathfrak{g},J)$ given by the following expression
$$
\mathcal{P}=\Bigl\langle[\alpha], [\beta], [\beta]\Bigr\rangle_{ABC},
$$
where $[\alpha]\in H_{BC}^{p,q}(\mathfrak{g})$, $[\beta]\in H_{BC}^{r,s}(\mathfrak{g})$. We assume that the representatives $\alpha$ and $\beta$ satisfy the following properties
\[
\alpha= \eta^{i_1\dots i_p}\wedge\eta^{\overline{j}_1\dots \overline{j}_q}, \qquad \beta= \eta^{l_1\dots l_r}\wedge\eta^{\overline{m}_1\dots \overline{m}_s}, \qquad \alpha\wedge\beta\neq 0.
\]
In order for $\mathcal{P}$ to be a well defined non trivial $ABC$-Massey product, by \eqref{eq:im_deldelbar}, it must hold that
\begin{itemize}
\item
$\alpha, \beta\in H_1$ of Lemma \ref{lemma:BCcohom}, i.e., $$\alpha\in\textstyle \bigwedge^{p,q}\left(\text{Span}_\C\langle \eta^{1}, \dots, \eta^{k-1}, \eta^{\overline{1}}, \dots, \eta^{\overline{k-1}} \rangle\right), \quad \beta\in\bigwedge^{r,s}\left(\text{Span}_\C\langle \eta^{1}, \dots, \eta^{k-1}, \eta^{\overline{1}}, \dots, \eta^{\overline{k-1}} \rangle\right), $$
\item there exists a form $f=\eta^{k\overline{k}}\wedge\tilde{\gamma}\in \bigwedge^{p+r-1, q+s-1}\mathfrak{g}_\C^*$ with $\tilde{\gamma}\in\bigwedge^{p+r-2,q+s-2}\mathfrak{g}_\C^*$ such that
\[
(-1)^{p+q}\del\delbar f=\alpha\wedge\beta.
\]
\end{itemize}
We assume that $\tilde{\gamma}$ can be written as $\tilde{\gamma}:=\eta^{t_1\dots t_{p+r-2}}\wedge\eta^{\overline{v}_1\dots \overline{v}_{q+s-2}}$ and, up to a constant, it is the only $(p+r-2,q+s-2)$-form $\varphi$ which satisfies
$$
0\neq \del\delbar(\eta^{k\overline{k}})\wedge\varphi\in \Span_{\C}\langle\alpha\wedge\beta\rangle.
$$
Then the $ABC$-Massey product $\mathcal{P}$ is represented, up to constant, by the Aeppli cohomology class
\[
[\eta^{k\overline{k}}\wedge\tilde{\gamma}\wedge \beta]\in H_{A}^{p+2r-1, q+2s-1}(\mathfrak{g}).
\]
Note that since holds $\ast(\eta^{k\overline{k}}\wedge\tilde{\gamma}\wedge \beta)\not\in \mathcal{I}(\eta^k,\overline{\eta}^k)$, by structure equations $\ast(\eta^{k\overline{k}}\wedge\tilde{\gamma}\wedge \beta)$ is $\del$-closed and $\delbar$-closed; hence, the form $\eta^{k\overline{k}}\wedge\tilde{\gamma}\wedge \beta$ is Aeppli harmonic and, as a Aeppli cohomology class, $[\eta^{k\overline{k}}\wedge\tilde{\gamma}\wedge \beta]_A\neq 0$.

As a $ABC$-Massey product, $\mathcal{P}$ is non vanishing if and only if $[\eta^{k\overline{k}}\wedge\tilde{\gamma}\wedge \beta]_A\notin \mathcal{J}$, where $\mathcal{J}$ is the ideal
\[
[\alpha]_{BC}\cup H_A^{2r-1,2s-1}(\mathfrak{g})+[\beta]_{BC}\cup H_A^{p+r-1 ,q+s-1}(\mathfrak{g}).
\] 
Let us suppose by contradiction that this is the case, i.e., there exists $\lambda_j, \mu_l\in\C$ and $R\in\bigwedge^{p+2r-2,q+2s-1}\mathfrak{g}_\C^*$, $S\in\bigwedge^{p+2r-1, q+2s-2}\mathfrak{g}_\C^*$ such that
\begin{equation}\label{eq:ideal}
\eta^{k\overline{k}}\wedge\tilde{\gamma}\wedge \beta= \sum_{j=1}^{h_A^{2r-1, 2s-1}}\lambda_j\, \alpha \wedge\xi^j + \sum_{l=1}
^{h_A^{p+r-1,q+s-1}} \mu_l\,\beta \wedge\psi^l + \del R + \delbar S,
\end{equation}
with $\{\xi^j\}_{j=1}^{h_A^{2r-1,2s-1}}$ and $\{\psi^l\}_{l=1}^{h_A^{p+r-1,q+s-1}}$ bases of $\mathcal{H}_A^{2r-1,2s-1}(\mathfrak{g})$ and, respectively, of $\mathcal{H}_A^{p+r-1,q+s-1}(\mathfrak{g})$.

By taking the $L^2$-product of equation \eqref{eq:ideal} with the left hand side, we obtain
\begin{align*}
0\neq ||\eta^{k\overline{k}}\wedge\tilde{\gamma}\wedge \beta||_{L^2}^2 &= \int_M \left(\sum_j \lambda_j\alpha\wedge\xi^j\right)\wedge\ast(\eta^{k\overline{k}}\wedge\tilde{\gamma}\wedge \beta)\\
&+\int_M \left(\sum_l \mu_l\beta\wedge\psi^l\right)\wedge\ast(\eta^{k\overline{k}}\wedge\tilde{\gamma}\wedge \beta)\\
&+\int_M \del R\wedge\ast (\eta^{k\overline{k}}\wedge\tilde{\gamma}\wedge \beta) + \int_M \delbar S \wedge \ast(\eta^{k\overline{k}}\wedge\tilde{\gamma}\wedge \beta).
\end{align*}
Being $\ast\eta^{k\overline{k}}\wedge\tilde{\gamma}\wedge\beta$ a $d$-closed form, by Stokes' theorem the last two terms vanish, hence yielding
\[
0 \neq \int_M \left(\sum_j \lambda_j\alpha\wedge\xi^j\right)\wedge\ast(\eta^{k\overline{k}}\wedge\tilde{\gamma}\wedge \beta) + \int_M \left(\sum_l \mu_l\beta\wedge\psi^l\right)\wedge\ast(\eta^{k\overline{k}}\wedge\tilde{\gamma}\wedge \beta).
\]
Then, it must exist $j_0\in\{1,\dots, h_A^{2r-1,2s-1}\}$ (or $l_0\in\{1,\dots, h_A^{p+r-1,q+s-1}\}$), such that either $$\alpha\wedge\xi^{j_0}\wedge\ast(\eta^{k\overline{k}}\wedge\tilde{\gamma}\wedge\beta)\neq 0$$ or $$\beta\wedge\psi^{l_0}\wedge\ast(\eta^{k\overline{k}}\wedge\tilde{\gamma}\wedge\beta)\neq 0.$$

Suppose the first case holds. By definition of Hodge $\ast$-operator, this is equivalent to
\[
g(\alpha\wedge\xi^{j_0},\eta^{k\overline{k}}\wedge\tilde{\gamma}\wedge\beta)\neq 0
\]
Since the product $g$ is diagonal with respect to the basis $\{\eta^1,\dots, \eta^k\}$, by Lemma \ref{lemma:aep_cohom} this forces $\xi^{j_0}\in K_3$, i.e., $\xi^{j_0}=\eta^{k\overline{k}}\wedge\tilde{\omega}$, with $\tilde{\omega}\in \bigwedge^{2r-2,2s-2}\mathfrak{g}_\C^*$, which yields
\[
g(\eta^{k\overline{k}}\wedge\alpha\wedge\tilde{\omega},\eta^{k\overline{k}}\wedge\tilde{\gamma}\wedge\beta) \neq 0.
\]
Since $\alpha\wedge\beta\neq 0$ and $\alpha\not\in\mathcal{I}(\eta^k,\overline{\eta}^k)$, then $\gamma=\alpha\wedge\zeta$, for some $\zeta\in\bigwedge^{r-2,s-2}\mathfrak{g}_\C^*$. Note that if either $r<2$ or $s<2$, we obtain a contradiction. Let us assume the opposite, i.e., $r,s\geq 2$. Then,
\[
(-1)^{p+q}\alpha\wedge\beta=\del\delbar(\eta^{k\overline{k}})\wedge \tilde{\gamma}=\del\delbar(\eta^{k\overline{k}})\wedge\alpha\wedge\zeta,
\]
which implies that $\beta=\del\delbar\left((-1)^{p+q+1}\eta^{k\overline{k}}\wedge\zeta\right)$, which leads to a contradiction, since $\beta$ is a Bott-Chern harmonic form, i.e., $\beta$ cannot be $\del\delbar$-exact.

On the other hand, let us assume that $\beta\wedge\psi^{l_0}\wedge\ast(\eta^{k\overline{k}}\wedge\tilde{\gamma}\wedge\beta)\neq 0$, which is equivalent to
\[
g(\beta\wedge\psi^{l_0},\eta^{k\overline{k}}\wedge\tilde{\gamma}\wedge\beta) \neq 0.
\]
Since $\beta$ does not contain $\eta^{k\overline{k}}$, again by Lemma \ref{lemma:aep_cohom} the form $\psi^{l_0}\in K_3$, i.e., $\psi^{l_0}=\eta^{k\overline{k}}\wedge \tilde{\omega}$, with $\tilde{\omega}=\tilde{\omega}_1+\dots +\tilde{\omega}_d\in\bigwedge^{p+r-2,q+s-2}\mathfrak{g}_{\C}^*$ and each form $\tilde{\omega_i}=C_i\eta^{I\overline{J}}$ for $C_i\in\C$ and $I,J\subset\{1,\dots, k\}$, $|I|=p+r-2, |J|=q+s-2$. We obtain
\[
0 \neq g(\beta\wedge\eta^{k\overline{k}}\wedge\tilde{\omega},\eta^{k\overline{k}}\wedge\tilde{\gamma}\wedge\beta)=(-1)^{|\beta|\cdot |\tilde{\omega}|}g(\tilde{\omega},\tilde{\gamma})
\]
which then implies that there exists $1 \leq d_0\leq d$ and $0 \neq C\in\C$ such that $\tilde{\omega}_{d_0}=C\tilde{\gamma}$. So, we have that
\[
0=\del\delbar\psi^{l_0}=\del\delbar(\eta^{k\overline{k}})\wedge(\tilde{\omega}_1+\dots +\tilde{\omega}_d-\tilde{\omega}_{d_0})+\del\delbar(\eta^{k\overline{k}})\wedge \tilde{\omega}_{d_0}=\del\delbar(\eta^{k\overline{k}})\wedge (\tilde{\omega}-\tilde{\omega}_{d_0})+(-1)^{p+q}C\alpha\wedge\beta.
\]
Consequently, $-\frac{1}{C}\eta^{k\overline{k}}\wedge(\tilde{\omega}-\tilde{\gamma})$ is a $\del\delbar$-primitive of $(-1)^{p+q}\alpha\wedge\beta$. However, by unicity of the $\del\delbar$-primitive $\tilde{\gamma}$, we obtain a contradiction.

As a result, the Aeppli cohomology class $[\eta^{k\overline{k}}\wedge\tilde{\gamma}\wedge\beta]\notin \mathcal{J}$, hence the $ABC$-Massey product on $(\mathfrak{g},J)$
\[
\mathcal{P}=\Bigl\langle[\alpha],[\beta],[\beta]\Bigr\rangle_{ABC}\in\frac{H_{A}^{p+2r-1, q+2s-1}(\mathfrak{g})}{[\alpha]\cup H_A^{2r-1,2s-1}(\mathfrak{g})+[\beta]\cup H_A^{p+r-1 ,q+s-1}(\mathfrak{g})}
\]
is not vanishing. Finally, by Lemma \ref{lemma:corrinvABC}, we have that $\mathcal{P}$ corresponds to a non trivial $ABC$-Massey product on $(M,J)$.

To summarize, we have proved the following.
\begin{theorem}\label{thm:obstruction}
Let $(M=\Gamma\backslash G,J)$ be a nilmanifold of complex dimension $k$ with a invariant complex structure $J$ determined by a basis $\{\eta^1,\dots, \eta^{k}\}$ of $(1,0)$-forms such that
\[
\begin{cases}
d\eta^j=0, \qquad j\in\{1,\dots, k-1\},\\
d\eta^k\in \Span_{\C}\langle\eta^{jl},\eta^{j\overline{l}}\rangle_{j,l=1}^{k-1}.
\end{cases}
\]
If there exist forms $\alpha:=\eta^{i_1\dots i_p\overline{j}_1\dots\overline{j}_q}\in\bigwedge^{p,q}\mathfrak{g}_\C^*$, $\beta:=\eta^{l_1\dots l_{r}\overline{m}_1\dots \overline{m}_s}\in\bigwedge^{r,s}\mathfrak{g}_\C^*$ and a unique form (up to constant) $\tilde{\gamma}=\eta^{t_1\dots t_{p+r-2}\overline{v}_1\dots \overline{v}_{q+s-2}}\in\bigwedge^{p+r-2,q+s-2}\mathfrak{g}_\C^*$, such that
\begin{enumerate}
\item the forms $\alpha$ and $\beta$ are Bott-Chern harmonic and $\alpha\wedge\beta\neq 0$,
\item the form $\eta^{k\overline{k}}\wedge\tilde{\gamma}$ is a $\del\delbar$-primitive of $\alpha\wedge\beta$, i.e., $(-1)^{p+q}\alpha\wedge\beta=\del\delbar(\eta^{k\overline{k}})\wedge\tilde{\gamma}$, and $\tilde{\gamma}\wedge\beta\neq 0$,
\end{enumerate} 
then $(M,J)$ admits a non vanishing $ABC$-Massey product given by
\[
\Bigl\langle[\alpha]_{BC},[\beta]_{BC},[\beta]_{BC}\Bigr\rangle_{ABC}.
\]
\end{theorem}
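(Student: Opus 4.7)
The strategy is to exhibit an explicit invariant representative of the candidate Massey product, verify it is Aeppli harmonic, and then rule out that it lies in the indeterminacy ideal by an $L^2$-pairing argument combined with the explicit cohomology description of Lemma \ref{lemma:BCcohom} and Lemma \ref{lemma:aep_cohom}. Transferring the conclusion from $(\mathfrak g, J)$ to $(M, J)$ is then automatic.

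The first step is to reduce to the Lie algebra. Since special type complex structures are nilpotent, Angella's theorem yields $H_{BC}^{\bullet,\bullet}(M) \cong H_{BC}^{\bullet,\bullet}(\mathfrak g)$ and the same for Aeppli cohomology, so by Lemma \ref{lemma:corrinvABC} it suffices to produce a non-vanishing triple $ABC$-Massey product at the invariant level. By hypothesis, $(-1)^{p+q}\eta^{k\overline k}\wedge \tilde\gamma$ is an invariant $\del\delbar$-primitive of $\alpha\wedge\beta$, so the Massey product is represented, up to an overall sign, by the invariant form $\eta^{k\overline k}\wedge\tilde\gamma\wedge\beta \in \mathcal A^{p+2r-1,q+2s-1}(\mathfrak g)$. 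Because this form contains both $\eta^k$ and $\eta^{\overline k}$, its Hodge dual is free of $\eta^k$ and $\eta^{\overline k}$, hence is $\del$- and $\delbar$-closed by the structure equations \eqref{eq:struct_eq_spec_type}. Together with the obvious $d$-closedness, this shows $\eta^{k\overline k}\wedge\tilde\gamma\wedge\beta$ is Aeppli harmonic, and since $\tilde\gamma\wedge\beta\neq 0$ it represents a nonzero Aeppli class.

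The second step is to argue by contradiction that the class lies outside
\[
\mathcal J = [\alpha]\cup H_A^{2r-1,2s-1}(\mathfrak g) + [\beta]\cup H_A^{p+r-1,q+s-1}(\mathfrak g).
\]
Assuming an identity of the form
\[
\eta^{k\overline k}\wedge\tilde\gamma\wedge\beta = \sum_j \lambda_j\,\alpha\wedge\xi^j + \sum_l \mu_l\,\beta\wedge\psi^l + \del R + \delbar S,
\]
with $\{\xi^j\}, \{\psi^l\}$ bases of Aeppli harmonic forms, one takes the $L^2$-product of both sides with $\eta^{k\overline k}\wedge\tilde\gamma\wedge\beta$. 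Stokes' theorem kills the $\del R$ and $\delbar S$ terms because $\ast(\eta^{k\overline k}\wedge\tilde\gamma\wedge\beta)$ is $d$-closed, so at least one pairing on the right survives. Since the invariant Hermitian metric $g$ can be chosen diagonal in $\{\eta^1,\ldots,\eta^k\}$, the surviving harmonic representative $\xi^{j_0}$ or $\psi^{l_0}$ must contain the factor $\eta^{k\overline k}$, i.e., lie in the component $K_3$ of Lemma \ref{lemma:aep_cohom}.

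The main obstacle is closing out these two cases. If a term $\alpha\wedge\xi^{j_0}$ with $\xi^{j_0} = \eta^{k\overline k}\wedge\tilde\omega$ survives, bidegree bookkeeping and the fact that $\alpha\notin \mathcal I(\eta^k,\eta^{\overline k})$ force $\tilde\gamma = \alpha\wedge\zeta$ for some $\zeta$ (with the low-degree cases $r<2$ or $s<2$ immediately impossible); but then $(-1)^{p+q}\alpha\wedge\beta = \del\delbar(\eta^{k\overline k})\wedge\alpha\wedge\zeta$ exhibits $\beta$ as $\del\delbar$-exact, contradicting its Bott-Chern harmonicity. If instead a term $\beta\wedge\psi^{l_0}$ with $\psi^{l_0} = \eta^{k\overline k}\wedge\tilde\omega$ survives, diagonality of $g$ forces some summand of $\tilde\omega$ to be a nonzero multiple of $\tilde\gamma$, and then imposing $\del\delbar\psi^{l_0} = 0$ yields a second $\del\delbar$-primitive of $\alpha\wedge\beta$ of the form $\eta^{k\overline k}\wedge\tilde\omega'$ that is not proportional to $\eta^{k\overline k}\wedge\tilde\gamma$, contradicting the uniqueness hypothesis on $\tilde\gamma$. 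Either contradiction establishes $[\eta^{k\overline k}\wedge\tilde\gamma\wedge\beta]_A\notin\mathcal J$; hence $\langle[\alpha],[\beta],[\beta]\rangle_{ABC}$ is non-vanishing on $(\mathfrak g, J)$, and Lemma \ref{lemma:corrinvABC} lifts it to a non-vanishing $ABC$-Massey product on $(M, J)$.
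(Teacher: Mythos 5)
Your proposal is correct and follows essentially the same route as the paper: the same invariant representative $\eta^{k\overline{k}}\wedge\tilde{\gamma}\wedge\beta$, the same Aeppli-harmonicity argument via its Hodge dual avoiding $\eta^k,\eta^{\overline{k}}$, the same $L^2$-pairing with Stokes' theorem, and the same two-case dichotomy (forcing $\tilde{\gamma}=\alpha\wedge\zeta$ and hence $\beta$ $\del\delbar$-exact in one case, and violating the uniqueness of the primitive $\tilde{\gamma}$ in the other). No substantive differences to report.
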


As an immediate application of Theorem \ref{thm:obstruction} we provide the explicit examples of two families of nilmanifolds admitting non trivial $ABC$-Massey products.
\begin{ex}\label{example:1}
Consider the family of nilmanifolds $(\Gamma\backslash G= M,J)$ endowed with invariant complex structure $J$ characterized by a basis $\{\eta^1,\eta^2,\eta^3, \eta^4\}$ of left-invariant $(1,0)$-forms on $G$ such that 
\begin{align*}
\begin{cases}
d\eta^1=d\eta^2=d\eta^3=0,\\
d\eta^4=A\eta^{12}+B\eta^{1\overline{3}}+C\eta^{2\overline{1}},
\end{cases}
\end{align*}
with $A,B,C\in\q[i]\setminus\{0\}$. Note that
\[
0\neq \del\delbar(\eta^{4\overline{4}})=-(|A|^2+|C|^2)\eta^{12\overline{12}}-|B|^2\eta^{13\overline{13}}.
\]
Then
\[
\Bigl\langle[\eta^{12\overline{2}}]_{BC},[\eta^{3\overline{1}}]_{BC},[\eta^{3\overline{1}}]_{BC}\Bigr\rangle_{ABC}
\]
is a non vanishing triple $ABC$-Massey product. It suffices to set $\tilde{\gamma}=-\frac{1}{|B|^2}\eta^2$ and then Theorem \ref{thm:obstruction} directly applies.
\end{ex}
\begin{ex}\label{example:2}
Consider the family of nilmanifolds $(\Gamma\backslash G= M,J)$ endowed with invariant complex structure $J$ characterized by a basis $\{\eta^1,\eta^2,\eta^3, \eta^4, \eta^5\}$ of left-invariant $(1,0)$-forms on $G$ such that
\begin{align*}
\begin{cases}
d\eta^1=d\eta^2=d\eta^3=d\eta^4=0,\\
d\eta^5=D\eta^{12}+E\eta^{3\overline{2}}+F\eta^{4\overline{3}},
\end{cases}
\end{align*}
with $E,F,G\in\q[i]\setminus \{0\}$. Note that
\[
0\neq \del\delbar(\eta^{5\overline{5}})=-|D|^2\eta^{12\overline{12}}-|E|^2\eta^{23\overline{23}}-|F|^2\eta^{34\overline{34}}.
\]
Then,
\[
\Bigl\langle[\eta^{34\overline{1}}]_{BC}, [\eta^{1\overline{12}}]_{BC}, [\eta^{1\overline{12}}]_{BC}\Bigr\rangle_{ABC}
\]
is a non vanishing triple $ABC$-Massey product. In this case, it suffices to set $\tilde{\gamma}=-\frac{1}{|D|^2}\eta^{34}$ and Theorem \ref{thm:obstruction} directly follows.
\end{ex}
However, by dropping the hypotesis of special type complex structure, we show that there exist families of nilmanifolds endowed with nilpotent complex structures which admit SKT metrics but are never geometrically-$BC$-formal.

\begin{prop}\label{prop:no_sp_type_SKT}
There exist SKT nilmanifolds with nilpotent complex structures admitting no geometrically-$BC$-formal metrics.
\end{prop}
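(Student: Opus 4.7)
The plan is to exhibit an explicit $8$-dimensional nilmanifold $(M=\Gamma\backslash G, J)$ carrying a nilpotent invariant complex structure which is \emph{not} of special type, admitting an SKT metric, yet supporting a non-vanishing invariant $ABC$-Massey product. By Lemma \ref{lemma:corrinvABC}, such a product survives on $(M,J)$ and obstructs the existence of any geometrically-$BC$-formal metric.

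A natural candidate in complex dimension $4$ is the nilpotent Lie algebra $\mathfrak{g}$ admitting a unitary coframe $\{\eta^1,\eta^2,\eta^3,\eta^4\}$ of invariant $(1,0)$-forms with rational structure equations
\[
d\eta^1=d\eta^2=0,\qquad d\eta^3=\eta^{1\overline{1}},\qquad d\eta^4=\eta^{2\overline{2}}.
\]
Integrability is immediate since both right-hand sides are of type $(1,1)$, and $d^2=0$ is automatic. The complex structure $J$ is $2$-step nilpotent but fails to be of special type since $d\eta^{k-1}=d\eta^3\neq 0$. Rationality of the structure constants produces a lattice $\Gamma\subset G$ via Malcev's theorem. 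Finally, using $\del\overline{\eta}^3=-\eta^{1\overline{1}}$ and $\del\overline{\eta}^4=-\eta^{2\overline{2}}$, one computes
\[
\del\delbar\eta^{3\overline{3}}=\eta^{1\overline{1}}\wedge\del\overline{\eta}^3=-\eta^{1\overline{1}}\wedge\eta^{1\overline{1}}=0,
\]
and analogously $\del\delbar\eta^{4\overline{4}}=0$, so the standard diagonal metric $\omega=\tfrac{i}{2}\sum_j\eta^{j\overline{j}}$ is SKT.

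For the obstruction, consider the Bott-Chern harmonic classes $[\eta^{1\overline{1}}]_{BC},[\eta^{2\overline{2}}]_{BC}\in H_{BC}^{1,1}(\mathfrak{g})$, non-trivial at the invariant level since any $\del\delbar$-primitive would have to be an invariant $(0,0)$-form, hence a constant. A direct computation gives $\eta^{1\overline{1}}\wedge\eta^{2\overline{2}}=\del\delbar(-\eta^{3\overline{4}})$ and $\eta^{2\overline{2}}\wedge\eta^{2\overline{2}}=0$, so the triple product
\[
\mathcal{P}:=\left\langle[\eta^{1\overline{1}}]_{BC},[\eta^{2\overline{2}}]_{BC},[\eta^{2\overline{2}}]_{BC}\right\rangle_{ABC}
\]
is well-defined with primitives $f_{\alpha\beta}=-\eta^{3\overline{4}}$ and $f_{\beta\gamma}=0$, and is represented by the Aeppli class of $\eta^{3\overline{4}}\wedge\eta^{2\overline{2}}$ modulo $[\eta^{1\overline{1}}]\cup H_A^{1,1}(\mathfrak{g})+[\eta^{2\overline{2}}]\cup H_A^{1,1}(\mathfrak{g})$.

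The main obstacle is verifying $\mathcal{P}\neq 0$, namely that this representative lies outside the indeterminacy ideal. I would argue as in the proof of Theorem \ref{thm:SKT-geomBC-dim3}: assume by contradiction an identity $\eta^{3\overline{4}}\wedge\eta^{2\overline{2}}=\eta^{1\overline{1}}\wedge\xi+\eta^{2\overline{2}}\wedge\phi+\del R+\delbar S$ with $\xi,\phi\in\mathcal{H}_{\Delta_A}^{1,1}(\mathfrak{g})$, then pair both sides in $L^2$ against $\ast_g(\eta^{3\overline{4}}\wedge\eta^{2\overline{2}})$ on $M$ and kill the $\del R,\delbar S$ contributions via Stokes' theorem. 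A direct inspection shows that, among the invariant $(1,1)$-forms, only $\eta^{3\overline{4}}$ and $\eta^{4\overline{3}}$ fail to be $\del\delbar$-closed, both mapping under $\del\delbar$ to a scalar multiple of $\eta^{12\overline{12}}$; this severely restricts the Aeppli-harmonic forms $\xi,\phi$ which could produce the $\eta^{3\overline{4}}\wedge\eta^{2\overline{2}}$ component, leading to the desired contradiction. Combining with Lemma \ref{lemma:corrinvABC} transfers the non-vanishing Massey product to $(M,J)$, so that no geometrically-$BC$-formal metric can exist on $(M,J)$.
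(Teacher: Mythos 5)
Your setup (the nilmanifold with $d\eta^3=\eta^{1\overline{1}}$, $d\eta^4=\eta^{2\overline{2}}$, which is nilpotent, not of special type, and SKT for the diagonal metric) is fine, but the triple product you build on it vanishes, so the proof fails at its key step. Indeed, from $\delbar\eta^3=\eta^{1\overline{1}}$, $\delbar\eta^4=\eta^{2\overline{2}}$, $\delbar\overline{\eta}^4=0$ one computes
\[
\delbar\bigl(\eta^{34\overline{4}}\bigr)=\eta^{1\overline{1}}\wedge\eta^{4\overline{4}}-\eta^{3\overline{4}}\wedge\eta^{2\overline{2}},
\]
so that in Aeppli cohomology
\[
\bigl[\eta^{3\overline{4}}\wedge\eta^{2\overline{2}}\bigr]_A=\bigl[\eta^{1\overline{1}}\wedge\eta^{4\overline{4}}\bigr]_A=[\eta^{1\overline{1}}]_{BC}\cup[\eta^{4\overline{4}}]_A\;\in\;[\eta^{1\overline{1}}]\cup H_A^{1,1}(\mathfrak{g}),
\]
where $[\eta^{4\overline{4}}]_A$ is a genuine Aeppli class because $\del\delbar\eta^{4\overline{4}}=0$. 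Thus your representative lies in the indeterminacy ideal and $\mathcal{P}=0$. The same defect shows up in your proposed verification: the Stokes argument requires $\ast_g$ of the representative to be $\del$- and $\delbar$-closed (i.e.\ the representative to be Aeppli harmonic), but $\ast_g(\eta^{3\overline{4}}\wedge\eta^{2\overline{2}})=\pm\eta^{14\overline{13}}$ satisfies $\delbar(\eta^{14\overline{13}})\neq 0$, so the $\delbar S$ term cannot be discarded. Your observation that only $\eta^{3\overline{4}},\eta^{4\overline{3}}$ fail to be $\del\delbar$-closed is correct but does not restrict the ideal in the way you need, precisely because the $\del\delbar$-closed form $\eta^{4\overline{4}}$ feeds the ideal through $[\eta^{1\overline{1}}]\cup[\eta^{4\overline{4}}]_A$.

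By contrast, the paper works with a different complex $4$-dimensional nilmanifold, with $d\eta^3=B_1\eta^{2\overline{1}}$ and $d\eta^4=G_1\eta^{12}+D_1\eta^{1\overline{1}}+D_2\eta^{1\overline{2}}+E_2\eta^{2\overline{2}}$ subject to the SKT constraint $|B_1|^2+|G_1|^2+|D_2|^2-2\mathfrak{Re}(D_1\overline{E}_2)=0$, and uses the product $\bigl\langle[\eta^{124\overline{1}}],[\eta^{\overline{2}}],[\eta^{\overline{2}}]\bigr\rangle_{ABC}$ with primitive $-\frac{1}{|B_1|^2}\eta^{34\overline{3}}$; non-vanishing is then checked by an explicit determination of $\mathcal{H}_A^{2,1}(\mathfrak{g})$ together with the $L^2$/Stokes pairing against the $d$-closed form $\ast\eta^{34\overline{23}}=\eta^{12\overline{14}}$. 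If you want to keep your general strategy, you must either choose a different manifold or a different triple of classes for which the representative genuinely escapes the ideal; as it stands, the example does not establish the proposition.
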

\begin{proof}
Let $(M=\Gamma\backslash G,J)$ be a family of nilmanifolds of complex dimension $4$ endowed with an invariant complex structure $J$ characterized by a basis $\{\eta^1,\eta^2, \eta^3, \eta^4\}$ of $(\mathfrak{g}^{(1,0)})^*$ such that
\begin{equation}\label{eq:struct_eq_SKT_non_geom_BC}
\begin{cases}
d\eta^1=0\\
d\eta^2=0\\
d\eta^3=B_1\eta^{2\overline{1}}\\
d\eta^4=G_1\eta^{12}+D_1\eta^{1\overline{1}}+D_2\eta^{1\overline{2}}+E_2\eta^{2\overline{2}},
\end{cases}
\end{equation}
with $B_1,G_1,D_1,D_2,D_3\in\q[i]\setminus \{0\}$.
Let $\omega=\frac{i}{2}(\eta^{1\overline{1}}+\eta^{2\overline{2}}+\eta^{3\overline{3}}+\eta^{4\overline{4}})$ be the fundamental form of the diagonal Hermitian metric $g$ on $\mathfrak{g}$. Then, $g$ is SKT if, and only if, it holds $\del\delbar\omega=0$, i.e., 
\begin{equation}\label{eq:SKT}
|B_1|^2+|G_1|^2+ |D_2|^2-2\mathfrak{Re}(D_1\overline{E}_2)=0.
\end{equation}
Assume that \eqref{eq:SKT} holds. We now construct a non trivial $ABC$-Massey product.

Consider the forms
$
\eta^{124\overline{1}}$ and $\eta^{\overline{2}}.
$
Note that, by structure equations, $d\eta^{124\overline{1}}=d\eta^{\overline{2}}=0$, hence
\[
[\eta^{124\overline{1}}]_{BC}\in H_{BC}^{3,1}(\mathfrak{g}), \qquad [\eta^{\overline{2}}]\in H_{BC}^{0,1}(\mathfrak{g})
\]
are well defined Bott-Chern cohomology classes. Moreover, the forms $\ast\eta^{124\overline{1}}=\eta^{3\overline{234}}$ and $\ast\eta^{\overline{2}}=-\eta^{1234\overline{134}}$ are $\del\delbar$-closed, thus the forms $\eta^{124\overline{1}}$ and $\eta^{\overline{2}}$ are Bott-Chern harmonic forms and they define non vanishing Bott-Chern cohomology classes, i.e.,
\[
[\eta^{124\overline{1}}]_{BC}\neq 0, \qquad [\eta^{\overline{2}}]_{BC}\neq 0.
\]
Now, $\eta^{124\overline{1}}\wedge\eta^{\overline{2}}=\del\delbar(-\frac{1}{|B_1|^2}\eta^{34\overline{3}})$, so that the $ABC$-Massey product
\[
\mathcal{P}=\Bigl\langle [\eta^{124\overline{1}}], [\eta^{\overline{2}}], [\eta^{\overline{2}}]
\Bigr\rangle_{ABC}\in\frac{H_A^{2,2}(\mathfrak{g})}{[\eta^{\overline{2}}]\cup H_A^{2,1}(\mathfrak{g})}
\]
is well defined and represented  by the Aeppli cohomology class
\[
\left[\frac{1}{|B_1|^2}\eta^{34\overline{23}}\right]_A\in H_A^{2,2}(\mathfrak{g}).
\]
Note that $\ast\eta^{34\overline{23}}=\eta^{12\overline{14}}$, which is $d$-closed, hence, as a Aeppli-cohomology class, $\left[\frac{1}{|B_1|^2}\eta^{34\overline{23}}\right]_A\neq 0$.
We claim that $\left[\frac{1}{|B_1|^2}\eta^{34\overline{23}}\right]_A$ does not belong to the ideal $[\eta^{\overline{2}}]\cup H_A^{2,1}(\mathfrak{g})$.

Assume, by contradiction, that this is the case, i.e., there exist constants $\lambda_j\in\C$ and forms $R\in\bigwedge^{1,2}\mathfrak{g}_\C^*$, $S\in\bigwedge^{2,1}\mathfrak{g}_\C^*$ such that
\[
\frac{1}{|B_1|^2}\eta^{34\overline{23}}=\sum_{j=1}^{h_A^{2,1}}\lambda_j\eta^{\overline{2}}\wedge\xi^{j}+\del R +\delbar S,
\]
where $\{\xi^j\}_{j=1}^{h_A^{2,1}}$ is a basis of $\mathcal{H}_A^{2,1}(\mathfrak{g})=(\Ker\del\delbar \cap \bigwedge^{2,1}\mathfrak{g}_\C^*)\cap (\Ker d^* \cap \bigwedge^{2,1}\mathfrak{g}_\C^*)$. Notice that
\begin{align*}
\Ker\del\delbar\cap\textstyle\bigwedge^{2,1}\mathfrak{g}_\C^*=\Span_{\C}&\langle\eta^{12\overline{1}}, \eta^{12\overline{2}}, \eta^{12\overline{3}}, \eta^{12\overline{4}},\eta^{13\overline{1}}, \eta^{13\overline{2}}, \eta^{13\overline{3}}, \eta^{13\overline{4}},
\eta^{14\overline{1}},\eta^{14\overline{2}},
\eta^{14\overline{3}},\\
&\eta^{14\overline{4}},\eta^{23\overline{1}},
\eta^{23\overline{2}},
\eta^{23\overline{3}},\eta^{23\overline{4}},\eta^{24\overline{1}}, 
\eta^{24\overline{2}}, 
\eta^{24\overline{3}},
\eta^{24\overline{4}},
\eta^{34\overline{1}},
\eta^{34\overline{2}}\rangle
\end{align*}
whereas
\begin{align*}
\Ker d^*\cap\textstyle \bigwedge^{2,1}\mathfrak{g}_\C^*=\displaystyle\langle \eta^{13\overline{3}},\eta^{13\overline{4}},\eta^{14\overline{1}}-\frac{\overline{D}_1}{G_1}\eta^{12\overline{4}}, \eta^{14\overline{2}}-\frac{\overline{B}_1}{G_1}\eta^{12\overline{3}},\eta^{14\overline{3}}, \eta^{14\overline{4}},\\
\eta^{23\overline{2}}+\frac{1}{\overline{D}_2}\left(\frac{E_2\overline{D}_1}{D_1}-\overline{E}_2\right)\eta^{23\overline{1}}-\frac{E_2}{D_1}\eta^{13\overline{1}},\eta^{23\overline{3}},\eta^{23\overline{4}},\\
\eta^{24\overline{1}}-\frac{B_1\overline{D}_1}{D_1\overline{D}_2}\eta^{23\overline{1}}+\frac{B_1}{D_1}\eta^{13\overline{1}}-\frac{\overline{D}_2}{G_1}\eta^{12\overline{4}},\\
\eta^{24\overline{2}}+\frac{\overline{E}_2}{G_1}\eta^{12\overline{4}}, \eta^{24\overline{3}},\eta^{24\overline{4}},\eta^{34\overline{1}},\eta^{34\overline{2}},\eta^{34\overline{3}},\eta^{34\overline{4}}
\rangle
\end{align*}
so that
\begin{align*}
\mathcal{H}_A^{2,1}(\mathfrak{g})=\langle \eta^{13\overline{3}},\eta^{13\overline{4}},\eta^{14\overline{1}}-\frac{\overline{D}_1}{G_1}\eta^{12\overline{4}}, \eta^{14\overline{2}}-\frac{\overline{B}_1}{G_1}\eta^{12\overline{3}},\eta^{14\overline{3}}, \eta^{14\overline{4}},\\
\eta^{23\overline{2}}+\frac{1}{\overline{D}_2}\left(\frac{E_2\overline{D}_1}{D_1}-\overline{E}_2\right)\eta^{23\overline{1}}-\frac{E_2}{D_1}\eta^{13\overline{1}},\eta^{23\overline{3}},\eta^{23\overline{4}},\\
\eta^{24\overline{1}}-\frac{B_1\overline{D}_1}{D_1\overline{D}_2}\eta^{23\overline{1}}+\frac{B_1}{D_1}\eta^{13\overline{1}}-\frac{\overline{D}_2}{G_1}\eta^{12\overline{4}},
\eta^{24\overline{2}}+\frac{\overline{E}_2}{G_1}\eta^{12\overline{4}},\\ \eta^{24\overline{3}},\eta^{24\overline{4}},\eta^{34\overline{1}},\eta^{34\overline{2}}
\rangle.
\end{align*}
Set $\xi^1:=\eta^{13\overline{3}}$, $\xi^2:=\eta^{13\overline{4}}$ and so on. We point out that $h_A^{2,1}=15.$

Then, by taking the $L^2$-norm of $\eta^{34\overline{23}}$, we obtain
\begin{align}
0\neq \frac{1}{|B_1|^2}\,||\eta^{34\overline{23}}||^2&=\frac{1}{|B_1|^2}\int_M\eta^{34\overline{23}}\wedge\ast\eta^{34\overline{23}}\nonumber\\
&=\int_M \sum_{j=1}^{15}\lambda_j\eta^{\overline{2}}\wedge\xi^{j}\wedge \eta^{12\overline{14}}+ \int_M \del R\wedge \eta^{12\overline{14}} +\int_M \delbar S\wedge \eta^{12\overline{14}}=0,\label{eq:ABC-stokes}
\end{align}
since $\xi^j\neq \eta^{34\overline{3}}$ for every $j\in\{1,\dots, 15\}$ and the first term of \eqref{eq:ABC-stokes} vanishes; by Stokes' theorem, being $d\eta^{12\overline{14}}=0$, also the second and third terms vanish. Therefore, we have a contradiction, i.e., the $ABC$-Massey product on $(\mathfrak{g},J)$
$$\mathcal{P}:=\Bigl\langle[\eta^{124\overline{1}}]_{BC}, [\eta^{\overline{2}}]_{BC}, [\eta^{\overline{2}}]_{BC}\Bigr\rangle_{ABC}\neq 0,$$
is not vanishing. Thus, by Lemma \ref{lemma:corrinvABC} the product $\mathcal{P}$ corresponds to non vanishing $ABC$-Massey product on $(M,J)$, which, consequently, is never geometrically-$BC$-formal.

As a result, each element of the family of manifolds \eqref{eq:struct_eq_SKT_non_geom_BC} satisfying assumption \eqref{eq:SKT} admits a SKT metric, i.e., the diagonal metric with respect to the coframe $\{\eta^1,\eta^2,\eta^3,\eta^4\}$, but it admits a non vanishing $ABC$-Massey product, so it never geometrically-$BC$-formal. \end{proof}

\section{K\"ahler solvmanifolds and Bott-Chern formality}\label{sec:5}

In this section, we prove that every K\"ahler solvmanifold is geometrically-$BC$-formal, hence answering a question in \cite{MS}: despite the existence of a manifold satisfying the $\del\delbar$-lemma and yet admitting a non vanishing $ABC$-Massey product \cite{SfeTom1}, it remains unclear whether $ABC$-Massey product provide an obstruction to stronger cohomological properties on a compact complex manifold. As a starting point, the authors in \cite{MS} suggest that one may look at the class of K\"ahler solvmanifolds, although they seem inclined to think that this class of manifolds might not be decisive. We will indeed prove that K\"ahler solvmanifolds are metrically formal, so that every $ABC$-Massey product vanishes.

We start by recalling the fundamental result of Hasegawa on the structure of K\"ahler solvmanifolds.
\begin{theorem}
A compact solvmanifold admits a K\"ahler structure if and only if it is a finite quotient of a complex torus which has a structure of a complex torus bundle over a complex torus. In particular, a compact solvmanifold of completely solvable type has a K\"ahler structure if and only
if it is a complex torus.
\end{theorem}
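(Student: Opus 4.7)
The plan is to use the Mostow structure theorem for solvmanifolds as the main organizing tool, combined with the Benson--Gordon theorem on K\"ahler nilmanifolds and the formality of compact K\"ahler manifolds in the sense of Deligne--Griffiths--Morgan--Sullivan.

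First I would dispatch the easier ``if'' direction. A finite quotient of a complex torus $T$ admitting a holomorphic torus bundle structure over a complex torus inherits a K\"ahler structure in a natural way: $T$ itself carries flat K\"ahler metrics, and averaging any such K\"ahler form over the finite deck transformation group produces an invariant K\"ahler form which descends to the quotient. The bundle structure over the base torus is automatic since everything comes from the linear structure on the universal cover.

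For the ``only if'' direction, let $M = \Gamma\backslash G$ be a compact K\"ahler solvmanifold and let $N$ be the nilradical of $G$. By the Mostow structure theorem, after possibly passing to a finite cover, $M$ fibers smoothly as $F\to M \to T^k$, where $T^k$ is a real torus and $F = (\Gamma\cap N)\backslash N$ is a compact nilmanifold. The first key claim is that $F$ must itself be K\"ahler, and therefore by the Benson--Gordon theorem it must be a complex torus. The natural way to see this is via the Albanese map $M \to \mathrm{Alb}(M)$, which in the K\"ahler setting behaves well and whose fibers can be compared with those of the Mostow fibration; by dimension counting and the Hard Lefschetz property the two essentially agree up to finite cover, forcing compatibility of $F$ with the complex structure.

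The final and most delicate step is to upgrade the smooth Mostow fibration to a holomorphic torus bundle and to show that the residual monodromy is finite. Here I would exploit the formality of $M$ as a DGA together with Hard Lefschetz to produce obstructions on the structure constants of $\mathfrak{g}$: any non-trivial non-semisimple action in the adjoint representation of the quotient $G/N$ on $N$ would give rise either to non-vanishing higher Massey products or to a violation of Hard Lefschetz, so the monodromy must become trivial on a finite index subgroup of $\pi_1(T^k)$. The main obstacle I anticipate is precisely this last point, namely, bridging the global analytic K\"ahler hypothesis on $M$ to the purely algebraic structure of $\mathfrak{g}$; the naive strategy of pulling the K\"ahler form back to fibers fails because fibers are not a priori complex submanifolds, so one really does need to feed Hodge-theoretic input (Albanese, formality, Lefschetz) into the Lie-algebraic side.
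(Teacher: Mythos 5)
This theorem is not proved in the paper at all: it is Hasegawa's structure theorem, quoted verbatim from \cite{Hase} and used as a black box, so there is no in-paper argument to compare against. Judged on its own merits, your sketch has a genuine gap at exactly the step you yourself flag as the most delicate one.

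The fatal problem is the claim that ``any non-trivial non-semisimple action in the adjoint representation of $G/N$ on $N$ would give rise either to non-vanishing higher Massey products or to a violation of Hard Lefschetz.'' First, the obstruction you need to kill is not only non-semisimple monodromy but also semisimple monodromy of infinite order (e.g.\ the action of a hyperbolic element of $SL(2;\Z)$ on $\T^2$, as in the completely solvable Benson--Gordon examples). Second, and more seriously, formality and Hard Lefschetz are provably \emph{insufficient} to rule these out: the Benson--Gordon symplectic solvmanifolds are formal, satisfy Hard Lefschetz, and have vanishing Massey products, yet carry no K\"ahler structure --- indeed, establishing their non-K\"ahlerianity (the Benson--Gordon conjecture) is precisely the content of the second sentence of the theorem you are trying to prove, so your strategy is circular at best and contradicted by known examples at worst. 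Hasegawa's actual argument replaces this step with a much deeper input from non-abelian Hodge theory: the Arapura--Nori theorem that a polycyclic K\"ahler group is virtually nilpotent. Combined with Mostow rigidity (a solvmanifold is determined up to diffeomorphism by its fundamental group), this shows a finite cover of $M$ is diffeomorphic to a nilmanifold, hence by Benson--Gordon to a torus; one then still needs Catanese's theorem that a compact K\"ahler manifold homotopy equivalent to a torus is biholomorphic to a complex torus, another ingredient absent from your sketch. Your Albanese/dimension-count argument for the K\"ahlerianity of the Mostow fiber is likewise only a heuristic, since, as you note, the fibers are not a priori complex submanifolds. The ``if'' direction by averaging a flat K\"ahler form over the finite deck group is fine.
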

In \cite[Example 4]{Hase}, the author gives a characterization of the geometric structure of a K\"ahler non toric solvmanifold, which we now recall.

A K\"ahler solvmanifold $X$ arises as the compact quotient $\Gamma\backslash G$ of a simply connected solvable Lie group $G=\C^l\rtimes_{\phi}\R^{2k}$ defined by a map
\begin{gather*}
\phi\colon \R^{2k}\rightarrow \text{Aut}(\C^l)\\
\phi(t_iE_i)\cdot\, ^t(z_1,\dots,z_l):=\,^t\left(e^{\sqrt{-1}\eta^it_i}z_1,\dots, e^{\sqrt{-1}\eta^it_i}z_l\right)=e^{\sqrt{-1}\eta^it_i}\mathbb{I}\cdot \,^t(z_1,\dots, z_l),
\end{gather*}
where $E_i$ is the $i$-th unit vector of $\R^{2k}$ and $e^{\sqrt{-1}\eta^i}$ is the $s_i$-th root of unity, for $i=1,\dots, 2k$. The discrete subgroup $\Gamma:=\Z^{2l}\rtimes_{\phi} \Z^{2k}$ is defined so that $\phi(\Z^{2k})(\Z^{2l})\subset \Z^{2l}$ and, hence, $\Gamma$ a lattice of $G$.

Under the natural identification $\R^{2k}\simeq \C^{k}$, let $W=(z_{l+1},\dots,z_{l+k})\in\C^k$ be the coordinates of a point in $\C^k$ with respect to the standard basis of $\C^k$. Then, the map $\phi$ can be extended to
\begin{equation}\label{eq:semidirect_phi_KS}
\phi(W)\cdot \,^t(z_1,\dots,z_l)=e^{\sqrt{-1}(\sum_{i=1}^k\mathfrak{Re}(z_{l+i})\eta^{2l+2i-1}-\mathfrak{Im}(z_{l+i})\eta^{2l+2i})}\mathbb{I}\cdot\,^t(z_1, \dots,z_l).
\end{equation}
In particular, if $z=\,^t(z_1,\dots,z_l,z_{l+1},\dots, z_{l+k}),z'=\,^t(z_1',\dots,z_l',z_{l+1}',\dots, z_{l+k}')\in G$, then the product  rule $\ast$ of $G$ is defined as
\[
z'*z=\left(\phi(z_{l+1}',\dots,z_{l+k}')\cdot\, ^t(z_1,\dots,z_l)+\,^t(z_1',\dots,z_l'),\,^t(z_{l+i}+z_{l+1}',\dots, z_{l+k}+z_{l+k}')\right).
\]
The Lie algebra $\mathfrak{g}$ of $G$ is spanned by the set of $G$-left-invariant vector fields
\[
\left\{ X_1,\dots X_{2l},X_{2l+1}\dots, X_{2l+2k}\right\}
\]
which satisfy the bracket relations
\begin{equation}\label{eq:bracket_KS}
[X_{2l+2i},X_{2j-1}]=-X_{2j}, \qquad [X_{2l+2i},X_{2j}]=X_{2j-1},
\end{equation}
for $i=1,\dots, k$, $j=1, \dots, l$, whereas any other bracket vanishes.  The standard $G$-left-invariant almost complex structure $J$ on $G$ is defined by
\begin{equation}\label{eq:cplx-strct_KS}
JX_{2j-1}=X_{2j}, \quad JX_{2j}=-X_{2j-1}, \quad JX_{2l+2i-1}=X_{2l+2i}, \quad JX_{2l+2i}=-X_{2l+2i-1},
\end{equation}
for $i=1,\dots,k$, $j=1,\dots,k$, so that $\mathfrak{g}_{\C}$ decomposes as
\[
\mathfrak{g}_{\C}=\mathfrak{g}^{1,0}\oplus \mathfrak{g}^{0,1},
\]
where $\mathfrak{g}^{1,0}=\langle Z_1,\dots, Z_{l},Z_{l+1}, \dots, Z_{l+k}\rangle$ and $\mathfrak{g}^{0,1}=\langle \overline{Z_1},\dots, \overline{Z_{l}},\overline{Z_{l+1}}, \dots, \overline{Z_{l+k}}\rangle$ are, respectively, the $\pm i$-eigenspaces of $J$ and 
\[
Z_{j}:=\frac{1}{2}\left(X_{2j-1}-iX_{2j}\right), \quad Z_{l+i}:=\frac{1}{2}\left(X_{2l+2i-1}-iX_{2l+2i}\right)
\]
for $i=1,\dots,k$, $j=1,\dots, l$.

If we denote by $\displaystyle\left\{e^1,\dots, e^{2l},e^{2l+1},\dots, e^{2l+2k}\right\}$ the coframe of $G$-left-invariant forms on $G$ dual to $\{X_1,\dots,X_{2l},X_{2l+1},\dots, X_{2l+2k}\}$, dualizing the relations \eqref{eq:bracket_KS}, we obtain the structure equations
\begin{align*}
\begin{cases}
\,\, de^{2j-1} &=\displaystyle-\sum_{i=1}^ke^{2j}\wedge e^{2l+2i}, \quad j=1,\dots, l,\vspace{0.2cm}\\
\,\, de^{2j} &=\displaystyle\sum_{i=1}^ke^{2j-1}\wedge e^{2l+2i},  \quad j=1,\dots, l,\vspace{0.2cm}\\
\,\,de^{2l+2i-1} &=0, \quad i=1,\dots, k\vspace{0.2cm}\\
\,\,de^{2l+2i} &=0, \quad i=1,\dots, k.
\end{cases}
\end{align*}
The complex structure $J$ defined in \eqref{eq:cplx-strct_KS} determines the basis of $(\mathfrak{g}_{\C}^*)^{1,0}$ which we denote by $\left\{\varphi^1,\dots, \varphi^l,\varphi^{l+1},\dots, \varphi^{l+k}\right\}$, where
\begin{equation}\label{eq:complex_forms_KS}
\varphi^j:=e^{2j-1}+ie^{2j}, \qquad \varphi^{l+i}:=e^{2l+2i-1}+ie^{2l+2i},
	\end{equation}
for $j=1,\dots, l$, $i=1,\dots, k$, which satisfy the complex structure equations
\begin{align}\label{eq:complex_forms_struct_eq_KS}
\begin{cases}
\,\, d\varphi^j&=\displaystyle\frac{1}{2}\sum_{i=1}^k\varphi^{j}\wedge\varphi^{l+i}-\varphi^{j}\wedge\varphi^{\overline{l+i}}, \quad j=1,\dots, l\vspace{0.2cm}\\
\,\,d\varphi^{l+i}&=0, \quad i=1,\dots,k.
\end{cases}
\end{align}
As a result, the manifold $X=(\Gamma\backslash G,J)$ is a K\"ahler solvmanifold endowed with a $G$-invariant complex structure, since $J$ is $G$-left-invariant and hence descends to $\Gamma\backslash G$. An invariant K\"ahler metric on $X$ is, for example, the canonical diagonal  with respect to the coframe $\{\varphi^1, \dots, \varphi^{l+k}\}$, as we will prove in Corollary \ref{cor:inv_Kahler_metric_KS}.

We now give an explicit expression for the differential of any $G$-left-invariant $(p,q)$-form on $X$. At this level, we consider the elements of the natural basis
\begin{equation}\label{eq:basis_inv_forms_KS}
\left\{\varphi^{a_1}\wedge\dots \wedge\varphi^{a_p}\wedge\varphi^{\overline{b_1}}\wedge\dots\wedge\varphi^{\overline{b_q}}\colon 1\leq a_1<\dots <a_p\leq l+k, 1\leq b_1<\dots<b_q\leq l+k\right\}.
\end{equation}
\begin{notation*}
From now on, we will write the holomorphic and the anti-holomorphic components of any element of the basis \eqref{eq:basis_inv_forms_KS} by listing first the $(1,0)$-forms (respectively, $(0,1)$-forms) belonging to $\{\varphi^j\}_{j=1}^l$ (respectively, $\{\varphi^{\overline{j}}\}_{j=1}^l$) and then the $(1,0)$-forms (respectively, the $(0,1)$-forms) belonging to $\{\varphi^{l+i}\}_{i=1}^k$ (respectively, to $\{\varphi^{\overline{l+i}}\}_{i=1}^k$), i.e., we will adopt the following notation
\begin{equation}\label{eq:simpleform_KS}
\alpha=\varphi^{j_1\dots j_r}\wedge\varphi^{ l+i_1\dots l+i_{p-r}}\wedge\varphi^{\overline{m}_1\dots \overline{m}_s}\wedge\varphi^{\overline{l+n_{1}}\dots\overline{l+n_{q-s}}},
\end{equation}
with  $r\in\{\max(0,p-k),\dots, p\}$, $s\in\{\max(0,q-k),\dots, q\}$, $j_1,\dots,j_r\in\{1,\dots,l\}$, $i_1,\dots, i_{p-r}\in\{1,\dots, k\}$, $m_1,\dots,m_s\in\{1,\dots,l\}$, $n_1,\dots, n_{q-s}\in\{1,\dots, k\}$.
\end{notation*}
With respect to this notation the differential acts as in the following lemma.
\begin{lemma}\label{lemma:simpleform_KS}
For any invariant $(p,q)$-form $\alpha\in X$ written as in \eqref{eq:simpleform_KS}, it holds
\begin{align*}\label{eq:diff_simpleform_KS}
d\alpha=\frac{s-r}{2}\left(\sum_{i=1}^k\varphi^{l+i}\right)\wedge\alpha 
+\frac{r-s}{2}\left(\sum_{i=1}^k\varphi^{\overline{l+i}}\right)\wedge\alpha.
\end{align*}
In particular,
\[
\del\alpha=\frac{s-r}{2}\left(\sum_{i=1}^k\varphi^{l+i}\right)\wedge\alpha
\]
and
\[
\delbar\alpha=\frac{r-s}{2}\left(\sum_{i=1}^k\varphi^{\overline{l+i}}\right)\wedge\alpha.
\]
\end{lemma}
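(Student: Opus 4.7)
The plan is to introduce an auxiliary invariant $1$-form which isolates the only non-trivial part of the exterior derivative, and then to reduce the lemma to a short sign-tracking computation. Set
\[
\tau \;:=\; \tfrac{1}{2}\sum_{i=1}^{k}\bigl(\varphi^{l+i}-\varphi^{\overline{l+i}}\bigr).
\]
From the structure equations \eqref{eq:complex_forms_struct_eq_KS} and their conjugates one immediately reads off
\[
d\varphi^{j}\;=\;\varphi^{j}\wedge\tau, \qquad d\varphi^{\overline{m}}\;=\;-\varphi^{\overline{m}}\wedge\tau \qquad (j,m\in\{1,\dots,l\}),
\]
whereas $d\varphi^{l+i}=d\varphi^{\overline{l+i}}=0$ for $i\in\{1,\dots,k\}$. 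Thus the whole computation reduces to moving the single $1$-form $\tau$ past products of the $\varphi$'s.

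Next I would write $\alpha=\beta_{1}\wedge\beta_{2}\wedge\gamma_{1}\wedge\gamma_{2}$ as in \eqref{eq:simpleform_KS}, where $\beta_{1}=\varphi^{j_{1}\dots j_{r}}$, $\beta_{2}=\varphi^{l+i_{1}\dots l+i_{p-r}}$, $\gamma_{1}=\varphi^{\overline{m}_{1}\dots \overline{m}_{s}}$ and $\gamma_{2}=\varphi^{\overline{l+n_{1}}\dots\overline{l+n_{q-s}}}$. Since $\beta_{2}$ and $\gamma_{2}$ are closed, Leibniz gives
\[
d\alpha \;=\; d\beta_{1}\wedge\beta_{2}\wedge\gamma_{1}\wedge\gamma_{2}+(-1)^{p}\,\beta_{1}\wedge\beta_{2}\wedge d\gamma_{1}\wedge\gamma_{2}.
\]
A direct induction on $r$, using the first identity above, yields $d\beta_{1}=-r\,\tau\wedge\beta_{1}$, and the analogous induction on $s$ yields $d\gamma_{1}=s\,\tau\wedge\gamma_{1}$. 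Substituting these in and commuting the factor of $\tau$ appearing in the second summand past $\beta_{1}\wedge\beta_{2}$ produces an extra $(-1)^{p}$ which cancels the prefactor $(-1)^{p}$, so that
\[
d\alpha \;=\; (s-r)\,\tau\wedge\alpha \;=\; \tfrac{s-r}{2}\sum_{i=1}^{k}\bigl(\varphi^{l+i}-\varphi^{\overline{l+i}}\bigr)\wedge\alpha,
\]
which is the first claim of the lemma. The expressions for $\partial\alpha$ and $\bar\partial\alpha$ are then immediate by projecting onto the components of pure bidegree $(p+1,q)$ and $(p,q+1)$ respectively.

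The only genuinely delicate point is the verification of $d\beta_{1}=-r\,\tau\wedge\beta_{1}$ and $d\gamma_{1}=s\,\tau\wedge\gamma_{1}$: one must check that the $a$-dependent signs $(-1)^{a-1}$ coming from differentiating the $a$-th factor of the monomial, and the signs $(-1)^{r-a}$ (respectively $(-1)^{s-c}$) coming from commuting the produced $\tau$ to a canonical position, cancel pairwise, so that the $r$ (respectively $s$) summands combine into a clean multiple of $\tau\wedge\beta_{1}$ (respectively $\tau\wedge\gamma_{1}$) instead of telescoping. Once this sign-chase is done, the rest of the argument is essentially bookkeeping, so I would spell it out explicitly for $r=1,2$ as a sanity check and then invoke the induction.
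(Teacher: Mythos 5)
Your proof is correct and follows essentially the same route as the paper: apply Leibniz to the four blocks of the monomial, use the structure equations to differentiate each $\varphi^{j}$ and $\varphi^{\overline{m}}$, and track the signs from commuting the resulting invariant $1$-form to the front, so the $r$ (resp.\ $s$) contributions add up rather than telescope. Introducing $\tau=\tfrac12\sum_i(\varphi^{l+i}-\varphi^{\overline{l+i}})$ is only a notational streamlining of the paper's computation of $\Omega_1$ and $\Omega_2$, and the final bidegree projection giving $\del\alpha$ and $\delbar\alpha$ matches the paper's statement.
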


\begin{proof}
We see that, by Leibniz rule, we have
\begin{align*}
d\alpha=& \,d\left(\varphi^{j_1\dots j_r}\wedge\varphi^{ l+i_1\dots l+i_{p-r}}\wedge\varphi^{\overline{m}_1\dots \overline{m}_s}\wedge\varphi^{\overline{l+n_{1}}\dots\overline{l+n_{q-s}}}\right)\\
=& \,d\left(\varphi^{j_1\dots j_r}\wedge\varphi^{ l+i_1\dots l+i_{p-r}}\right)\wedge\varphi^{\overline{m}_1\dots \overline{m}_s}\wedge\varphi^{\overline{l+n_{1}}\dots\overline{l+n_{q-s}}}\\
&+(-1)^p\varphi^{j_1\dots j_r}\wedge\varphi^{ l+i_1\dots l+i_{p-r}}\wedge d\left(\varphi^{\overline{m}_1\dots \overline{m}_s}\wedge\varphi^{\overline{l+n_{1}}\dots\overline{l+n_{q-s}}}\right)\\
=& \,\Omega_1 + (-1)^p \Omega_2.
\end{align*}
Let us first focus on the term $\Omega_1=d\left(\varphi^{j_1\dots j_r}\wedge\varphi^{ l+i_1\dots l+i_{p-r}}\right)\wedge\varphi^{\overline{m}_1\dots \overline{m}_s}\wedge\varphi^{\overline{l+n_{1}}\dots\overline{l+n_{q-s}}}$. By the structure equations and graded-commutativity of the wedge product, it holds that
\begin{align*}
\Omega_1=& \,\frac{1}{2}\varphi^{j_1}\wedge\left(\sum_{i=1}^k\varphi^{l+i}-\sum_{i=1}^k\varphi^{\overline{l+i}}\right)\wedge\varphi^{j_2\dots j_r}\wedge\varphi^{ l+i_1\dots l+i_{p-r}}\wedge\varphi^{\overline{m}_1\dots \overline{m}_s}\wedge\varphi^{\overline{l+n_{1}}\dots\overline{l+n_{q-s}}}\\
&+\dots \\
&+(-1)^{r-1}\varphi^{j_1\dots j_{r-1}}\wedge\frac{1}{2}\varphi^{j_r}\wedge\left(\sum_{i=1}^k\varphi^{l+i}-\sum_{i=1}^k\varphi^{\overline{l+i}}\right)\wedge\varphi^{ l+i_1\dots l+i_{p-r}}\wedge\varphi^{\overline{m}_1\dots \overline{m}_s}\wedge\varphi^{\overline{l+n_{1}}\dots\overline{l+n_{q-s}}}\\
=& \, -\frac{r}{2}\left(\sum_{i=1}^k\varphi^{l+i}-\varphi^{\overline{l+i}}\right)\wedge\varphi^{j_1\dots j_r}\wedge\varphi^{ l+i_1\dots l+i_{p-r}}\wedge\varphi^{\overline{m}_1\dots \overline{m}_s}\wedge\varphi^{\overline{l+n_{1}}\dots\overline{l+n_{q-s}}}.
\end{align*}
Analogously, for other summand $\Omega_2=\varphi^{j_1\dots j_r}\wedge\varphi^{ l+i_1\dots l+i_{p-r}}\wedge d\left(\varphi^{\overline{m}_1\dots \overline{m}_s}\wedge\varphi^{\overline{l+n_{1}}\dots\overline{l+n_{q-s}}}\right)$ we obtain the explicit expression
\begin{align*}
\Omega_2=& \,\varphi^{j_1\dots j_r}\wedge\varphi^{ l+i_1\dots l+i_{p-r}}\wedge \frac{1}{2}\varphi^{\overline{m}_1}\wedge\left(\sum_{i=1}^k\varphi^{\overline{l+i}}-\sum_{i=1}^k\varphi^{l+i}\right)\wedge\varphi^{\overline{m}_2\dots \overline{m}_s}\wedge\varphi^{\overline{l+n_{1}}\dots\overline{l+n_{q-s}}}\\
&+ \dots\\
&+(-1)^s \varphi^{j_1\dots j_r}\wedge\varphi^{ l+i_1\dots l+i_{p-r}}\wedge \varphi^{\overline{m}_1\dots \overline{m}_{s-1}}\wedge\frac{1}{2}\varphi^{\overline{m_s}}\wedge\left(\sum_{i=1}^k\varphi^{\overline{l+i}}-\sum_{i=1}^k\varphi^{l+i}\right)\wedge\varphi^{\overline{l+n_{1}}\dots\overline{l+n_{q-s}}}\\
=& \, (-1)^p\frac{s}{2}\left(\sum_{i=1}^k\varphi^{l+i}-\varphi^{\overline{l+i}}\right)\wedge\varphi^{j_1\dots j_r}\wedge\varphi^{ l+i_1\dots l+i_{p-r}}\wedge\varphi^{\overline{m}_1\dots \overline{m}_s}\wedge\varphi^{\overline{l+n_{1}}\dots\overline{l+n_{q-s}}}.
\end{align*}
Hence, by adding $\Omega_1$ and $(-1)^p\Omega_2$, we obtain
\begin{align*}
d\alpha&=\frac{s-r}{2}\left(\sum_{i=1}^k\varphi^{l+i}\right)\wedge\varphi^{j_1\dots j_r}\wedge\varphi^{ l+i_1\dots l+i_{p-r}}\wedge\varphi^{\overline{m}_1\dots \overline{m}_s}\wedge\varphi^{\overline{l+n_{1}}\dots\overline{l+n_{q-s}}}\\
&+\frac{r-s}{2}\left(\sum_{i=1}^k\varphi^{\overline{l+i}}\right)\wedge\varphi^{j_1\dots j_r}\wedge\varphi^{ l+i_1\dots l+i_{p-r}}\wedge\varphi^{\overline{m}_1\dots \overline{m}_s}\wedge\varphi^{\overline{l+n_{1}}\dots\overline{l+n_{q-s}}},
\end{align*}
which yields the thesis.\\
\end{proof}
As an immediate application of Lemma \ref{lemma:simpleform_KS}, we observe the following.
\begin{cor}
Let $\alpha$ be any invariant $(p,q)$-form on $X$, written as in \eqref{eq:simpleform_KS}. If $r=s$, then $d\alpha=0$.
\end{cor}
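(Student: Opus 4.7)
The plan is to observe that this corollary is an immediate consequence of the formula established in Lemma \ref{lemma:simpleform_KS}. That lemma gives the explicit expression
\[
d\alpha=\frac{s-r}{2}\left(\sum_{i=1}^k\varphi^{l+i}\right)\wedge\alpha+\frac{r-s}{2}\left(\sum_{i=1}^k\varphi^{\overline{l+i}}\right)\wedge\alpha
\]
for any invariant $(p,q)$-form $\alpha$ written in the normal form \eqref{eq:simpleform_KS}, where $r$ is the number of $(1,0)$-factors coming from $\{\varphi^1,\dots,\varphi^l\}$ and $s$ is the analogous count of $(0,1)$-factors from $\{\varphi^{\overline 1},\dots,\varphi^{\overline l}\}$.

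Thus I would simply substitute $r=s$ into this formula: both scalar coefficients $\tfrac{s-r}{2}$ and $\tfrac{r-s}{2}$ vanish simultaneously, and therefore $d\alpha=0$. No further computation is needed, and there is no real obstacle, since the full combinatorial work has already been carried out in the proof of Lemma \ref{lemma:simpleform_KS}; the point of isolating this statement as a corollary is merely to flag the bidegree-balance condition $r=s$ as a convenient closedness criterion that will be used later when analyzing harmonic representatives on K\"ahler solvmanifolds.
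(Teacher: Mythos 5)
Your proof is correct and is exactly the argument the paper intends: the corollary is stated as an immediate application of Lemma \ref{lemma:simpleform_KS}, and setting $r=s$ kills both coefficients $\tfrac{s-r}{2}$ and $\tfrac{r-s}{2}$ in the displayed formula, giving $d\alpha=0$. No differences from the paper's (implicit) reasoning.
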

We have then an explicit expression for a invariant K\"ahler metric on $X$.
\begin{cor}\label{cor:inv_Kahler_metric_KS}
The canonical Hermitian metric $g$ on $X$ with fundamental form $$\omega=\frac{i}{2}\sum_{j=1}^l\varphi^{j\overline{j}}+\frac{i}{2}\sum_{i=1}^k\varphi^{l+i\,\overline{l+i}}$$ is K\"ahler. 
\end{cor}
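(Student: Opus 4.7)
The plan is to apply Lemma \ref{lemma:simpleform_KS} directly to each summand in the expression of $\omega$. A Hermitian metric with fundamental form $\omega$ is K\"ahler if and only if $d\omega=0$; positivity and the $J$-invariance of $g$ are already encoded in the diagonal normal form of $\omega$ written in the unitary coframe $\{\varphi^1,\dots,\varphi^{l+k}\}$, so only the closedness of $\omega$ requires proof.

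First I would split $\omega$ as $\omega=\omega_1+\omega_2$, where $\omega_1=\tfrac{i}{2}\sum_{j=1}^l \varphi^{j\overline{j}}$ and $\omega_2=\tfrac{i}{2}\sum_{i=1}^k \varphi^{l+i\,\overline{l+i}}$, and analyze each summand separately. In the notation of \eqref{eq:simpleform_KS}, every form $\varphi^{j\overline{j}}$ with $j\in\{1,\dots,l\}$ has $r=1$ holomorphic indices from $\{1,\dots,l\}$ and $s=1$ antiholomorphic indices from the same range; thus $s-r=0$, and Lemma \ref{lemma:simpleform_KS} gives $d\varphi^{j\overline{j}}=0$. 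Similarly, each $\varphi^{l+i\,\overline{l+i}}$ has $r=s=0$, so Lemma \ref{lemma:simpleform_KS} yields $d\varphi^{l+i\,\overline{l+i}}=0$ (which is also immediate from the structure equations \eqref{eq:complex_forms_struct_eq_KS}, since $\varphi^{l+i}$ and $\varphi^{\overline{l+i}}$ are themselves $d$-closed).

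By linearity $d\omega=d\omega_1+d\omega_2=0$, hence $\omega$ is a closed, positive, real $(1,1)$-form, i.e., $g$ is K\"ahler on $X$. There is no genuine obstacle here: the content sits entirely in Lemma \ref{lemma:simpleform_KS}, and the only thing to observe is the combinatorial coincidence $r=s$ for each diagonal term $\varphi^{a\overline{a}}$, which forces the $d$-differential to vanish term by term.
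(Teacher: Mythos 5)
Your proposal is correct and follows essentially the same route as the paper: both apply Lemma \ref{lemma:simpleform_KS} term by term, observing that each diagonal summand $\varphi^{j\overline{j}}$ has $r=s$ (equal to $1$ for $j\leq l$ and $0$ for the $\varphi^{l+i\,\overline{l+i}}$ terms), so that $d$ annihilates every summand and $d\omega=0$. Nothing is missing.
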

\begin{proof}
By structure equations \eqref{eq:cplx-strct_KS} and Lemma \ref{lemma:simpleform_KS}, we have that $d\varphi^{l+i\,\overline{l+i}}=0$, for every $i\in\{1,\dots,k\}$, and
\[
d\varphi^{j\overline{j}}=\frac{1-1}{2}\sum_{i=1}^k\varphi^{l+i}\wedge\varphi^{j\overline{j}}+\frac{1-1}{2}\sum_{i=1}^k\varphi^{\overline{l+i}}\wedge\varphi^{j\overline{j}}=0,
\]
for every $j\in\{1,\dots, l\}$. Hence, $d\omega=0$.\\
\end{proof}

We now adapt the techniques in \cite[Section 2.5]{AngKas} to compute the cohomology of $X$. We preliminarly observe the following.
\begin{lemma}
$X$ is a complex solvmanifold of splitting type, i.e., $X$ is a solvmanifold $X =\Gamma\backslash G$ endowed with a $G$-left-invariant
complex structure $J$ such that $G$ is the semidirect product $\C^n\rtimes_{\phi} N$ so that:
\begin{enumerate}[label=(\roman{*}), ref=(\roman{*})]
\item $N$ is a simply connected $2m$-dimensional nilpotent Lie group endowed with
an $N$-left-invariant complex structure $J_N$ ; (denote the Lie algebras of $\C^n$ and $N$ by $\mathfrak{a}$
and, respectively, $\mathfrak{n}$),
\item For any $t\in \C^n$, it holds that $\phi(t)\in\text{GL}(N)$ is a holomorphic automorphism of $N$ with
respect to $J_N$ ;
\item $\phi$ induces a semisimple action on $\mathfrak{n}$;
\item $G$ has a lattice $\Gamma$; (then $\Gamma$ can be written as $\Gamma=\Gamma_{\C^n} \rtimes_{\phi} \Gamma_N$ such that $\Gamma_{\C^n}$ and $\Gamma_{N}$ are lattices of $\C^n$ and, respectively, $N$, and, for any $t\in\Gamma_{\C^n}$, it holds $\phi(t)(\Gamma_N)\subseteq\Gamma_N)$;
\item The inclusion $\bigwedge^{\bullet,\bullet}\mathfrak{n}_\C^*\hookrightarrow \mathcal{A}^{\bullet,\bullet}(\Gamma_N\backslash N)$ induces the isomorphism
$$
H^{\bullet}(\textstyle\bigwedge^{\bullet,\bullet}\mathfrak{n}_\C^*,\delbar)\simeq H^{\bullet,\bullet}_{\delbar}(\Gamma_N\backslash N).
$$
\end{enumerate}
\end{lemma}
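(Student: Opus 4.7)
My plan is to verify the five clauses directly from the explicit description of Hasegawa recalled above. The first step is to repackage the semidirect product $G=\C^l\rtimes_{\phi}\R^{2k}$ in the splitting-type framework. Using the complex structure $J$ of \eqref{eq:cplx-strct_KS}, the factor $\R^{2k}$ is identified with $\C^k$ via the coordinates $W=(z_{l+1},\dots,z_{l+k})$, so that the extended action \eqref{eq:semidirect_phi_KS} realises $G$ as $\C^{k}\rtimes_{\phi}\C^l$, with the base $\C^k$ acting on the nilpotent factor $N:=\C^l$. In the notation of the lemma I would thus set $n=k$, $m=l$, and take $J_N$ to be the standard complex structure of $\C^l$.

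With this identification, clauses (i)-(iii) follow by inspection. For (i), $N=\C^l$ is abelian, hence trivially nilpotent, simply connected of real dimension $2l$, and carries the canonical $N$-left-invariant complex structure $J_N$. For (ii), formula \eqref{eq:semidirect_phi_KS} shows that $\phi(W)$ is multiplication by a diagonal matrix of unit complex scalars on $\C^l$, hence $\C$-linear and therefore a biholomorphic automorphism of $(\C^l,J_N)$. For (iii), the induced Lie algebra representation $d\phi\colon\C^k\to\End(\mathfrak{n})$ is diagonal in the frame $\{Z_1,\dots,Z_l\}$ of $\mathfrak{n}^{1,0}$, so every endomorphism $d\phi(W)$ is semisimple.

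For clause (iv), Hasegawa's requirement that each $e^{\sqrt{-1}\eta^i}$ be an $s_i$-th root of unity forces $\phi(\Z^{2k})(\Z^{2l})\subseteq\Z^{2l}$, so $\Gamma=\Z^{2l}\rtimes_\phi\Z^{2k}$ is a lattice of $G$ splitting as $\Gamma_{\C^n}\rtimes_\phi \Gamma_N$ with $\Gamma_N=\Z^{2l}$ and $\Gamma_{\C^n}=\Z^{2k}$. For clause (v), the nilmanifold $\Gamma_N\backslash N$ is the complex torus $\Z^{2l}\backslash\C^l$, whose Dolbeault cohomology is classically computed by invariant forms -- equivalently, by Nomizu's theorem specialised to the abelian case -- giving the desired isomorphism $H^\bullet(\bigwedge^{\bullet,\bullet}\mathfrak{n}_{\C}^*,\delbar)\simeq H^{\bullet,\bullet}_{\delbar}(\Gamma_N\backslash N)$.

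I do not anticipate a real obstacle: the argument is essentially bookkeeping once the repackaging of the first paragraph is in place. The only point that requires a moment of care is the holomorphicity of $\phi(W)$ in (ii), and this is transparent from \eqref{eq:semidirect_phi_KS} because $\phi(W)$ acts on each coordinate of $\C^l$ by multiplication by a complex scalar of modulus one.
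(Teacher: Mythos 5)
Your proposal is correct and follows essentially the same route as the paper: identify $N=\C^l$ (the nilradical) with its standard complex structure, take $\C^n=\C^k$ as the acting factor, and verify clauses (i)--(v) by direct inspection of Hasegawa's explicit description, with (v) reducing to the classical computation of the Dolbeault cohomology of the complex torus $\Gamma_N\backslash N$ by invariant forms. The only cosmetic remark is that Nomizu's theorem concerns de Rham cohomology, but your primary justification for (v) (invariant forms compute $H_{\delbar}$ on a complex torus) is the one the paper uses and is correct.
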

\begin{proof}
We recall that $X$ is a solvmanifold
$\Gamma\backslash G$ endowed with a $G$-invariant complex structure $J$. The universal cover of $X$ is the Lie group $G=\C^{l}\rtimes_{\phi} \C^{k}$ which admits the lattice $\Gamma=\Z^{l}\rtimes_{\phi} \Z^k$. Note that $\C^l$ is the nilradical of $G$ with Lie algebra $\mathfrak{n}$ and $\C^k$ is the factor with Lie algebra $\mathfrak{a}$, so that the Lie algebra of $G$ decomposes as
\[
\mathfrak{g}=\mathfrak{n}\oplus \mathfrak{a}.
\]
\emph{(i)} The factor $\C^l$ is a  simply connected abelian Lie group of real dimension $2l$, i.e., it is clearly a  simply connected nilpotent Lie group of real dimension $2l$. The standard complex structure $J_{\C^l}$ of $\C^l$ is determined by the standard frame $\{\frac{\del}{\del z_j}\}_{j=1}^l$ and its dual frame $\{dz_j\}_{j=1}^l$ and it is clearly invariant under the action of $\C^l$ on itself by traslation, i.e., $J_{\C^l}$ is $\C^l$-left-invariant.\\
\emph{(ii)} For any $W=(z_{l+1},\dots, z_{l+k})\in\C^k$, the map $\phi(W)$ with expression \eqref{eq:semidirect_phi_KS} is clearly a holomorphic automorphism of $\C^l$ with respect to $J_{\C^l}$.\\
\emph{(iii)} For every $W=(z_{l+1},\dots, z_{l+k})\in\C^k$ map $\phi$ induces a semisimple action on $\mathfrak{n}$ by
\begin{equation}\label{eq:induced_phi}
(d\phi(W))_e=e^{\sqrt{-1}(\sum_{i=1}^k\mathfrak{Re}(z_{l+i})\eta^{2l+2i-1}-\mathfrak{Im}(z_{l+i})\eta^{2l+2i})}\mathbb{I}\in \text{Gl}(\mathfrak{n}).
\end{equation}
\emph{(iv)} The universal cover $G$ admits a lattice $\Gamma$ by hypotesis.\\
\emph{(v)} Since $\Z^l\backslash \C^l$ is biholomorphic to a complex $l$-dimensional torus $\mathbb{T}^l$, the inclusion $\bigwedge^{\bullet,\bullet}\mathfrak{n}_\C^*\hookrightarrow \mathcal{A}^{\bullet,\bullet}(\mathbb{T}^l)$ induces an isomorphism in the Dolbeault cohomology, i.e.,
\[
H^{\bullet}(\textstyle\bigwedge^{\bullet,\bullet}\mathfrak{n}_\C^*,\delbar)=(\bigwedge^{\bullet,\bullet}\mathfrak{n}_\C^*,\delbar\equiv 0)\simeq H_{\delbar}^{\bullet,\bullet}(\mathbb{T}^l).
\]

\end{proof}
We are in the position to apply the results for complex solvmanifolds of splitting type as in \cite[Section 2.5]{AngKas}. With respect to the standard frame $\frac{\del}{\del z_j}$ of $\C^l$-left-invariant $(1,0)$-vector fields, the induced action \eqref{eq:induced_phi} of $\phi$ on $\mathfrak{n}^{1,0}$ is given by
\[
\C^l\ni W=(z_{l+1},\dots,z_{l+k})\mapsto \chi(W)\mathbb{I},
\]
with $\chi$ the character of $\C^k$ determined by $\chi(W)=e^{\sqrt{-1}(\sum_{i=1}^k\mathfrak{Re}(z_{l+i})\eta^{2l+2i-1}-\mathfrak{Im}(z_{l+i})\eta^{2l+2i})}$. Then, $\chi$ can be extended to a character of $G$ and the set  
\begin{equation}\label{AK_complex_forms_KS}
\{\chi^{-1}dz^{1}, \dots, \chi^{-1}dz^l,\sqrt{-1}(\eta^{2l+1}+\sqrt{-1}\eta^{2l+2})dz^{l+1},\dots,\sqrt{-1}(\eta^{2l+2k-1}+\sqrt{-1}\eta^{2l+2k})dz^{l+k}\}
\end{equation}
is a coframe of $G$-left-invariant $(1,0)$-forms on $G$. More precisely, the coframe \eqref{AK_complex_forms_KS} coincides with the coframe of $G$-left-invariant $(1,0)$-forms defined in \eqref{eq:complex_forms_KS} with structure equations \eqref{eq:complex_forms_struct_eq_KS}, i.e.,
\begin{equation*}
\begin{cases}
\varphi^j=\chi^{-1}dz^j, \qquad j\in\{1,\dots, k\}\\
\varphi^{l+i}=\sqrt{-1}(\eta^{2l+2i-1}+\sqrt{-1}\eta^{2l+2i})dz^{l+i}, \qquad i\in\{1,\dots, k\}.
\end{cases}
\end{equation*}

By Lemma \cite[Lemma 2.12]{AngKas} (see also the proof of \cite[Lemma 2.2]{Kas}), the unique unitary characters $\beta,\gamma\in \text{Hom}(\C^k,\C^*)$ such that $\chi\cdot \beta^{-1}$ and $\overline{\chi}\cdot \gamma^{-1}$ are holomorphic characters are exactly
\[
\beta=\chi, \qquad \gamma=\overline{\chi}=\chi^{-1},
\]
since both $\chi$ and $\overline{\chi}$ are unitary and holomorphic characters. For any multi-index $J=(j_1, \dots, j_p)\subset \{1,\dots, l\}$ of length $|J|=p$ and set of characters $\{\delta_1,\dots,\delta_l\}$, we set $\delta_J:=\delta_{j_1}\cdot \dots \cdot  \delta_{j_p}$.

In the adapted notation, the bi-complex $B_{\Gamma}^{\bullet,\bullet}$ \cite[Theorem 2.13]{AngKas} which computes the Dolbeault cohomology is
\begin{align}
B_{\Gamma}^{p,q}=\C\left\langle\varphi^{J}\wedge\varphi^{l+I}\wedge\varphi^{\overline{M}}\wedge\varphi^{\overline{l+N}}:\,|J|+|I|=p,\, |M|+|N|=q, \,\, \chi_J\cdot \chi_M^{-1}\restrict{\Gamma}\equiv 1 \right\rangle\subset \textstyle\bigwedge^{p,q}\mathfrak{g}_\C^*.
\end{align}
Note that $\chi_J=\chi^{|J|}$ and $\chi_M^{-1}=\chi^{-|M|}$, so the condition
\[
\chi_J\cdot \chi_M^{-1}\restrict{\Gamma}\equiv 1,
\]
reduces to
\[
\chi^{|J|-|M|}\restrict{\Gamma}\equiv 1,
\]
which is satisfied if and only if $|J|=|M|$.
\begin{rem}
Note that the defining property of the complex $B_{\Gamma}^{\bullet,\bullet}$ actually does not depend on the choice of the lattice $\Gamma$, therefore we will omit the dependency on $\Gamma$ from the notation and we will write only $B^{\bullet,\bullet}$. 
\end{rem}
The complex $B^{\bullet,\bullet}$ is given by
\[
B^{p,q}=\C\left\langle\varphi^{J}\wedge\varphi^{l+I}\wedge\varphi^{\overline{M}}\wedge\varphi^{\overline{l+N}}:\,|J|+|I|=p,\, |M|+|N|=q,\,\, |J|=|M|\right\rangle.
\]
Note that the invariant diagonal K\"ahler metric $g$ on $X$ defines, by restriction, a metric for the complex $B^{\bullet,\bullet}$, which we still denote by $g$.
\begin{lemma}\label{lemma:complex_B_Gamma}
The exterior differential on the complex $B^{\bullet,\bullet}$ is identically zero, i.e, $d\restrict{B^{\bullet,\bullet}}\equiv 0$. Moreover, the complex $B^{\bullet,\bullet}$ is closed under conjugation, i.e., $\overline{B^{p,q}}=B^{q,p}$, and it is closed with respect to $\ast_g$ and has a structure of an algebra with respect to $\wedge$
\end{lemma}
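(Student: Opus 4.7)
The proof splits into four essentially independent verifications, each of which reduces to a direct check on the generators $\varphi^{J}\wedge\varphi^{l+I}\wedge\varphi^{\overline{M}}\wedge\varphi^{\overline{l+N}}$ satisfying $|J|=|M|$. My plan is as follows.

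For the vanishing of $d$ on $B^{\bullet,\bullet}$, I would simply invoke Lemma \ref{lemma:simpleform_KS}. For a generator $\alpha=\varphi^{J}\wedge\varphi^{l+I}\wedge\varphi^{\overline{M}}\wedge\varphi^{\overline{l+N}}$, the parameters $r$ and $s$ appearing in Lemma \ref{lemma:simpleform_KS} are precisely $|J|$ and $|M|$ respectively. The defining condition of $B^{p,q}$ forces $r=s$, so both coefficients $\frac{s-r}{2}$ and $\frac{r-s}{2}$ vanish, and thus $d\alpha=0$. By linearity, $d\restrict{B^{\bullet,\bullet}}\equiv 0$.

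For closure under complex conjugation, observe that
\[
\overline{\varphi^{J}\wedge\varphi^{l+I}\wedge\varphi^{\overline{M}}\wedge\varphi^{\overline{l+N}}} = \pm\,\varphi^{M}\wedge\varphi^{l+N}\wedge\varphi^{\overline{J}}\wedge\varphi^{\overline{l+I}},
\]
and the index condition for the right-hand side to lie in $B^{q,p}$ is $|M|=|J|$, which is exactly the original condition. Hence $\overline{B^{p,q}}=B^{q,p}$.

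For closure under $\ast_{g}$, I use that $g$ is the diagonal metric with respect to the unitary coframe $\{\varphi^1,\dots,\varphi^{l+k}\}$. Consequently, up to a unit complex scalar, $\ast_{g}$ sends a basis element to the complementary basis element with holomorphic and antiholomorphic indices exchanged, i.e.
\[
\ast_{g}\bigl(\varphi^{J}\wedge\varphi^{l+I}\wedge\varphi^{\overline{M}}\wedge\varphi^{\overline{l+N}}\bigr) = \pm\,\varphi^{M^c}\wedge\varphi^{l+N^c}\wedge\varphi^{\overline{J^c}}\wedge\varphi^{\overline{l+I^c}},
\]
where $J^c, M^c$ denote complements in $\{1,\dots,l\}$ and $I^c,N^c$ complements in $\{1,\dots,k\}$. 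Since $|M^c|=l-|M|$ and $|J^c|=l-|J|$, the condition $|J|=|M|$ passes directly to $|M^c|=|J^c|$, so the image lies in $B^{l+k-q,l+k-p}$.

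Finally, for the algebra structure under $\wedge$, the wedge of two generators is either zero (if any index is repeated) or a generator (up to sign), with the new multi-indices being the disjoint unions. The required equality $|J_{1}\sqcup J_{2}|=|M_{1}\sqcup M_{2}|$ follows by additivity from $|J_{i}|=|M_{i}|$ for $i=1,2$. Thus $B^{\bullet,\bullet}\wedge B^{\bullet,\bullet}\subseteq B^{\bullet,\bullet}$. No step here is a genuine obstacle: once the indexing convention of Lemma \ref{lemma:simpleform_KS} is matched to that of $B^{\bullet,\bullet}$, all four claims are immediate combinatorial observations.
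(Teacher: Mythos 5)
Your proposal is correct and follows essentially the same route as the paper: vanishing of $d$ via Lemma \ref{lemma:simpleform_KS} with $r=s$, and direct index bookkeeping for conjugation, the Hodge star, and the wedge product. The only cosmetic difference is that you use the $\C$-linear Hodge star (which also swaps holomorphic and antiholomorphic indices) while the paper works with the $\C$-antilinear one; since $B^{\bullet,\bullet}$ is closed under conjugation, the two versions of closure are equivalent.
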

\begin{proof}
Let $\alpha$ be an element of the natural basis $B^{p,q}$, i.e.,
\begin{equation}\label{eq:simple_form_B_gamma_KS}
\alpha=\varphi^{j_1\dots j_{r}}\wedge\varphi^{l+i_1\dots l+i_{p-r}}\wedge{\varphi}^{\overline{m}_1\dots \overline{m}_{r}}\wedge\varphi^{\overline{l+n_1}\dots \overline{l+n_{q-r}}},
\end{equation}
with respect to the usual decomposition \eqref{eq:simpleform_KS}. By Lemma \ref{lemma:simpleform_KS}, it is immediate that $d\alpha=0$, hence, by linearity of the exterior differential, we obtain $d\restrict{B^{\bullet,\bullet}}\equiv 0$. 

Note that for any $\alpha$ of the form \eqref{eq:simple_form_B_gamma_KS}, the complex conjugation acts as
$$
\overline{\alpha}=\varphi^{\overline{j}_1\dots \overline{j}_{r}}\wedge\varphi^{\overline{l+i_1}\dots \overline{l+i_{p-r}}}\wedge{\varphi}^{m_1\dots m_{r}}\wedge\varphi^{l+n_1\dots l+n_{q-r}}.
$$
Hence $\overline{\alpha}\in B^{q,p}$ and $\overline{B^{p,q}}=B^{q,p}$.

If $g$ is the invariant diagonal K\"ahler metric for $B^{\bullet,\bullet}$, the coframe $\{\varphi^1,\dots,\varphi^{l+k}\}$ is orthonormal with respect to $g$. Let us denote by $\ast:=\ast_g$ the $\C$-antilinear Hodge operator
\[
\ast\colon \textstyle\bigwedge^{p,q}\mathfrak{g}_{\C}^*\rightarrow\bigwedge^{l+k-p,l+k-q}\mathfrak{g}_\C^*.
\] 

In particular, if $\{j_{r+1},\dots,j_{l}\}$ and $\{m_{r+1},\dots,m_{l}\}$ are the complementary sets in $\{1,\dots, l\}$ of $\{j_1,\dots, j_r\}$ and, respectively, $\{m_1,\dots, m_{r}\}$, and $\{i_{p-r+1},\dots, i_{k}\}$ and $\{n_{q-r+1},\dots, n_k\}$ are the complementary sets in $\{1,\dots,k\}$ of $\{i_{1},\dots, i_{k}\}$ and, respectively, of $\{n_1,\dots, n_{q-r}\}$, then for any element of the natural basis of $B^{\bullet,\bullet}$ with expression as in \eqref{eq:simple_form_B_gamma_KS}, up to a sign, the Hodge $\ast$ operator operates as
\[
\ast\alpha=\varphi^{j_{r+1}\dots j_{l}}\wedge\varphi^{l+i_{p-r+1}\dots l+i_{k}}\wedge{\varphi}^{\overline{m}_{r+1}\dots \overline{m}_{l}}\wedge\varphi^{\overline{l+n_{q-r+1}}\dots \overline{l+n_{k}}},
\]
i.e., for every $\alpha\in B^{\bullet,\bullet}$, also $\ast\alpha\in B^{\bullet,\bullet}$. Moreover, taking any two elements $\alpha$ and $\beta$ of the natural basis of $B^{\bullet,\bullet}$, i.e.,
\[
\alpha=\varphi^{J}\wedge\varphi^{l+I}\wedge\varphi^{\overline{M}}\wedge\varphi^{\overline{l+N}},\qquad \beta=\varphi^{J'}\wedge\varphi^{l+I'}\wedge{\varphi}^{\overline{M}'}\wedge\varphi^{\overline{l+N'}}
\]
with $|J|+|I|=p, |M|+|N|=q$, and $|J|=|M|$, and $|J'|+|I'|=r, |M'|+|N'|=s$, and $|J'|=|M'|$, then the expression of the wedge product of $\alpha$ and $\beta$ is given by
\[
\alpha\wedge \beta=(-1)^{|J'|(|M|+|I|)+|I|'(|M|+|N|)}\varphi^{J}\wedge\varphi^{J'}\wedge\varphi^{l+I}\wedge\varphi^{l+I'}\wedge\varphi^{\overline{M}}\wedge\varphi^{\overline{M}'}\wedge\varphi^{\overline{l+N}}\wedge\varphi^{\overline{l+N'}}.
\]
Since $|J|+|J'|=|M|+|M|'$, the form $\alpha\wedge\beta\in B^{\bullet,\bullet}$, i.e, $(B^{\bullet,\bullet},\wedge)$ is an algebra.\\
\end{proof}
We are now finally ready to describe explicitly the Dolbeault cohomology of any K\"ahler solvmanifold and, hence, by the Hodge decomposition and the K\"ahler identities, also its de Rham cohomology, the Bott-Chern cohomology and the Aeppli cohomology.
\begin{theorem}\label{thm:cohom_KS}
Let $X=(\Gamma\backslash G,J)$ be a K\"ahler solvmanifold with $G=\C^l\rtimes\C^k$. Let also $\{\varphi^1,\dots, \varphi^l,\varphi^{l+1},\dots, \varphi^{l+k}\}$ be the coframe of $G$-invariant forms   on $X$ determined by the structure equations
\[
\begin{cases}
\displaystyle d\varphi^j=\frac{1}{2}\varphi^j\wedge\left(\sum_{i=1}^k\varphi^{l+i}-\varphi^{\overline{l+i}}\right), \qquad j=1,\dots,l,\\
\displaystyle d\varphi^{l+i}=0, \qquad i=1, \dots, k.
\end{cases}
\]
Then, the Dolbeault cohomology spaces of $X$ are
\begin{equation*}
H_{\delbar}^{p,q}(X)\cong\C\left\langle\left[\varphi^{j_1\dots j_r}\wedge\varphi^{l+i_1\dots l+i_{p-r}}\wedge\varphi^{\overline{m}_1\dots \overline{m}_r}\wedge\varphi^{\overline{l+n_1}\dots \overline{l+n_{q-r}}}\right]\right\rangle_{\max(0,p-k,q-k)\leq r\leq \min(p,q)}
\end{equation*}
and the de Rham cohomology spaces of $X$ 
\[
H_{dR}^{s}(X;\C)\cong \bigoplus_{p+q=s}\C\left\langle\left[\varphi^{j_1\dots j_r}\wedge\varphi^{l+i_1\dots l+i_{p-r}}\wedge\varphi^{\overline{m}_1\dots \overline{m}_r}\wedge\varphi^{\overline{l+n_1}\dots \overline{l+n_{q-r}}}\right]\right\rangle_{\max(0,p-k,q-k)\leq r\leq \min(p,q)}.
\]

In particular, the Hodge numbers of $X$ are
\begin{equation}\label{eq:hodge_numbers_KS}
h_{\delbar}^{p,q}=\sum_{r=\max(0,p-k,q-k)}^{\min(p,q)}\begin{pmatrix}
l\\
r
\end{pmatrix}\left[\begin{pmatrix}
k\\
p-r
\end{pmatrix}+\begin{pmatrix}
k\\
q-r
\end{pmatrix}\right].
\end{equation}
\end{theorem}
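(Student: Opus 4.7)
The plan is to leverage the Angella--Kasuya framework \cite[Theorem 2.13]{AngKas} for complex solvmanifolds of splitting type, whose hypotheses have already been verified for $X = \Gamma \backslash G$ in the preceding discussion. That theorem identifies the Dolbeault cohomology of $X$ with the $\delbar$-cohomology of the finite-dimensional subcomplex $B^{\bullet,\bullet} \subset \bigwedge^{\bullet,\bullet} \mathfrak{g}_\C^*$, whose natural basis consists of invariant $(p,q)$-forms $\varphi^{J}\wedge\varphi^{l+I}\wedge\varphi^{\overline{M}}\wedge\varphi^{\overline{l+N}}$ with $|J|+|I|=p$, $|M|+|N|=q$, and $|J|=|M|$.

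The decisive observation is Lemma \ref{lemma:complex_B_Gamma}: the full exterior differential $d$ vanishes identically on $B^{\bullet,\bullet}$, and hence so does $\delbar$. Consequently one obtains the identification
$$H^{p,q}_{\delbar}(X) \cong H^{p,q}(B^{\bullet,\bullet}, \delbar) = B^{p,q}$$
with no quotienting required. To match the statement it then remains to parametrize the basis of $B^{p,q}$ by the integer $r := |J| = |M|$: the constraints $|I| = p-r \in [0,k]$ and $|N| = q-r \in [0,k]$ force $r$ to range over $\max(0, p-k, q-k) \leq r \leq \min(p,q)$, which is precisely the index set appearing in the statement.

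For the de Rham cohomology I would then invoke the Hodge decomposition, which is available because Corollary \ref{cor:inv_Kahler_metric_KS} provides an explicit invariant K\"ahler metric on $X$: one has
$$H^s_{dR}(X;\C) \cong \bigoplus_{p+q = s} H^{p,q}_{\delbar}(X),$$
and substituting the description of $H^{p,q}_{\delbar}(X)$ obtained above yields the claimed formula. The Hodge numbers are then computed by counting basis elements of $B^{p,q}$: for each admissible $r$, one independently chooses $J, M \subset \{1,\dots, l\}$ of cardinality $r$ and $I, N \subset \{1,\dots,k\}$ of cardinalities $p-r$ and $q-r$ respectively, producing $\binom{l}{r}^{2}\binom{k}{p-r}\binom{k}{q-r}$ elements at level $r$, which summed over $r$ yields $h_{\delbar}^{p,q}$.

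The main technical work has in fact already been absorbed into the preparatory lemmas: establishing that $X$ is of splitting type with $\phi$ acting semisimply, determining the unitary characters $\beta = \chi$ and $\gamma = \chi^{-1}$ to pin down $B^{\bullet,\bullet}$, and verifying via the bidegree count of Lemma \ref{lemma:simpleform_KS} that $d$ kills $B^{\bullet,\bullet}$. Given those ingredients, no further analytic difficulty remains; the argument is essentially combinatorial bookkeeping, so I do not anticipate an obstacle beyond the careful enumeration of admissible multi-indices.
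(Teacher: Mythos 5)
Your proposal follows essentially the same route as the paper: reduce to the Angella--Kasuya subcomplex $B^{\bullet,\bullet}$ (the paper invokes \cite[Theorem 2.14]{AngKas} for the quasi-isomorphism $B^{\bullet,\bullet}\hookrightarrow\mathcal{A}^{\bullet,\bullet}(X)$), observe via Lemma \ref{lemma:complex_B_Gamma} that $d$, hence $\delbar$, vanishes identically on $B^{\bullet,\bullet}$ so that $H^{p,q}(B^{\bullet,\bullet},\delbar)=B^{p,q}$, enumerate the basis by $r=|J|=|M|$, and pass to de Rham cohomology by the Hodge decomposition coming from the invariant K\"ahler metric of Corollary \ref{cor:inv_Kahler_metric_KS}. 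No step of your argument is missing or incorrect.

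The one substantive divergence is the final count. Your enumeration gives
\[
\dim_\C B^{p,q}=\sum_{r=\max(0,p-k,q-k)}^{\min(p,q)}\binom{l}{r}^{2}\binom{k}{p-r}\binom{k}{q-r},
\]
since $J$ and $M$ are chosen independently in $\{1,\dots,l\}$ with $|J|=|M|=r$. This is the count that actually follows from the basis description of $B^{p,q}$, but it agrees neither with the formula \eqref{eq:hodge_numbers_KS} in the statement (a bracketed sum of two binomials) nor with the product-of-sums expression appearing at the end of the paper's own proof. A quick check with $l=k=1$ and $(p,q)=(1,1)$ (a hyperelliptic-surface-type quotient) gives a two-element basis $\{\varphi^{1\overline{1}},\varphi^{2\overline{2}}\}$, matching your count of $2$, whereas \eqref{eq:hodge_numbers_KS} yields $4$. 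So your derivation is sound and in fact corrects the enumeration; just be aware that, as written, it establishes a corrected version of \eqref{eq:hodge_numbers_KS} rather than the formula as printed, which appears to contain an error.
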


\begin{proof}
Let us start by fixing $g$ the invariant diagonal  K\"ahler metric on $B^{\bullet,\bullet}$. Then, by Lemma \ref{lemma:complex_B_Gamma}, if $\alpha\in B^{\bullet,\bullet}$, then $\delbar\alpha=0$ and $\delbar\ast\alpha=0$, hence
\begin{equation}\label{eq:thm_coom_KS_1}
B^{\bullet,\bullet}=\mathcal{H}_{\delbar}^{\bullet,\bullet}(B^{\bullet,\bullet})\cong H^{\bullet,\bullet}(B^{\bullet,\bullet},\delbar),
\end{equation}
where the isomorphism is given by the projection onto the Dolbeault cohomology of the complex $B^{\bullet,\bullet}$. By \cite[Theorem 2.14]{AngKas}, the inclusion $B^{\bullet,\bullet}\hookrightarrow \mathcal{A}^{\bullet,\bullet}X$ induces the isomorphism in Dolbeault cohomology
\begin{equation}\label{eq:thm_coom_KS_2}
H^{\bullet,\bullet}(B^{\bullet,\bullet},\delbar)\cong H_{\delbar}^{\bullet,\bullet}(X),
\end{equation}
so we obtain that each for every $p,q$, for the Dolbeault cohomology spaces $H_{\delbar}^{p,q}(X)$ of $X$ satisfy
\begin{equation}\label{eq:dolb_cohomology_KS}
H_{\delbar}^{p,q}(X)\cong\C\left\langle\left[\varphi^{j_1\dots j_r}\wedge\varphi^{l+i_1\dots l+i_{p-r}}\wedge\varphi^{\overline{m}_1\dots \overline{m}_r}\wedge\varphi^{\overline{l+n_1}\dots \overline{l+n_{q-r}}}\right]\right\rangle_{\max(0,p-k,q-k)\leq r\leq \min(p,q)} ,
\end{equation}
and, by the Hodge decomposition, for every $k$, the de Rham cohomology of $X$ is
\begin{equation}\label{eq:deRham_cohomology_KS}
H_{dR}^{k}(X;\C)=\bigoplus_{p+q=k}\left\langle\left[\varphi^{j_1\dots j_r}\wedge\varphi^{l+i_1\dots l+i_{p-r}}\wedge\varphi^{\overline{m}_1\dots \overline{m}_r}\wedge\varphi^{\overline{l+n_1}\dots \overline{l+n_{q-r}}}\right]\right\rangle_{\max(0,p-k,q-k)\leq r\leq \min(p,q)}.
\end{equation}
As a straightforward consequence, the explicit description \eqref{eq:dolb_cohomology_KS} shows that the Hodge numbers of $X$ are given by
\begin{equation*}
h_{\delbar}^{p,q}=\sum_{r=\max(0,p-k,q-k)}^{\min(p,q)}\begin{pmatrix}
l\\
r
\end{pmatrix}\begin{pmatrix}
k\\
p-r
\end{pmatrix}\cdot \sum_{r=\max(0,p-k,q-k)}^{\max(p,q)}\begin{pmatrix}
l\\
r
\end{pmatrix}\begin{pmatrix}
k\\
q-r
\end{pmatrix},
\end{equation*}
which concludes the proof.\\
\end{proof}
Since $X$ is compact K\"ahler manifold,  $X$ satisfies the $\del\delbar$-lemma. Therefore, we have the following isomorphisms between the complex cohomologies of $X$, i.e.,
\[H_{\delbar}^{\bullet,\bullet}(M)\cong H_{BC}^{\bullet,\bullet}(X)\cong H_A^{\bullet,\bullet}(X).
\]
Thus, from Theorem \ref{thm:cohom_KS} we immediately obtain the following.
\begin{cor}
\[
H_{BC}^{p,q}(X)=H_A^{p,q}(X)\cong\C\left\langle\left[\varphi^{j_1\dots j_r}\wedge\varphi^{l+i_1\dots l+i_{p-r}}\wedge\varphi^{\overline{m}_1\dots \overline{m}_r}\wedge\varphi^{\overline{l+n_1}\dots \overline{l+n_{q-r}}}\right]\right\rangle_{\max(0,p-k,q-k)\leq r\leq \min(p,q)}.
\]
\end{cor}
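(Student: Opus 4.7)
The plan is to exploit the $\del\delbar$-lemma together with the explicit Dolbeault computation of Theorem \ref{thm:cohom_KS}. Since $X$ is a compact K\"ahler manifold (via the invariant metric of Corollary \ref{cor:inv_Kahler_metric_KS}), it satisfies the $\del\delbar$-lemma, so the natural projections induce canonical isomorphisms
\[
H_{BC}^{p,q}(X)\;\cong\;H_{\delbar}^{p,q}(X)\;\cong\;H_{A}^{p,q}(X).
\]
Thus the problem reduces to exhibiting, for each bidegree $(p,q)$, an explicit set of representatives that simultaneously span the Bott-Chern and Aeppli cohomologies.

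First I would take any basis element
\[
\alpha:=\varphi^{j_1\dots j_r}\wedge\varphi^{l+i_1\dots l+i_{p-r}}\wedge\varphi^{\overline{m}_1\dots \overline{m}_r}\wedge\varphi^{\overline{l+n_1}\dots \overline{l+n_{q-r}}}\;\in\;B^{p,q},
\]
with $\max(0,p-k,q-k)\leq r \leq \min(p,q)$, and observe that by Lemma \ref{lemma:complex_B_Gamma} the exterior differential $d$ vanishes identically on $B^{\bullet,\bullet}$. In particular $\del\alpha=\delbar\alpha=0$, so $\alpha$ defines a class in $H_{BC}^{p,q}(X)$; trivially $\del\delbar\alpha=0$, so $\alpha$ also defines a class in $H_{A}^{p,q}(X)$. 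This gives well-defined linear maps $B^{p,q}\to H_{BC}^{p,q}(X)$ and $B^{p,q}\to H_{A}^{p,q}(X)$.

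Next, composing $B^{p,q}\to H_{BC}^{p,q}(X)$ with the natural projection $H_{BC}^{p,q}(X)\to H_{\delbar}^{p,q}(X)$ yields the map $B^{p,q}\to H_{\delbar}^{p,q}(X)$, which is an isomorphism by Theorem \ref{thm:cohom_KS}. Since the $\del\delbar$-lemma forces $H_{BC}^{p,q}(X)\to H_{\delbar}^{p,q}(X)$ to be an isomorphism, the map $B^{p,q}\to H_{BC}^{p,q}(X)$ is itself an isomorphism. An identical argument, using the dual projection $H_{A}^{p,q}(X)\to H_{\delbar}^{p,q}(X)$ arising from $\del\delbar$-lemma, shows that $B^{p,q}\to H_{A}^{p,q}(X)$ is an isomorphism as well.

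There is no real obstacle here: the work was carried out in Theorem \ref{thm:cohom_KS}, and the remaining content is the formal consequence of the $\del\delbar$-lemma. The only small check is that the given representatives actually lie in the spaces of Bott-Chern and Aeppli cocycles, which follows at once from $d\restrict{B^{\bullet,\bullet}}\equiv 0$.
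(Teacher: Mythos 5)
Your proposal is correct and follows essentially the same route as the paper: the authors likewise deduce the corollary from the $\del\delbar$-lemma (valid since $X$ is compact K\"ahler), which identifies $H_{BC}^{\bullet,\bullet}(X)\cong H_{\delbar}^{\bullet,\bullet}(X)\cong H_{A}^{\bullet,\bullet}(X)$, and then read off the representatives from Theorem \ref{thm:cohom_KS}. Your additional verification that the elements of $B^{p,q}$ are genuine Bott-Chern and Aeppli cocycles (via $d\restrict{B^{\bullet,\bullet}}\equiv 0$) is a harmless and slightly more careful elaboration of the same argument.
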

Moreover, by the K\"ahler identities, it turns out that the spaces of harmonic forms with respect to the complex Laplacians coincide, i.e.,
\[
\mathcal{H}_{\Delta}^{\bullet,\bullet}(X)=\mathcal{H}_{\Delta_{\delbar}}^{\bullet,\bullet}(X)=\mathcal{H}_{\Delta_ {BC}}^{\bullet,\bullet}(X)=\mathcal{H}_{\Delta_{A}}^{\bullet,\bullet}(X).
\]
Therefore, the following result holds for any type of geometric formality (Kotschick, Dolbeault, Bott-Chern, and $ABC$).
\begin{theorem}\label{thm:formality_KS}
Let $X$ be any K\"ahler solvmanifold. Then $X$ is geometrically formal.
\end{theorem}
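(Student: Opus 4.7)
The plan is to show that, for the invariant diagonal K\"ahler metric $g$ of Corollary \ref{cor:inv_Kahler_metric_KS}, the finite-dimensional subspace $B^{\bullet,\bullet}\subset \mathcal{A}^{\bullet,\bullet}(X)$ coincides with the space $\mathcal{H}^{\bullet,\bullet}_\Delta(X)$ of $g$-harmonic forms, and then to read off geometric formality directly from Lemma \ref{lemma:complex_B_Gamma}. First I would verify the inclusion $B^{p,q}\subseteq \mathcal{H}^{p,q}_\Delta(X)$: by Lemma \ref{lemma:complex_B_Gamma} the differential $d$ vanishes identically on $B^{\bullet,\bullet}$, and the Hodge operator $\ast_g$ preserves $B^{\bullet,\bullet}$, so $d^\ast=\pm\ast_g d\ast_g$ also vanishes on $B^{\bullet,\bullet}$; hence each element of $B^{\bullet,\bullet}$ is both closed and co-closed, therefore harmonic.

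Next I would upgrade this to an equality by a dimension count. The natural composition
\[
B^{p,q}\ \hookrightarrow\ \mathcal{H}^{p,q}_\Delta(X)\ \xrightarrow{\sim}\ H^{p,q}_{\bar\partial}(X)
\]
(the second arrow being the classical Hodge isomorphism, applicable since $g$ is K\"ahler) agrees tautologically with the isomorphism of Theorem \ref{thm:cohom_KS}, because every element of $B^{p,q}$ is $\bar\partial$-closed and is sent to its Dolbeault class on both sides. Since the outer composition is an isomorphism, so are both arrows, giving $B^{p,q}=\mathcal{H}^{p,q}_\Delta(X)$.

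With this identification in hand, the wedge-product statement of Lemma \ref{lemma:complex_B_Gamma}, namely that $B^{\bullet,\bullet}$ is a subalgebra of $(\mathcal{A}^{\bullet,\bullet}(X),\wedge)$, is exactly the statement that $\mathcal{H}^{\bullet,\bullet}_\Delta(X)$ is closed under $\wedge$. On the K\"ahler manifold $X$ the K\"ahler identities yield the equalities $\mathcal{H}^{\bullet,\bullet}_{\Delta_{\bar\partial}}(X)=\mathcal{H}^{\bullet,\bullet}_{\Delta_{BC}}(X)=\mathcal{H}^{\bullet,\bullet}_{\Delta_A}(X)=\mathcal{H}^{\bullet,\bullet}_\Delta(X)$, so $g$ is simultaneously geometrically formal in the senses of Kotschick, Dolbeault, Bott-Chern, and Aeppli-Bott-Chern; in particular $X$ is geometrically-$BC$-formal and \emph{ABC}-geometrically formal, so every $ABC$-Massey product of any length vanishes.

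The only place where care is required, and the step I would check most carefully, is that the Hodge star $\ast_g$ associated to the chosen metric really does map $B^{\bullet,\bullet}$ into itself; this relies on the particular structure of $g$ and on the symmetric role of $\{\varphi^1,\dots,\varphi^l\}$ versus $\{\varphi^{\overline 1},\dots,\varphi^{\overline l}\}$ in the defining condition $|J|=|M|$, as recorded in Lemma \ref{lemma:complex_B_Gamma}. Once this is granted, all remaining steps are standard Hodge theory on a K\"ahler manifold combined with the cohomology description of Theorem \ref{thm:cohom_KS}.
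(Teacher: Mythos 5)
Your proposal is correct and follows essentially the same route as the paper: identify $B^{\bullet,\bullet}$ with the space of harmonic forms (using that $d$ and $\ast_g$ preserve $B^{\bullet,\bullet}$ by Lemma \ref{lemma:complex_B_Gamma}, and upgrading the inclusion to an equality via the cohomology isomorphisms of Theorem \ref{thm:cohom_KS}), then invoke the $\wedge$-closedness of $B^{\bullet,\bullet}$ and the K\"ahler identities to get formality for all four Laplacians. Your version merely makes explicit the harmonicity check and the dimension-count step that the paper compresses into one sentence.
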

\begin{proof} 
Since every map of \eqref{eq:thm_coom_KS_1} and \eqref{eq:thm_coom_KS_2} is induced by either an inclusion or a projection in cohomology, we obtain that the space of harmonic forms of $X$ coincides with $B^{\bullet,\bullet}$, i.e.,
\[
\mathcal{H}_{\square}^{\bullet,\bullet}(X)=B^{\bullet,\bullet}, \qquad \square\in\{\Delta,\Delta_{\delbar},\Delta_{BC},\Delta_A\}.
\]
By Lemma \ref{lemma:complex_B_Gamma}, the pair $(B^{\bullet,\bullet},\wedge)$ is an algebra, so that also the spaces
$$
(\mathcal{H}_{\square}^{\bullet,\bullet}(X),\wedge), \qquad \square\in\{\Delta, \Delta_{\delbar},\Delta_{BC},\Delta_A\}
$$
are an algebra, which proves that $X$ is geometrically formal.
\end{proof}
As a direct consequence of Theorem \ref{thm:formality_KS}, we are able to prove that $ABC$-Massey product are an obstruction for the existence of a K\"ahler metric on solvmanifolds.
\begin{cor}\label{cor:ABC_prod_KS}
Let $X$ be any K\"ahler solvmanifold. Then every $ABC$-Massey product vanishes.
\end{cor}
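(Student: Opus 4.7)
The plan is to bootstrap directly from Theorem \ref{thm:formality_KS}, which already identifies the space of Bott-Chern harmonic forms on $X$ with the subalgebra $B^{\bullet,\bullet}\subset\bigwedge^{\bullet,\bullet}\mathfrak{g}_\C^*$, closed under $\wedge$. The key observation is that on such a manifold every defining equation of a Massey product trivializes when harmonic representatives are selected.

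First, I would fix arbitrary Bott-Chern classes $[\alpha_1],\ldots,[\alpha_n]\in H_{BC}^{\bullet,\bullet}(X)$ entering the data of an $n$-fold $ABC$-Massey product, and choose as representatives the unique Bott-Chern harmonic $\alpha_i\in B^{\bullet,\bullet}$. By Lemma \ref{lemma:complex_B_Gamma} any wedge product $\alpha_{i_1}\wedge\cdots\wedge\alpha_{i_r}$ again lies in $B^{\bullet,\bullet}$, hence is $d$-closed and $\partial\bar\partial\ast$-closed, and therefore is itself Bott-Chern harmonic. The crucial step is then: a Bott-Chern harmonic form which is $\partial\bar\partial$-exact must vanish, because the harmonic representative of the zero class in $H_{BC}^{\bullet,\bullet}$ is unique and equal to $0$.

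Now apply this to the Massey product data. For the triple case, the hypothesis $\alpha\wedge\beta=(-1)^{p+q}\partial\bar\partial f_{\alpha\beta}$ and $\beta\wedge\gamma=(-1)^{r+s}\partial\bar\partial f_{\beta\gamma}$ forces $\alpha\wedge\beta=0$ and $\beta\wedge\gamma=0$ by the previous step, so we may select $f_{\alpha\beta}=0$ and $f_{\beta\gamma}=0$ as valid primitives. Plugging these into the representative
\[
(-1)^{p+q}\alpha\wedge f_{\beta\gamma}-(-1)^{r+s}f_{\alpha\beta}\wedge\gamma
\]
yields the zero Aeppli class, so $\langle[\alpha],[\beta],[\gamma]\rangle_{ABC}=0$. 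For the quadruple case, the same argument gives $\alpha\wedge\beta=\beta\wedge\gamma=\gamma\wedge\delta=0$, allowing the choice $x=y=z=0$; then the secondary equations $x\gamma-\alpha y=\partial\eta+\bar\partial\eta'$ and $y\delta-\beta z=\partial\xi+\bar\partial\xi'$ read $0=\partial\eta+\bar\partial\eta'$ and $0=\partial\xi+\bar\partial\xi'$, so $\eta=\eta'=\xi=\xi'=0$ is a permissible choice, making the representative \eqref{eq:ABC_quad} vanish identically.

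For an arbitrary $n$-fold $ABC$-Massey product, one proceeds inductively in the same fashion through the Schweitzer complex $S_{p,q}^\bullet$: at each stage the defining $\partial\bar\partial$-primitives can be chosen to be zero because the forms they primitivize are harmonic and $\partial\bar\partial$-exact, hence zero. Since the entire chain of auxiliary forms can be taken identically zero, the final cocycle representing the Massey product vanishes, and therefore so does its class in the appropriate quotient of $H_{S_{p,q}}^{\bullet}$. There is no serious obstacle here — once Theorem \ref{thm:formality_KS} is in hand, the corollary is essentially formal, with the only subtlety being a careful bookkeeping of the $n$-fold inductive step, which I would handle by the uniqueness of harmonic representatives in $H_{BC}$ together with the algebra property of $B^{\bullet,\bullet}$.
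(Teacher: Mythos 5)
Your argument is correct, but it takes a different route from the paper. The paper disposes of the corollary in three lines by citation: Theorem \ref{thm:formality_KS} plus \cite[Remark 4.7]{MS} gives weak formality, the $\del\delbar$-lemma upgrades this to strong formality, and \cite[Proposition 4.4]{MS} then yields the vanishing of every $ABC$-Massey product. You instead unpack the mechanism behind those citations: harmonic representatives lie in $B^{\bullet,\bullet}$, the algebra property of Lemma \ref{lemma:complex_B_Gamma} keeps their products harmonic, and the orthogonality $\mathcal{H}_{\Delta_{BC}}^{\bullet,\bullet}\cap\im\del\delbar=\{0\}$ from Schweitzer's decomposition forces every form that must be $\del\delbar$-primitivized to vanish, so the all-zero defining system is admissible and the final cocycle is identically zero. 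This is essentially the proof of \cite[Proposition 4.4]{MS} (and of the result of \cite{TarTom} in the triple case) specialized to $X$; what it buys is a self-contained argument that makes the role of geometric $BC$-formality transparent, at the cost of having to verify the defining-system bookkeeping yourself. The only place where your write-up is thinner than it should be is the general $n$-fold inductive step: to make ``the primitives can be chosen to be zero at each stage'' rigorous you should observe that, in the $n$-fold defining system, every expression requiring a primitive at stage $m\geq 3$ is a signed sum in which each summand contains at least one stage-$(<m)$ primitive as a wedge factor, so once the lowest-stage products $\alpha_i\wedge\alpha_{i+1}$ are shown to vanish (harmonic and $\del\delbar$-exact, hence zero) the zero choice propagates consistently through the whole Schweitzer-complex tower, exactly as you verified by hand for $n=3,4$. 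With that remark added, the proof is complete and equivalent in content to the paper's.
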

\begin{proof}
By Theorem \ref{thm:formality_KS}  and \cite[Remark 4.7]{MS}, $X$ is also weakly formal in the sense of \cite{MS}. Since $X$ satisfies the $\del\delbar$-lemma, then $X$ is also strongly formal. But by \cite[Proposition 4.4]{MS}, we can conclude that every $ABC$-Massey product vanishes.
\end{proof}

\section{Quadruple ABC-Massey product}\label{sec:6}
In this section, we explicitly construct two families of solvmanifolds admitting a non vanishing quadruple $ABC$-Massey product.

Let $G':=(\C^4,\ast')$ be the complex Lie group with operation $\ast':\C^4\times \C^4\rightarrow\C^4$ defined by
\begin{equation*}
^t(y_1,y_2,y_3,y_4)\ast'\, ^t(z_1,z_2,z_3,z_4):=\,^t(z_1+y_1,e^{-y_1}z_2+y_2,e^{y_1}z_3+y_3,z_4+w_4+e^{y_1}y_2z_3).
\end{equation*}
More precisely, $G'$ is the element $6$ of the characterization of $4$-dimensional complex Lie groups by Nakamura in \cite[Section 6]{Nak}. Let now $G=(\C^4,\ast)$ be the complex Lie group endowed with the operation $\ast\colon \C^4\times\C^4\rightarrow\C^4$ defined by
\[
^t(y_1,y_2,y_3,y_4)\ast\, ^t(z_1,z_2,z_3,z_4):=\,^t(z_1+y_1,e^{-y_1}z_2+y_2,e^{y_1}z_3+y_3,z_4+y_4+\frac{1}{2}e^{y_1}y_2z_3-\frac{1}{2}e^{-y_1}y_3z_2).
\]
Then, one can see that $G$ and $G'$ are isomorphic as complex Lie groups via the invertible Lie group homomorphism $F\colon G'\rightarrow G$ given by
\begin{equation}\label{eq:map_F_quad}
F(z_1,z_2,z_3,z_4):=(z_1,z_2,z_3,z_4-\frac{1}{2}z_2z_3).
\end{equation}
From now on, we choose to use the presentation $G=(\C^4,\ast)$. Note that the operation $\ast$ of $G$ can be written as
\begin{align}\label{eq:multiplication_quad}
& ^t(y_1,y_2, y_3,y_4) \ast\, ^t(z_1,z_2,z_3,z_4):=\\
&\,^t\left(z_1+y_1,\,^t\left(\begin{pmatrix}
e^{-y_1} & 0\\
0 & e^{y_1}
\end{pmatrix}\begin{pmatrix}
z_2\\z_3 
\end{pmatrix}+ \begin{pmatrix}
y_2\\ y_3
\end{pmatrix}\right),y_4+\frac{1}{2} \,\, (y_2,y_3)\begin{pmatrix}
0 & 1\\ -1 & 0
\end{pmatrix}\begin{pmatrix}
e^{-y_1} & 0\\
0 & e^{y_1}
\end{pmatrix}\begin{pmatrix}
z_2 \\ z_3
\end{pmatrix}+z_4\right)\nonumber.
\end{align}
It is straightforward to check that a coframe of left-invariant holomorphic forms on $G$ is given in the standard coordinates $\{z_1,z_2,z_3,z_4\}$ of $\C^4$ by
\begin{equation}\label{eq:struct_eq_quad}
\varphi^1=dz_1, \quad \varphi^2=e^{z_1}dz_2, \quad \varphi^3=e^{-z_1}dz_3, \quad \varphi^4=dz_4-\frac{1}{2}z_2dz_3-\frac{1}{2}z_3dz_2,
\end{equation}
and they satisfy the structure equations
\begin{equation*}
d\varphi^1=0,\qquad 
d\varphi^2=\varphi^{12},\qquad
d\varphi^3=-\varphi^{13},\qquad
d\varphi^4=-\varphi^{23}.
\end{equation*}
Note that the dual $\mathfrak{g}_{\C}^*$ of the complexified Lie algebra $\mathfrak{g}_{\C}$ of $G$ decomposes as
\[
\mathfrak{g}_{\C}^*=\mathfrak{g}_{+}^{*}\oplus\mathfrak{g}_-^*, 
\]
where we denote by $\mathfrak{g}_+^*=\langle\varphi^1, \varphi^2, \varphi^3, \varphi^4\rangle$ the subspace of holomorphic left-invariant $1$-forms on $G$ and by $\mathfrak{g}_-^*=\langle \varphi^{\overline{1}},\varphi^{\overline{2}},\varphi^{\overline{3}},\varphi^{\overline{4}},\rangle$ the subspace of anti-holomorphic left-invariant $1$-forms on $G$.

The frame $\{Z_1,Z_2,Z_3,Z_4\}$ of $\mathfrak{g}_+$ dual to \eqref{eq:struct_eq_quad} is then given in holomorphic coordinates by
\[
Z_1=\frac{\del}{\del z_1}, \quad Z_2=e^{-z_1}\frac{\del}{\del z_2}+e^{-z_1}z_3\frac{\del}{\del z_4}, \quad Z_3=e^{z_1}\frac{\del}{\del z_3}+ e^{z_1}z_2\frac{\del}{\del z_4}, \quad Z_4=\frac{\del}{\del z_4},
\]
and by dualizing \eqref{eq:struct_eq_quad}, the only non vanishing brackets are
\begin{equation}\label{eq:bracket_quad}
[Z_1,Z_2]=-Z_2, \quad [Z_1,Z_3]=Z_3, \quad [Z_2,Z_3]=Z_4.
\end{equation}
By complex conjugation, we obtain the frame $\{\overline{Z}_1,\overline{Z}_2,\overline{Z}_3,\overline{Z}_4\}$ for $\mathfrak{g}_-$ and their brackets, so to obtain the analogous decomposition
\[
\mathfrak{g}_{\C}=\mathfrak{g}_+\oplus \mathfrak{g}_-.
\]
We recover the dual $\mathfrak{g}^*=\langle e^1,\dots, e^8\rangle_\R$ of the underying real Lie algebra of $G$ by defining the complex structure $J$ as $Je^{2j}:=e^{2j-1},$ for $j\in\{1,\dots, 4\}$, so that $$\varphi^j=:e^{2j-1}+ie^{2j},$$ for $j\in\{1,\dots, 4\}$. In particular, the real structure  equations of $\mathfrak{g}$ are
\begin{align*}
de^1=0, && de^2=0, && de^3=e^{13}-e^{24}, && de^4=e^{14}+e^{23},\\
de^5=-e^{15}+e^{26}, && de^6=-e^{16}-e^{25}, && de^7=-e^{35}+e^{46}, && de^8=-e^{36}-e^{45},\nonumber
\end{align*}
and the real Lie algebra $\mathfrak{g}$ of $G$ is then spanned by the left-invariant vector fields $e_1,\dots, e_8$ on $G$ which satisfy the bracket relations
\begin{align*}
&[e_1,e_3]=-e_3,\quad [e_1,e_4]=-e_4,\quad [e_1,e_5]=e_5,\quad [e_1,e_6]=e_6,\\
&[e_2,e_3]=-e_4,\quad [e_2,e_4]=e_3,\quad
[e_2,e_5]=e_6,\quad [e_2,e_6]=-e_5,\\
&[e_3,e_5]=e_7,\quad [e_3,e_6]=e_8,\quad [e_4,e_5]=e_8,\quad [e_4,e_6]=-e_7.
\end{align*}
It follows immediately that $\mathfrak{g}^1=[\mathfrak{g},\mathfrak{g}]=\langle e_3,e_4,e_5,e_6,e_7,e_8\rangle$ and $\mathfrak{g}^{2}=[\mathfrak{g}^1,\mathfrak{g}^1]=\langle e^7,e^8\rangle$ and $\mathfrak{g}^3=0$. Hence, $G$ is a $3$-step solvable non nilpotent complex Lie group.

We point out that $G$ has a structure of semidirect product, i.e., $G$ can be presented  as
\[
G\cong\C\ltimes_{\phi} N,
\]
where the factor $\C$ is simply-connected abelian (hence, nilpotent) and corresponds to the subalgebra $\mathfrak{c}=\langle Z_1\rangle\subset \mathfrak{g}_+^*$, whereas $N=\{(0,z_2,z_3,z_4)\}\leq G$ is the nilradical of $G$ and corresponds to the ideal $\mathfrak{n}=\langle Z_2,Z_3,Z_4\rangle\subset \mathfrak{g}_+^*$. Note that, via the isomorphism \eqref{eq:map_F_quad},  $N$ is isomorphic to the $3$-dimensional complex Heisenberg group
i.e., $N\cong \mathbb{H}(3;\C)$.

The semidirect product map $\phi\,\colon \C\rightarrow \text{Aut}(N)$ is defined, according to \eqref{eq:multiplication_quad}, by
\[
\phi(y_1)\, ^t(z_2,z_3,z_4)=\, ^t(e^{-y_1}y_2,e^{y_1}z_3,z_4),
\]
for every $z_1\in\C$, $^t(z_2,z_3,z_4)\in N$, where we have identified $N$ with $(\C^3,\star)$ and $$^t(y_2,y_3,y_4)\star\,^t(z_2,z_3,z_4):=\,^t(z_2+y_2,z_3+y_3, z_4+y_4+\frac{1}{2}y_2z_3-\frac{1}{2}z_2y_3)$$ for every $^t(y_2,y_3,y_4),\,^t(z_2,z_3,z_4)\in\C^3$.

We now construct a lattice for the Lie group $G$. Let us fix $A\in SL(2;\Z)$ and $e^{-\lambda},e^{\lambda}\in \R$ its real eigenvalues, with $\lambda>0$. In particular, there exists a matrix $P\in GL(2;\R)$ such that
\begin{equation}\label{eq:comm_int_matrix_quad}
P\cdot A \cdot P^{-1}=\begin{pmatrix}
e^{-\lambda} & 0\\
0 & e^{\lambda}
\end{pmatrix}=:\Lambda.
\end{equation}
Set $|P|:=\det(P)$. Along the lines of \cite[Example 3.4]{Yam}, we define the discrete subset $\Gamma$ of $\C^4$ as
\begin{equation*}
\Gamma_{\mu}:=\left\{\begin{pmatrix}\lambda a+i\mu b\\P(m_1+im_2)\\\frac{1}{2}|P|(h+ik)\end{pmatrix}: a,b,h,k\in \Z, m_1,m_2\in \Z^2\right\}, \qquad \mu\in \R.
\end{equation*}
\begin{lemma}\label{lemma:lattices_quad}
For $\mu=\pi$ and $\mu=\frac{\pi}{2}$, 
$\Gamma_{\mu}$ is a lattice of $G$.
\end{lemma}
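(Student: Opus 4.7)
My plan is to verify the three defining properties of a lattice: (i) $\Gamma_\mu$ is closed under the operation $\ast$ and under inversion, (ii) it is discrete in $G$, and (iii) the quotient $G/\Gamma_\mu$ is compact.

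For (i), the identity $(0,0,0,0)$ clearly belongs to $\Gamma_\mu$ (take $a=b=h=k=0$, $m_1=m_2=0$). For closure under $\ast$ I compute the product of two elements using \eqref{eq:multiplication_quad} coordinate by coordinate. The first coordinate $y_1+z_1=\lambda(a_1+a_2)+i\mu(b_1+b_2)$ stays in $\lambda\Z+i\mu\Z$. For the pair $(z_2,z_3)$, setting $y_1=\lambda a_1+i\mu b_1$ I factor
\begin{equation*}
\Lambda_{y_1} := \mathrm{diag}(e^{-y_1},e^{y_1}) = \mathrm{diag}(e^{-i\mu b_1},e^{i\mu b_1})\cdot\Lambda^{a_1}.
\end{equation*}
By \eqref{eq:comm_int_matrix_quad}, $\Lambda^{a_1}P=PA^{a_1}$ with $A^{a_1}\in SL(2;\Z)$, so $\Lambda^{a_1}$ sends $P(\Z^2+i\Z^2)$ into itself. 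For $\mu=\pi$ the remaining factor collapses to the global scalar $(-1)^{b_1}$ and preservation is immediate; for $\mu=\pi/2$ the factor $\mathrm{diag}((-i)^{b_1},i^{b_1})$ is non-scalar for odd $b_1$, so I verify preservation of $P(\Z^2+i\Z^2)$ by a case-by-case check on $b_1\bmod 4$. For the fourth coordinate the nontrivial cross term is
\begin{equation*}
\tfrac{1}{2}\bigl(e^{y_1}y_2z_3 - e^{-y_1}y_3z_2\bigr) = \tfrac{1}{2}\,(y_2,y_3)\,J\,\Lambda_{y_1}\,(z_2,z_3)^{t}, \qquad J=\bigl(\begin{smallmatrix}0&1\\-1&0\end{smallmatrix}\bigr).
\end{equation*}
Using the identities $P^{t}JP=|P|\,J$ (valid for any $2\times 2$ matrix $P$) and $(A^{a_1})^{t}JA^{a_1}=J$ (which holds since $\det A=1$), this cross term reduces to $\tfrac{|P|}{2}$ times an integer-valued bilinear pairing in the Gaussian vectors $m_1+im_2$ and $m_1'+im_2'$, hence lies in $\tfrac{|P|}{2}(\Z+i\Z)$. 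Closure under inversion is handled analogously, by checking that the inverse of $(z_1,z_2,z_3,z_4)$ still fits the form prescribed by $\Gamma_\mu$.

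For (ii), $\Gamma_\mu$ is the image of $\Z^8$ under an injective $\R$-linear embedding into $\R^8\cong\C^4$ (since $\lambda,\mu>0$ and $P$ is invertible), hence it is discrete. For (iii), I use the Lie-group homomorphism $\pi\colon G\to(\C,+)$, $(z_1,z_2,z_3,z_4)\mapsto z_1$, whose kernel is the nilradical $N\cong\mathbb{H}(3;\C)$. Under $\pi$, the image $\pi(\Gamma_\mu)=\lambda\Z+i\mu\Z$ is a cocompact lattice of $\C$, and the kernel $\Gamma_\mu\cap N=\{(0,P(m_1+im_2),\tfrac{|P|}{2}(h+ik))\}$ is a lattice of $N$ (closure under Heisenberg multiplication again uses $P^{t}JP=|P|J$). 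The quotient $N/(\Gamma_\mu\cap N)$ is a compact Iwasawa-type manifold; hence $G/\Gamma_\mu$ is a fiber bundle with compact fiber over the compact base torus $\C/(\lambda\Z+i\mu\Z)$, and is therefore compact.

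The main obstacle is the $\mu=\pi/2$ subcase of the subgroup verification: the factor $\mathrm{diag}((-i)^{b_1},i^{b_1})$ is non-scalar for odd $b_1$, so its preservation of $P(\Z^2+i\Z^2)$ is not automatic and reflects a compatibility between the real eigenvector frame $P$ of $A$ and the order-four rotation of $\C$, which must be checked directly rather than by a one-line invariance argument.
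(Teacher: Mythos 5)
For $\mu=\pi$ your argument coincides with the paper's: the same subgroup computation via $\Lambda^{a}P=PA^{a}$ and $\,^{t}PJP=|P|J$, plus the discreteness and cocompactness checks that the paper dismisses as straightforward (your fibration over $\C/(\lambda\Z+i\pi\Z)$ with a Heisenberg-lattice fiber is the right way to make that precise). That half is correct and complete.

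The gap is exactly where you located it, in the $\mu=\tfrac{\pi}{2}$ subgroup verification, and your proposed fix does not close it. For odd $b_1$ the factor $\mathrm{diag}((-i)^{b_1},i^{b_1})$ equals $\pm i^{\mp 1}\,\mathrm{diag}(1,-1)$, and since multiplication by a power of $i$ preserves $\Z[i]^2$, preservation of $P(\Z^2+i\Z^2)$ is equivalent to $S:=P^{-1}\mathrm{diag}(1,-1)P$ mapping $\Z^2$ into itself. But $S$ is the unique matrix commuting with $A$ that acts as $+1$ on the $e^{-\lambda}$-eigenline and as $-1$ on the $e^{\lambda}$-eigenline, so $S=\alpha A+\beta I$ with $\alpha=-2/(e^{\lambda}-e^{-\lambda})=-2/\sqrt{(\tr A)^{2}-4}$, which is irrational for every $A\in SL(2;\Z)$ with $\lambda>0$ (one would need $(\tr A)^{2}-4$ to be a nonzero perfect square, which is impossible). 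Hence $S$ does not even have rational entries, no congruence check on $b_1\bmod 4$ can succeed, and $\Gamma_{\pi/2}$ as written, with the same real matrix $P$, is not closed under $\ast$: already the product $(i\tfrac{\pi}{2},0,0,0)\ast(0,Pe_1,0,0)$ leaves the set. The paper is of no help here, since it only writes out the $\mu=\pi$ computation and declares the other case ``similar''; but ``similar'' is not available. A correct $\mu=\tfrac{\pi}{2}$ construction has to replace $P(\Z^2+i\Z^2)$ by $Q\Z^4$ for a real $4\times 4$ matrix $Q$ conjugating the real form of $\mathrm{diag}(-ie^{-\lambda},ie^{\lambda})$ into $GL(4;\Z)$ — possible because its characteristic polynomial $t^4+(e^{2\lambda}+e^{-2\lambda})t^2+1$ has integer coefficients — in the spirit of the construction in \cite{Yam}. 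You should either carry out that modification explicitly or restrict the claim to $\mu=\pi$.
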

\begin{proof}
First of all we prove that $\Gamma_{\pi}$ is a subgroup of $G$. 

Clearly the identity $e=\,^t(0,0,0,0)$ of $G$ belongs to $\Gamma$ by choosing $a=b=h=k=0$ and $m_1=m_2=\begin{pmatrix}
0\\0
\end{pmatrix}.$\\
Taking $\gamma,\gamma'\in\Gamma$, then by using the formula \eqref{eq:multiplication_quad} for $\ast$, we obtain
\begin{align*}
\gamma\ast \gamma'&=\begin{pmatrix}\lambda a+i2\pi b\\P(m_1+im_2)\\\frac{1}{2}|P|(h+ik)\end{pmatrix}\ast \begin{pmatrix}\lambda a'+i2\pi b'\\P(m_1'+im_2')\\\frac{1}{2}|P|(h'+ik')\end{pmatrix}\\
&=\begin{pmatrix}\lambda(a+a')+i2\pi (b+b')\\\begin{pmatrix}e^{-\lambda a -2\pi i b} & 0\\
0 & e^{\lambda a+i2\pi b}\end{pmatrix}
P(m_1'+im_2')+P(m_1+im_2)\\
\frac{1}{2}|P|(h+ik)+\frac{1}{2}\,^t(P(m_1+im_2))\begin{pmatrix}
0 & 1 \\ -1 & 0
\end{pmatrix}
\begin{pmatrix}
e^{-\lambda a -2\pi i b} & 0\\
0 & e^{\lambda a+i2\pi b}
\end{pmatrix}P(m_1'+im_2')+ \frac{1}{2}|P|(h'+ik')
\end{pmatrix}.
\end{align*}
Since $e^{i2\pi b}=1$ for every $b\in\Z$, and by \eqref{eq:comm_int_matrix_quad},
\[
\begin{pmatrix}
e^{-\lambda a-i2\pi b} & 0\\
0 & e^{\lambda a+i2\pi b}
\end{pmatrix}\cdot P=\begin{pmatrix}
(e^{-\lambda})^a & 0\\
0 & (e^{\lambda})^{a}
\end{pmatrix}\cdot P=\Lambda^a\cdot P=P\cdot A^a,
\]
then
\[
\begin{pmatrix}
e^{-\lambda a-i2\pi b} & 0\\
0 & e^{\lambda a+i2\pi b}
\end{pmatrix}P(m_1+im_2)+P(m_1'+im_2')=PA^a(m_1+im_2)+P(m_1'+im_2')\in P(\Z+i\Z)
\]
and
\begin{align*}
\frac{1}{2}\, ^t(P(m_1+im_2))\begin{pmatrix}
0 & 1 \\ -1 & 0
\end{pmatrix} 
\Lambda^a P(m_1'+im_2')&=\frac{1}{2}\, ^t(m_1+im_2) \,^tP\begin{pmatrix}
0 & 1 \\ -1 & 0
\end{pmatrix} P A^a(m_1'+im_2')\\
& =\frac{1}{2}\, ^t(m_1+im_2)\begin{pmatrix}
0 & |P|\\
-|P| & 0
\end{pmatrix}A^a(m_1'+im_2')\in \frac{1}{2}|P|(\Z+i\Z).
\end{align*}
Hence, $\gamma\ast \gamma'\in \Gamma$.\\
Finally, for any given $z=\,^t(z_1,z_2,z_3,z_4)\in G$, the inverse with respect to $\ast$ is
$$
z^{-1}=\,^t(-z_1,-\,^t\left(\begin{pmatrix}e^{z_1} & 0\\
0 & e^{-z_1}
\end{pmatrix}
\begin{pmatrix}z_2\\z_3
\end{pmatrix}\right),-z_4).
$$
Then, if $\gamma=\begin{pmatrix} \lambda a+i2\pi b\\
P(m_1+im_2)\\
\frac{1}{2}|P|(h+ik)
\end{pmatrix}\in\Gamma$, its inverse with respect to $\ast$ is
\[
\gamma^{-1}=\begin{pmatrix} -\lambda a-i2\pi b\\
-\begin{pmatrix}
e^{\lambda a +i2\pi b} & 0\\
0 & e^{-\lambda a -i2\pi b}
\end{pmatrix}P(m_1+im_2)\\
-\frac{1}{2}|P|(h+ik)
\end{pmatrix}=\begin{pmatrix} -\lambda a-i2\pi b\\
-\Lambda^{-a}P(m_1+im_2)\\
-\frac{1}{2}|P|(h+ik)
\end{pmatrix}\begin{pmatrix} -\lambda a-i2\pi b\\
-PA^{-a}(m_1+im_2)\\
-\frac{1}{2}|P|(h+ik)
\end{pmatrix}\in\Gamma.
\]
Therefore $\Gamma_{\pi}$ is a discrete subgroup of $G$ and it is straightforward to prove that $M_{\pi}:=\Gamma_{\pi}\backslash G$ is a compact manifold, that is, $\Gamma_{\pi}$ is a lattice of $G$.

The proof that $\Gamma_{\frac{\pi}{2}}$ is a lattice of $G$ is similar and it is omitted.
\end{proof}
As a consequence, $M_{\mu}:=\Gamma_\mu\backslash G$ is a compact complex parallelizable manifold, for $\mu=\pi$ and $\mu=\frac{\pi}{2}$.
\begin{rem}
The map $\pi\colon G\rightarrow \C$ defined that $\pi(z_1, \dots, z_4)=z_1$ induces a holomorphic map $$\pi\colon M_{\mu}\rightarrow (\lambda\Z+i\mu \Z)\backslash\C$$ with fiber biholomorphic to the Iwasawa manifold $\left(H(3;\C)\cap GL(3;\Z[i])\right)\backslash H(3;\C)$.
\end{rem}

We now make us of the results in \cite{AngKas} to construct the subcomplex $C_{\Gamma_\mu}^{\bullet.\bullet}$ of $\mathcal{A}^{\bullet,\bullet}(M_{\mu})$ which computes the Bott-Chern cohomology of $M_{\mu}$. We recall that for every $g\in G$, the adjoint respresentation of $G$
\[
\text{Ad}\colon G\rightarrow \text{Aut}(\mathfrak{g})
\]
 is defined as $\text{Ad}_g:=d(L_{g}\circ R_{g^{-1}})_e$, where $e=(0,0,0,0)$ is the identity element of $G$. By restricting $\text{Ad}$ to $\C$, for every $y_1\in \C$ we have that $(L_{y_1}\circ R_{y_1^{-1}})(z_1,z_2,z_3,z_4)=(z_1,e^{-y_1}z_2,e^{y_1}z_3,z_4)$, so that $$\text{Ad}_{y_1}=d(\text{Id}_{\C},\phi(y_1)).$$ Hence, for any $y_1\in \C$, we obtain the following expression of the semisimple part of $\text{Ad}_{y_1}$ with respect to the frame $\{Z_1,Z_2,Z_3,Z_4\}$
\[
(\text{Ad}_{y_1})_s=\text{diag}(1,e^{-y_1},e^{y_1},1),
\]
where $\alpha_1\equiv \alpha_4\equiv 1$, $\alpha_3(y_1)=e^{-y_1}$, and $\alpha_4(y_1)=e^{y_1}$ are characters of $\C$. Let us denote by the same symbols $\alpha_1,\alpha_2,\alpha_3,\alpha_4$ their trivial extentions to characters of $G$.

Let us then define, as in \cite[Theorem 2.20]{AngKas}, the subcomplex of $\mathcal{A}^{0,\bullet}(M_\mu)$ given by 
\[
B_{\Gamma_{\mu}}^{\bullet}:=\left\langle\left(\frac{\overline{\alpha}_I}{\alpha_I}\right)\varphi^{\overline{I}}: I=\{i_1,\dots, i_k\}\subset \{1,\dots 4\}\,\, \text{such that}\,\, \left(\frac{\overline{\alpha}_I}{\alpha_I}\right)\restrict{\Gamma_{\mu}}=1 \right\rangle,
\]
where $\alpha_I:=\alpha_{i_1}\cdots \alpha_{i_k}$.

\subsubsection{Case: $\mu=\pi$.}
Note that $\frac{\overline{\alpha}_2}{\alpha_2}\restrict{\Gamma_\pi}=e^{z_1-\overline{z_1}}\restrict{\Gamma_\pi}=e^{2\pi ib}=1$ and, analogously, $\frac{\overline{\alpha}_2}{\alpha_2}\restrict{\Gamma_\pi}=e^{\overline{z}_1-z_1}\restrict{\Gamma_\pi}=e^{-2\pi ib}=1$.  Hence, we can describe explicity $B_{\Gamma_\pi}^{\bullet}$ as
\begin{align*}
&B_{\Gamma_\pi}^0=\langle1\rangle\\
&B_{\Gamma_\pi}^{1}=\langle\varphi^{\overline{1}},e^{z_1-\overline{z}_1}\varphi^{\overline{2}},e^{\overline{z}_1-z_1}\varphi^{\overline{3}},\varphi^{\overline{4}}\rangle\\
&B_{\Gamma_\pi}^{2}=\langle e^{z_1-\overline{z}_1}\varphi^{\overline{12}},e^{\overline{z}_1-z_1}\varphi^{\overline{13}},\varphi^{\overline{14}},\varphi^{\overline{23}},e^{z_1-\overline{z}_1}\varphi^{\overline{24}},e^{\overline{z}_1-z_1}\varphi^{\overline{34}}\rangle\\
&B_{\Gamma_\pi}^{3}=\langle\varphi^{\overline{123}},e^{z_1-\overline{z}_1}\varphi^{\overline{124}},e^{\overline{z}_1-z_1}\varphi^{\overline{134}},\varphi^{\overline{234}}\rangle\\
&B_{\Gamma_\pi}^{4}=\langle\varphi^{1234}\rangle.
\end{align*}
\subsubsection{Case: $\mu=\frac{\pi}{2}$.}
In this case, $\frac{\overline{\alpha}_2}{\alpha_2}\restrict{\Gamma_{\frac{\pi}{2}}}=e^{z_1-\overline{z_1}}\restrict{\Gamma_{\frac{\pi}{2}}}=e^{i\pi b}\not\equiv 1$ and, analogously, $\frac{\overline{\alpha}_2}{\alpha_2}\restrict{\Gamma_\pi}=e^{-\pi ib}\not\equiv 1$.  In particular, unlike the previous case, the function $e^{z_1-\overline{z}_1}$ and its inverse are not defined on $M_{\frac{\pi}{2}}$. Hence, we can describe explicity $B_{\Gamma_{\frac{\pi}{2}}}^{\bullet}$ as
\begin{align*}
&B_{\Gamma_{\frac{\pi}{2}}}^0=\langle1\rangle\\
&B_{\Gamma_{\frac{\pi}{2}}}^{1}=\langle\varphi^{\overline{1}},\varphi^{\overline{4}}\rangle\\
&B_{\Gamma_{\frac{\pi}{2}}}^{2}=\langle \varphi^{\overline{14}},\varphi^{\overline{23}}\rangle\\
&B_{\Gamma_{\frac{\pi}{2}}}^{3}=\langle\varphi^{\overline{123}},\varphi^{\overline{234}}\rangle\\
&B_{\Gamma_{\frac{\pi}{2}}}^{4}=\langle\varphi^{\overline{1234}}\rangle.
\end{align*}

\noindent Furthermore, let us define the subcomplex of $\mathcal{A}^{\bullet,\bullet}(M_\mu)$ given by
\[
C_{\Gamma_{\mu}}^{\bullet_1,\bullet_2}:=\textstyle\bigwedge^{\bullet_1}\mathfrak{g}_+^*\otimes_\C B_{\Gamma_{\mu}}^{\bullet_2}+\overline{B}_{\Gamma_{\mu}}^{\bullet_1}\otimes_\C\bigwedge^{\bullet_2}\mathfrak{g}_-^*.
\]
From \cite[Theorem 2.22]{AngKas}, the inclusion
\[
C_{\Gamma_{\mu}}^{\bullet,\bullet}\hookrightarrow \mathcal{A}^{\bullet,\bullet}(M_\mu)
\]
induces the isomorphism
\[
H(C_{\Gamma_{\mu}}^{\bullet-1,\bullet-1}\xrightarrow{\del\delbar}C_{\Gamma_{\mu}}^{\bullet,\bullet}\xrightarrow{\del+\delbar}C_{\Gamma_{\mu}}^{\bullet+1,\bullet}\oplus C_{\Gamma_{\mu}}^{\bullet,\bullet+1})\xrightarrow{\cong} H_{BC}^{\bullet,\bullet}(M_\mu).
\]
As a result, we are able to describe the Bott-Chern cohomology of $M_{\mu}$ for $\mu=\pi$ and $\mu=\frac{\pi}{2}$, see Table \ref{table:1} and Table \ref{table:2}.

\begin{theorem}
The solvmanifold $M_{\mu}$ admits a non vanishing quadruple $ABC$-Massey product for $\mu=\pi$ and $\mu=\frac{\pi}{2}$.
\end{theorem}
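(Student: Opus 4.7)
The plan is to exhibit, for each of the lattices $\Gamma_\pi$ and $\Gamma_{\pi/2}$, four explicit Bott-Chern classes $[\alpha_1],\dots,[\alpha_4]$ on $M_\mu$ and verify, via the constructive definition~\eqref{eq:ABC_quad}, that the associated quadruple $ABC$-Massey product represents a nonzero class in the Schweitzer quotient~\eqref{eq:H_S_-1_pq_quad}. Throughout I would work inside the subcomplex $C_{\Gamma_\mu}^{\bullet,\bullet}\hookrightarrow\mathcal{A}^{\bullet,\bullet}(M_\mu)$ introduced above, which by \cite[Theorem 2.22]{AngKas} computes $H_{BC}^{\bullet,\bullet}(M_\mu)$ and $H_A^{\bullet,\bullet}(M_\mu)$, so that all the primitives and auxiliary forms required in the construction of the quadruple product can be chosen inside it.

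The first step is to select four $d$-closed representatives $\alpha_1,\dots,\alpha_4\in C_{\Gamma_\mu}^{\bullet,\bullet}$ and to produce primitives $x,y,z\in C_{\Gamma_\mu}^{\bullet,\bullet}$ with $\del\delbar x=\alpha_1\wedge\alpha_2$, $\del\delbar y=\alpha_2\wedge\alpha_3$, $\del\delbar z=\alpha_3\wedge\alpha_4$. The key technical observation is that many products of invariant classes fail to be $\del\delbar$-exact inside $\bigwedge^{\bullet,\bullet}\mathfrak{g}_\C^*$ (a direct consequence of \eqref{eq:struct_eq_quad}), but become $\del\delbar$-exact once one allows the twisted generators $e^{\pm(z_1-\bar z_1)}\varphi^{\bar I}$ appearing in $B^\bullet_{\Gamma_\pi}$. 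A representative identity is
\[
\del\delbar\bigl(e^{z_1-\bar z_1}\varphi^{\bar 2\bar 3}\bigr)=-e^{z_1-\bar z_1}\varphi^{1\bar 1\bar 2\bar 3},
\]
which follows from $\del(e^{z_1-\bar z_1})=e^{z_1-\bar z_1}\varphi^1$, $\delbar(e^{z_1-\bar z_1})=-e^{z_1-\bar z_1}\varphi^{\bar 1}$, and $d\varphi^{\bar 2\bar 3}=0$. Relations of this kind, combined with \eqref{eq:struct_eq_quad}, supply the primitives for $\mu=\pi$. In the $\mu=\pi/2$ case the exponential generators drop out of $B^\bullet_{\Gamma_{\pi/2}}$, so the quadruple has to be chosen so that $x,y,z$ already lie in the invariant subalgebra, which is still possible because the Heisenberg-type relations $\del\varphi^2=\varphi^{12}$, $\del\varphi^3=-\varphi^{13}$, $\del\varphi^4=-\varphi^{23}$ (and their conjugates) produce enough $\del\delbar$-exact products.

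The second step is to verify the secondary conditions $x\wedge\alpha_3-\alpha_1\wedge y=\del\eta+\delbar\eta'$ and $y\wedge\alpha_4-\alpha_2\wedge z=\del\xi+\delbar\xi'$ by direct computation inside $C_{\Gamma_\mu}^{\bullet,\bullet}$, producing $\eta,\eta',\xi,\xi'$; substituting these into \eqref{eq:ABC_quad} yields an explicit representative. The main obstacle is the final step, proving that this representative is non-trivial in \eqref{eq:H_S_-1_pq_quad}. The strategy I would follow mirrors the one used in the triple case throughout Section~\ref{sec:3} and Section~\ref{sec:4}: fix the invariant Hermitian metric $g$ on $M_\mu$ associated to the unitary coframe $\{\varphi^1,\dots,\varphi^4\}$, pair the representative in the $L^2$-sense with a carefully chosen $d$-closed test form of complementary bidegree, use Stokes' theorem to annihilate the contributions coming from the $\mathrm{pr}\circ d$-exact terms, and so reduce non-triviality to a finite linear-algebraic check against the generators of $C_{\Gamma_\mu}^{\bullet,\bullet}$. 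The cases $\mu=\pi$ and $\mu=\pi/2$ must be handled separately, because Table~\ref{table:1} and Table~\ref{table:2} record different Bott-Chern and Aeppli cohomologies, and hence different ideals of indeterminacy; the quadruple of classes has to be picked compatibly with each table in order for the non-triviality check to succeed.
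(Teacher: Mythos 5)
Your proposal is a strategy outline rather than a proof: the theorem is an existence statement whose entire content is the exhibition of a concrete quadruple of Bott--Chern classes together with the verification that the resulting class in $H_{S_{2,6}}^{-1}$ of \eqref{eq:H_S_-1_pq_quad} is nonzero, and you never name the four classes, the primitives $x,y,z$, or the secondary data $\eta,\eta',\xi,\xi'$. The ``finite linear-algebraic check'' you defer to is precisely the missing part. Moreover, two of your guiding heuristics point away from the actual solution. First, you insist that the quadruple must be chosen differently for $\mu=\pi$ and $\mu=\tfrac{\pi}{2}$ because the subcomplexes $B^{\bullet}_{\Gamma_\mu}$ differ; in fact one can take $\alpha=\varphi^{12}$, $\beta=\varphi^{\overline{23}}$, $\gamma=\varphi^{\overline{13}}$, $\delta=\varphi^{\overline{12}}$, all left-invariant, and the whole computation is then lattice-independent: $\alpha\wedge\beta=\del\delbar\varphi^{2\overline{4}}$ has an \emph{invariant} $\del\delbar$-primitive, while $\beta\wedge\gamma=0$ and $\gamma\wedge\delta=0$, so one may take $y=z=\eta=\xi=\xi'=0$ and $\eta'=\varphi^{2\overline{34}}$, and the representative \eqref{eq:ABC_quad} collapses to $\varphi^{2\overline{1234}}$. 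Second, your ``key technical observation'' that products of invariant classes only become $\del\delbar$-exact after admitting the twisted generators $e^{\pm(z_1-\overline{z}_1)}\varphi^{\overline{I}}$ is not borne out: your sample identity $\del\delbar\bigl(e^{z_1-\overline{z}_1}\varphi^{\overline{23}}\bigr)=-e^{z_1-\overline{z}_1}\varphi^{1\overline{123}}$ is correct, but its right-hand side is not a product of invariant Bott--Chern representatives, so it does not supply a primitive for any Massey-type product, and for a well-chosen quadruple no twisted primitive is needed at all.

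For the non-triviality step your $L^2$-pairing plan is workable but can be replaced by something more direct once the representative is in hand: with the diagonal invariant metric, $\ast\varphi^{2\overline{1234}}=\varphi^{134}$ is $d$-closed, so the representative is $\Delta_A$-harmonic and hence defines a nonzero class in $H_A^{1,4}(M_\mu)$, which for bidegree $(2,6)$ coincides with $H_{S_{2,6}}^{-1}$ since $\mathcal{A}^{0,5}(M_\mu)=0$; one then only needs to exclude a $\del$-primitive coming from $S_{2,6}^{-2}=\mathcal{A}^{0,4}(M_\mu)$, which again follows from harmonicity. Without an explicit quadruple, however, none of this can be carried out, so the proposal as it stands does not establish the theorem.
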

\begin{proof}
We will exhibit explictly the non vanishing quadruple $ABC$-Massey product on $M_{\pi}$ and $M_{\frac{\pi}{2}}$. The construction will not depend on the choice of the lattice $\Gamma_{\pi}$ or $\Gamma_{\frac{\pi}{2}}$, so that we will use the generic notation $M_\mu$. Let us consider the following invariant forms on $M_\mu$
\[
\alpha=\varphi^{12}, \quad \beta=\varphi^{\overline{23}}, \quad  \gamma=\varphi^{\overline{13}}, \quad \delta=\varphi^{\overline{12}}.
\]
By structure equations \eqref{eq:struct_eq_quad}, they are all $d$-closed forms and with respect to any Hermitian metric $g$ on $\Gamma\backslash G$, they are all $\del\delbar\ast_g$-closed by bidegree reasons. Hence, the forms $\alpha, \beta,\gamma$, and $\delta$ are all Bott-Chern harmonic and define non vanishing Bott-Chern cohomology classes
$$
[\alpha]\in H_{BC}^{2,0}(M_\mu),\quad [\beta],[\gamma],[\delta]\in H_{BC}^{0,2}(M_\mu).
$$
From now on, we will fix $g$ to be the invariant diagonal Hermitian metric on $(M_\mu,J)$ with fundamental form $\omega=\frac{i}{2}\sum_{j=1}^4\varphi^{j\overline{j}}$.

Let us then consider the quadruple $ABC$-Massey product on $M_{\mu}$
\[
\mathcal{P}:=\Bigl\langle[\varphi^{12}],[\varphi^{\overline{23}}],[\varphi^{\overline{13}}],[\varphi^{\overline{12}}]\Bigr\rangle_{ABC}.
\]
We claim that $\mathcal{P}$ is a well defined non vanishing quadruple $ABC$-Massey product.

In fact, since
\[
\alpha\wedge\beta=\del\delbar\varphi^{2\overline{4}}, \quad \beta\wedge\gamma=0, \quad \gamma\wedge\delta=0,
\]
we can set $x:=\varphi^{2\overline{4}}$, and $y:=0$, $z:=0$.
Note that
$$
x\wedge\gamma-\alpha\wedge y=x\wedge \gamma=\varphi^{2\overline{134}}, \quad y\delta-\beta z=0,
$$
hence, we can fix
\[
\eta=0, \quad \eta'=\varphi^{2\overline{34}}, \quad \xi=\xi'=0.
\]
Thus, by the formula \eqref{eq:ABC_quad}, $\mathcal{P}$ is we defined and represented by the 
\[
\varphi^{2\overline{1234}}.
\]
Moreover, with respect to $g$, the form $\ast\varphi^{2\overline{1234}}=\varphi^{134}$ is $d$-closed, i.e., the form $\varphi^{2\overline{1234}}$ is $\Delta_A$-harmonic and its Aeppli cohomology class $[\varphi^{2\overline{1234}}]_A$ is non vanishing. Furthermore, the space $H_{S^{-1}_{p,q}}$ defined by \eqref{eq:H_S_-1_pq_quad} restricts in our case as
\[
H_{S_{2,6}}^{-1}(\mathcal{A}^{\bullet
,\bullet}(M_{\mu}))=\frac{\ker(\text{pr}\circ d\colon \mathcal{A}^{1,4}(M_\mu)\rightarrow \{ 0\})}{\im(\text{pr}\circ d\colon \mathcal{A}^{0,4}(M_\mu)\oplus \mathcal{A}^{1,3}(M_\mu)\rightarrow \mathcal{A}^{1,4}(M_\mu))},
\]
and hence coincides with the Aeppli cohomology space $$H_A^{1,4}(M_\mu)=\frac{\ker(\del\delbar\mathcal{A}^{1,4}(M_\mu)\rightarrow \{0\})}{\im\del\restrict{\mathcal{A}^{0,4}(M_\mu)}
+\im\delbar\restrict{\mathcal{A}^{1,3}(M_\mu)}}.$$ As a consequence, the representative $\varphi^{1\overline{1234}}$ defines a non vanishing element in $H_{S_{2,6}}^{-1}(\mathcal{A}^{\bullet,\bullet}(M_\mu))$.
Furthermore, since $\varphi^{1\overline{1234}}\in\mathcal{H}_{\Delta_A}^{1,4}(M_\mu)$, no differential from $S_{2,6}^{-2}:=\mathcal{A}^{0,4}(M_\mu)$ can be a $\del$-primitive of $\varphi^{1\overline{1234}}$, yielding that $[\varphi^{1\overline{1234}}]$ is not trivial also as a equivalence class of $H_{S_{2,6}}^{-1}(\mathcal{A}^{\bullet,\bullet}(M_\mu))$. As a result, the product $\mathcal{P}$ is a well defined non vanishing quadruple $ABC$-Massey product on $M_\mu$. 
\end{proof}

\begin{align}\label{table:2}
\begin{array}{ll}
& \qquad \qquad \qquad \qquad \qquad \qquad \qquad \text{Table \ref{table:2}}\\
\toprule
(p,q) & H_{BC}^{p,q}(M_{\frac{\pi}{2}}) \\
\bottomrule
(0,0) & \C\,\langle\,1\,\rangle \\
\midrule
(1,0) &  \C\,\langle\, dz_1\, \rangle \\
(0,1) & \C\,\langle\, dz_{\overline{1}}\,\rangle \\
\midrule
(2,0) & \C\,\langle\, e^{z_1}dz_{12}, \,e^{-z_1}dz_{13},\,dz_{23}\,\rangle\\
(1,1) & \C\,\langle\, dz_{1\overline{1}}\,\rangle\\
(0,2) & \C\,\langle\, e^{\overline{z}_1}dz_{\overline{12}}, \,e^{-\overline{z}_1}dz_{\overline{13}},\,dz_{\overline{23}}\,\rangle\\
\midrule
(3,0) & \C\,\langle\, dz_{123},\, e^{z_1}dz_{12}\wedge \varphi^4, e^{-z_1}dz_{13}\wedge\varphi^4, dz_{23}\wedge\varphi^4\,\rangle\\
(2,1) & \C\,\langle\, e^{z_1}dz_{12\overline{1}},\, e^{-z_1}dz_{13\overline{1}},\, dz_{23\overline{1}}\, \rangle\\
(1,2) & \C\,\langle\,e^{\overline{z}_1}dz_{1\overline{12}},\, e^{-\overline{z}_1}dz_{1\overline{13}},\, dz_{1\overline{23}},\,\rangle\\
(0,3) & \C\,\langle\, dz_{\overline{123}},\, e^{\overline{z}_1}dz_{\overline{12}}\wedge\varphi^{\overline{4}}, e^{-\overline{z}_1}dz_{\overline{13}}\wedge\varphi^{\overline{4}}, \, dz_{\overline{23}}\wedge\varphi^{\overline{4}}\,\rangle\\
\midrule
(4,0) & \C\,\langle\, dz_{123}\wedge\varphi^4\,\rangle\\
(3,1) & \C\,\langle dz_{123\overline{1}},\,  e^{z_1}dz_{12}\wedge\varphi^4\wedge dz_{\overline{1}}, \, e^{-z_1}dz_{13}\wedge\varphi^4\wedge dz_{\overline{1}}, \, dz_{23}\wedge\varphi^4\wedge dz_{\overline{1}}\,\rangle\\
(2,2) & \,0\\
(1,3) & \C\,\langle\, dz_{1\overline{123}},\, e^{\overline{z}_1}dz_{1\overline{12}}\wedge\varphi^{\overline{4}},\, e^{-\overline{z}_1}dz_{1\overline{13}}\wedge\varphi^{\overline{4}},\,dz_{1\overline{23}}\wedge\varphi^{\overline{4}}\,\rangle\\
(0,4) & \C\,\langle\,\, dz_{\overline{123}}\wedge\varphi^{\overline{4}}\,\rangle\\
\midrule
(4,1) & \C\,\langle\, dz_{123}\wedge\varphi^4\wedge dz_{\overline{1}}\,\rangle\\
(3,2) & \C\,\langle\, e^{\overline{z}_1}dz_{23}\wedge\varphi^4\wedge dz_{\overline{12}}, \, e^{-\overline{z}_1}dz_{23}\wedge\varphi^4\wedge dz_{\overline{13}}, \, dz_{23}\wedge\varphi^4\wedge dz_{\overline{23}} \,\rangle\\
(2,3) & \C\,\langle\, e^{z_1}dz_{12\overline{23}}\wedge\varphi^{\overline{4}}, \, e^{-z_1}dz_{13\overline{23}}\wedge\varphi^{\overline{4}}, \, dz_{23}\wedge\varphi^4\wedge dz_{\overline{23}} \,\rangle\\
(1,4) & \C\,\langle\, dz_{1\overline{123}}\wedge\varphi^{\overline{4}}\,\rangle\\
\midrule
(4,2) & \C\,\langle\, e^{\overline{z}_1}dz_{123}\wedge\varphi^4\wedge dz_{\overline{12}}, \, e^{-\overline{z}_1}dz_{123}\wedge\varphi^4\wedge dz_{\overline{13}}, \, dz_{123\wedge}\varphi^4\wedge dz_{\overline{123}}\,\rangle \\
(3,3) & \C\,\langle\, dz_{123\overline{23}}\wedge\varphi^{\overline{4}}, \, e^{z_1}dz_{12}\wedge\varphi^4\wedge dz_{\overline{23}}\wedge\varphi^{\overline{4}}, \, e^{-z_1}dz_{13}\wedge \varphi^4\wedge dz_{\overline{23}}\wedge \varphi^{\overline{4}},\\
& \quad \,\, \,e^{\overline{z}_1}dz_{23}\wedge\varphi^4\wedge dz_{\overline{12}}\wedge \varphi^{\overline{4}},\, e^{-\overline{z}_1}dz_{23}\wedge\varphi^4\wedge dz_{13}\wedge\varphi^{\overline{4}}, \, dz_{23}\wedge\varphi^4\wedge dz_{\overline{23}}\wedge \varphi^{\overline{4}} \,\rangle\\
(2,4) & \C\,\langle\,e^{z_1}dz_{12\overline{123}}\wedge\varphi^{\overline{4}}, \, e^{-z_1}dz_{13\overline{123}}\wedge\varphi^{\overline{4}},\, dz_{23\overline{123}}\wedge\varphi^{\overline{4}}\,\rangle \\
\midrule
(4,3) & \C\, \langle\,dz_{123}\wedge\varphi^4\wedge dz_{\overline{123}}, \, e^{\overline{z}_1}dz_{123}\wedge\varphi^4\wedge dz_{\overline{12}}\wedge\varphi^{\overline{4}}, \,e^{-\overline{z}_1}dz_{123}\wedge\varphi^4\wedge dz_{\overline{13}}\wedge\varphi^{\overline{4}},\\
& \quad \,\,\, dz_{23\overline{123}}\wedge\varphi^{\overline{4}}\,\rangle \\
(3,4) & \C\,\langle\,dz_{123\overline{123}}\wedge \varphi^{\overline{4}},\, e^{z_1}dz_{12}\wedge\varphi^4\wedge dz_{\overline{123}}\wedge\varphi^{\overline{4}}, \, e^{-z_1}dz_{13}\wedge\varphi^4\wedge dz_{\overline{123}}\wedge\varphi^{\overline{4}},\\
 & \quad \,\,\,  dz_{23}\wedge\varphi^4\wedge dz_{\overline{123}}\wedge\varphi^{\overline{4}}\,\rangle \\
\midrule
(4,4) & \C\,\langle\, dz_{123}\wedge\varphi^4\wedge dz_{\overline{123}}\wedge\varphi^{\overline{4}}  \, \rangle  \\
\bottomrule
\end{array}
\end{align}

\begin{equation}\label{table:1}
\resizebox{.80\textwidth}{!}{$
\begin{array}{ll}
& \qquad \qquad \qquad \qquad \qquad \qquad \qquad \qquad \qquad \text{Table \ref{table:1}}\\
\toprule
(p,q) & H_{BC}^{p,q}(M_{\pi}) \\
\bottomrule \vspace{-0.3cm}\\
(0,0) & \C\langle1\rangle \\
\midrule
(1,0) &  \C\,\langle\, dz_1\, \rangle \vspace{0.2cm} \\
(0,1) & \C\,\langle\, dz_{\overline{1}}\,\rangle  \\
\midrule
(2,0) & \C\,\langle\, e^{z_1}dz_{12},\,e^{-z_1}dz_{13},\, dz_{23}\,\rangle \vspace{0.2cm}\\
(1,1) & \C\,\langle\, dz_{1\overline{1}},\, e^{z_1}dz_{1\overline{2}},\, e^{-z_1}dz_{1\overline{3}},\,e^{\overline{z}_1}dz_{2\overline{1}},\,dz_{2\overline{3}},\,e^{-\overline{z}_1}dz_{3\overline{1}},\,dz_{3\overline{2}}\,\rangle \vspace{0.2cm}\\
(0,2) & \C\,\langle\, e^{\overline{z}_1}dz_{\overline{12}},\,e^{-\overline{z}_1}dz_{\overline{13}},\, dz_{\overline{23}}\,\rangle\vspace{0.1cm}\\
\midrule
(3,0) & \C\,\langle\, dz_{123},\, e^{z_1}dz_{12}\wedge\varphi^4,\, e^{-z_1}dz_{13}\wedge\varphi^4,dz_{23}\wedge\varphi^4\,\rangle \vspace{0.2cm}\\
(2,1) & \C\,\langle\, e^{z_1}dz_{12\overline{1}},\, e^{\overline{z}_1}dz_{12\overline{1}}, \, e^{2z_1}dz_{12\overline{2}},\, dz_{12\overline{3}},\, e^{-z_1}dz_{13\overline{1}},\, e^{-\overline{z}_1}dz_{13\overline{1}},\, dz_{13\overline{2}},\, e^{-2z_1}dz_{13\overline{3}},\,dz_{23\overline{1}}, \vspace{0.2cm}\\
 & \quad\,\,\, e^{\overline{z}_1}dz_2\wedge\varphi^4\wedge dz_{\overline{1}}, \, dz_2\wedge\varphi^4\wedge dz_{\overline{3}},\, e^{-\overline{z}_1}dz_{3}\wedge\varphi^4\wedge dz_{\overline{1}}, \, dz_3\wedge\varphi^4\wedge dz_{\overline{2}}\,\rangle \vspace{0.2cm}\\
(1,2) & \C\,\langle\, e^{z_1}dz_{1\overline{12}},\, e^{\overline{z}_1}dz_{1\overline{12}},\, e^{-z_1} dz_{1\overline{13}},\, e^{-\overline{z}_1}dz_{1\overline{13}},\, dz_{1\overline{23}}, \, e^{z_1}dz_{1\overline{2}}\wedge\varphi^{\overline{4}}, \, e^{-z_1}dz_{1\overline{3}}\wedge\varphi^{\overline{4}},\, e^{2\overline{z}_1}dz_{2\overline{12}}, \vspace{0.2cm}\\
& \quad\,\, \,\,dz_{2\overline{13}},\, dz_{2\overline{3}}\wedge\varphi^{\overline{4}},\, dz_{3\overline{12}}, \, e^{-2\overline{z}_1}dz_{3\overline{13}}, \, dz_{3\overline{2}}\wedge\varphi^{\overline{4}}\, \rangle\vspace{0.2cm}\\
(0,3) & \C\,\langle\, dz_{\overline{123}},\, e^{\overline{z}_1}dz_{\overline{12}}\wedge\varphi^{\overline{4}},\, e^{-\overline{z}_1}dz_{\overline{13}}\wedge\varphi^{\overline{4}},dz_{\overline{23}}\wedge\varphi^{\overline{4}}\, \rangle\vspace{0.1cm}\\
\midrule
(4,0) & \C\,\langle\, dz_{123}\wedge \varphi^{4}\, \vspace{0.2cm}\rangle\\
(3,1) & \C\,\langle\, dz_{123\overline{1}},\, e^{z_1}dz_{123\overline{2}},\, e^{-z_1}dz_{123\overline{3}}, \, e^{z_1}dz_{12}\wedge\varphi^4\wedge dz_{\overline{1}}, \, e^{\overline{z}_1}dz_{12}\wedge\varphi^4\wedge dz_{\overline{1}}, \, e^{2z_1}dz_{12}\wedge\varphi^4\wedge dz_{\overline{2}}, \vspace{0.2cm}\\
& \quad \,\,\, dz_{12}\wedge\varphi^4\wedge dz_{\overline{3}}, \, e^{-z_1}dz_{13}\wedge\varphi^4\wedge dz_{\overline{1}},\, e^{-\overline{z}_1}dz_{13}\wedge\varphi^4\wedge dz_{\overline{1}}, \, dz_{13}\wedge\varphi^4\wedge dz_{\overline{2}}, \vspace{0.2cm}\\
& \quad \,\,\, e^{-2z_1}dz_{13}\wedge\varphi^4\wedge dz_{\overline{3}}, \, dz_{23}\wedge\varphi^4\wedge dz_{\overline{1}}\, \rangle \vspace{0.2cm} \\
(2,2) & \C\,\langle\, e^{2z_1}dz_{12\overline{12}}, e^{2\overline{z}_1}dz_{12\overline{12}}, \, dz_{12\overline{13}}, \, e^{2z_1}dz_{12\overline{2}}\wedge\varphi^{\overline{4}},\, dz_{12\overline{3}}\wedge\varphi^{\overline{4}}, \, dz_{13\overline{12}}, \, e^{-2z_1}dz_{13\overline{13}}, \,  e^{-2\overline{z}_1}dz_{13\overline{13}}\vspace{0.2cm}\\
& \quad \,\,\, dz_{13\overline{2}}\wedge\varphi^{\overline{4}}, \, e^{-2z_1} dz_{13\overline{3}}\wedge\varphi^{\overline{4}}, \, e^{2\overline{z_1}}dz_{2}\wedge\varphi^4\wedge dz_{\overline{12}}, \, dz_2\wedge\varphi^4\wedge dz_{\overline{13}}, \, dz_{2}\wedge\varphi^4\wedge dz_{\overline{3}}\wedge\varphi^{\overline{4}},\vspace{0.2cm}\\
& \quad \,\,\, dz_3\wedge\varphi^4\wedge dz_{\overline{12}},\, e^{-\overline{z}_1}dz_3\wedge \varphi^4\wedge dz_{\overline{2}}\wedge\varphi^{\overline{4}} \rangle \vspace{0.2cm}\\
(1,3) & \C\,\langle\, dz_{1\overline{123}},\, e^{z_1}dz_{1\overline{12}}\wedge\varphi^{\overline{4}},\, e^{\overline{z}_1}dz_{1\overline{12}}\wedge\varphi^{\overline{4}}, \, e^{-z_1}dz_{1\overline{13}}\wedge\varphi^{\overline{4}}, \, e^{-\overline{z}_1}dz_{1\overline{13}}\wedge\varphi^{\overline{4}}, dz_{1\overline{23}}\wedge\varphi^{\overline{4}}, e^{\overline{z}_1}dz_{2\overline{123}},\vspace{0.2cm}\\
&\quad \,\,\,  e^{2\overline{z}_1}dz_{2\overline{12}}\wedge\varphi^{\overline{4}}, \, dz_{2\overline{13}}\wedge\varphi^{\overline{4}}, \, e^{-\overline{z}_1}dz_{3\overline{123}}, \, dz_{3\overline{12}}\wedge\varphi^{\overline{4}}, \, e^{-2\overline{z}_1}dz_{3\overline{13}}\wedge\varphi^{\overline{4}}\,\rangle\vspace{0.2cm}\\
(0,4) & \C\,\langle dz_{\overline{123}}\wedge\varphi^{\overline{4}}\,\rangle\vspace{0.1cm}\\
\midrule
(4,1) & \C\,\langle\, dz_{123}\wedge\varphi^4\wedge dz_{\overline{1}}, \, e^{z_1}dz_{123}\wedge\varphi^4\wedge dz_{\overline{2}}, \, e^{-z_1}dz_{123}\wedge\varphi^4\wedge dz_{\overline{3}}\, \rangle \vspace{0.2cm}\\
(3,2) & \C\,\langle\, e^{z_1}dz_{123\overline{12}},\, e^{-z_1}dz_{123\overline{13}}, \, e^{z_1}dz_{123\overline{2}}\wedge\varphi^{\overline{4}}, \, e^{-z_1}dz_{123\overline{3}}\wedge\varphi^{\overline{4}}, \,  e^{2z_1}dz_{12}\wedge\varphi^4\wedge dz_{\overline{12}},\vspace{0.2cm}\\
& \quad \,\, \, e^{2\overline{z}_1}dz_{12}\wedge\varphi^4\wedge dz_{\overline{12}},\, dz_{12}\wedge\varphi^4\wedge dz_{\overline{13}}, \, e^{2z_1}dz_{12}\wedge\varphi^4\wedge dz_{\overline{2}}\wedge\varphi^{\overline{4}}, \, dz_{12}\wedge\varphi^4\wedge dz_{\overline{3}}\wedge\varphi^{\overline{4}},\vspace{0.2cm}\\
& \quad \,\, \, dz_{13}\wedge\varphi^4\wedge dz_{\overline{12}}, \, e^{-2z_1}dz_{13}\wedge\varphi^4 \wedge dz_{\overline{13}}, \, e^{-2\overline{z}_1}dz_{13}\wedge\varphi^4 \wedge dz_{\overline{13}}, \, dz_{13}\wedge\varphi^4\wedge dz_{\overline{2}}\wedge \varphi^{\overline{4}}, \vspace{0.2cm}\\
& \quad \,\,\, e^{-2z_1}dz_{12}\wedge\varphi^4\wedge dz_{\overline{3}}\wedge \varphi^{\overline{4}}, \, e^{\overline{z}_1} dz_{23}\wedge\varphi^4\wedge dz_{\overline{12}}, \, e^{-\overline{z}_1}dz_{23}\wedge\varphi^4\wedge dz_{\overline{13}},\, dz_{23}\wedge\varphi^4\wedge dz_{\overline{23}}\,\rangle\vspace{0.2cm}\\
(2,3) & \C\, \langle\, e^{\overline{z}_1}dz_{12\overline{123}},\, e^{2z_1}dz_{12\overline{12}}\wedge\varphi^{\overline{4}},\, e^{2\overline{z}_1}dz_{12\overline{12}}\wedge\varphi^{\overline{4}},  \, dz_{12\overline{13}}\wedge\varphi^{\overline{4}},\,e^{z_1}dz_{12\overline{23}}\wedge\varphi^{\overline{4}},\, e^{-\overline{z}_1}dz_{12\overline{123}},\vspace{0.2cm}\\
&\quad \,\,\, dz_{13\overline{12}}\wedge\varphi^{\overline{4}},\, e^{-2z_1}dz_{13\overline{13}}\wedge\varphi^{\overline{4}},\, e^{-2\overline{z}_1}dz_{13\overline{13}}\wedge\varphi^{\overline{4}},\, e^{-z_1}dz_{13\overline{23}}\wedge\varphi^{\overline{4}},\, dz_{23\overline{23}}\wedge\varphi^{\overline{4}},\vspace{0.2cm}\\
& \quad \,\,\, e^{\overline{z}_1}dz_2\wedge\varphi^4\wedge dz_{\overline{123}},\, e^{2\overline{z}_1}dz_2\wedge\varphi^4\wedge dz_{\overline{12}}\wedge\varphi^{\overline{4}},\, dz_2\wedge\varphi^4\wedge dz_{\overline{13}}\wedge\varphi^{\overline{4}},\, e^{-\overline{z}_1}dz_3\wedge\varphi^4\wedge dz_{\overline{123}},\vspace{0.2cm}\\
& \quad \,\,\, dz_3\wedge\varphi^4\wedge dz_{\overline{12}}\wedge\varphi^{\overline{4}}, \, e^{-2\overline{z}_1}dz_3\wedge\varphi^4\wedge dz_{\overline{13}}\wedge\varphi^{\overline{4}}  \,\rangle\vspace{0.2cm}\\
(1,4) & \C\,\langle\, dz_{1\overline{123}}\wedge\varphi^{\overline{4}},\,   e^{\overline{z}_1}dz_{2\overline{123}}\wedge\varphi^{\overline{4}}, \, e^{-\overline{z}_1}dz_{3\overline{123}}\wedge\varphi^{\overline{4}}\,\rangle\vspace{0.1cm}\\
\midrule
(4,2) & \C\,\langle\, e^{z_1}dz_{123}\wedge\varphi^4\wedge dz_{\overline{12}}, \, e^{\overline{z}_1}dz_{123}\wedge\varphi^4\wedge dz_{\overline{12}}, \, e^{-z_1}dz_{123}\wedge\varphi^4\wedge dz_{\overline{13}}, \, e^{-\overline{z}_1}dz_{123}\wedge\varphi^4\wedge dz_{\overline{13}}\,\rangle \vspace{0.2cm}\\
& \quad \,\,\, dz_{123}\wedge\varphi^4\wedge dz_{\overline{23}}, \, e^{z_1}dz_{123}\wedge\varphi^4\wedge dz_{\overline{2}}\wedge\varphi^{\overline{4}}, \, e^{-z_1}dz_{123}\wedge\varphi^4\wedge dz_{\overline{3}}\wedge\varphi^{\overline{4}}\,\rangle\vspace{0.2cm}\\
(3,3) & \C\,\langle\, e^{z_1}dz_{123\overline{12}}\wedge\varphi^{\overline{4}},\, e^{-z_1}dz_{123\overline{13}}\wedge\varphi^{\overline{4}}, \, dz_{123\overline{234}}, \, e^{\overline{z}_1}dz_{12}\wedge\varphi^4\wedge dz_{\overline{123}}, \, e^{2z_1}dz_{12}\wedge\varphi^4\wedge dz_{\overline{12}}\wedge \varphi^{\overline{4}},\vspace{0.2cm}\\
& \quad \,\,\, e^{2\overline{z}_1}dz_{12}\wedge\varphi^4\wedge dz_{\overline{12}}\wedge \varphi^{\overline{4}}, dz_{12}\wedge\varphi^4\wedge dz_{\overline{13}}\wedge\varphi^{\overline{4}}, e^{z_1}dz_{12}\wedge\varphi^4\wedge dz_{\overline{23}}\wedge \varphi^{\overline{4}}, \, e^{-\overline{z}_1}dz_{13}\wedge\varphi^4\wedge dz_{\overline{123}}, \vspace{0.2cm}\\
& \quad \,\,\, dz_{13}\wedge\varphi^4\wedge dz_{\overline{12}}\wedge \varphi^{\overline{4}}, \, e^{-2z_1}dz_{13}\wedge\varphi^4\wedge dz_{\overline{13}}\wedge\varphi^{\overline{4}}, \, e^{-2\overline{z}_1}dz_{13}\wedge\varphi^4\wedge dz_{\overline{13}}\wedge\varphi^{\overline{4}},\vspace{0.2cm}\\
& \quad \,\,\, e^{-z_1}dz_{13}\wedge \varphi^4\wedge dz_{\overline{23}}\wedge \varphi^{\overline{4}}, \, dz_{23}\wedge\varphi^4\wedge dz_{\overline{123}}, \, e^{\overline{z}_1}dz_{23}\wedge\varphi^4\wedge dz_{\overline{12}}\wedge\varphi^{\overline{4}}, \, e^{-\overline{z}_1}dz_{23}\wedge\varphi^4\wedge dz_{\overline{13}}\wedge\varphi^{\overline{4}},\vspace{0.2cm}\\
& \quad \,\,\, dz_{23}\wedge\varphi^4\wedge dz_{\overline{23}}\wedge\varphi^{\overline{4}}\rangle\vspace{0.2cm}\\
(2,4) & \C\,\langle\, e^{z_1}dz_{12\overline{123}}\wedge \varphi^{\overline{4}},\, e^{\overline{z}_1}dz_{12\overline{123}}\wedge \varphi^{\overline{4}}, \, e^{-z_1}dz_{13\overline{123}}\wedge \varphi^{\overline{4}}, \,  \, e^{-\overline{z}_1}dz_{13\overline{123}}\wedge \varphi^{\overline{4}}, \, dz_{23\overline{123}}\wedge\varphi^{\overline{4}},\vspace{0.2cm}\\
& \quad \,\,\, e^{\overline{z}_1}dz_2\wedge\varphi^4\wedge dz_{123}\wedge\varphi^{\overline{4}}, \, e^{-\overline{z}_1}dz_3\wedge\varphi^4\wedge dz_{123}\wedge\varphi^{\overline{4}} \,\rangle\vspace{0.1cm}\\
\midrule
(4,3) & \C\,\langle\, dz_{123}\wedge\varphi^4\wedge dz_{\overline{123}}, \, e^{z_1}dz_{123}\wedge\varphi^4\wedge dz_{\overline{12}}\wedge\varphi^{\overline{4}}, \, e^{\overline{z}_1}dz_{123}\wedge\varphi^4\wedge dz_{\overline{12}}\wedge\varphi^{\overline{4}},\, e^{-z_1}dz_{123}\wedge\varphi^4\wedge dz_{\overline{13}}\wedge\varphi^{\overline{4}}, \vspace{0.2cm}\\
& \quad \,\,\, e^{-\overline{z}_1}dz_{123}\wedge\varphi^4\wedge dz_{\overline{13}}\wedge\varphi^{\overline{4}}, dz_{123}\wedge\varphi^4\wedge dz_{\overline{23}}\wedge\varphi^{\overline{4}}\,  \,\rangle\vspace{0.2cm}\\
(3,4) & \C\,\langle\, dz_{123\overline{123}}\wedge\varphi^{\overline{4}}, \, e^{z_1}dz_{12}\wedge\varphi^4\wedge dz_{123}\wedge\varphi^{\overline{4}}, \, e^{\overline{z}_1}dz_{12}\wedge\varphi^4\wedge dz_{123}\wedge\varphi^{\overline{4}} \,, e^{-z_1}dz_{13}\wedge\varphi^4\wedge dz_{123}\wedge\varphi^{\overline{4}},  \vspace{0.1cm}\\
& \quad \,\, \, e^{-\overline{z}_1}dz_{13}\wedge\varphi^4\wedge dz_{123}\wedge\varphi^{\overline{4}}, \, dz_{23}\wedge\varphi^4\wedge dz_{123}\wedge\varphi^{\overline{4}} \rangle\vspace{0.1cm}\\
\midrule
(4,4) & \C\,\langle\, dz_{123}\wedge\varphi^4\wedge dz_{\overline{123}}\wedge\varphi^{\overline{4}}\,\rangle \\
\bottomrule
\end{array}$}
\end{equation}

\end{document}